\CompileMatrices\CompilePrefix{xymatrix/diagram} 
\newtheoremstyle{theorem}{8pt\vfill}{8pt\vfill}{\itshape}{}{\bfseries}{.}{.5em}{}
\newtheoremstyle{paragraph}{8pt\vfill}{8pt\vfill}{}{}{\bfseries}{}{.5em}{}
\theoremstyle{theorem}                                       
\newtheorem{thm}{Theorem}[section]
\newtheorem{conj}[thm]{Conjecture}
\newtheorem{prop}[thm]{Proposition}
\newtheorem{cor}[thm]{Corollary}
\newtheorem{lemma}[thm]{Lemma}
\theoremstyle{definition}
\newtheorem{df}[thm]{Definition}
\newtheorem{rem}[thm]{Remark}
\theoremstyle{paragraph}                                     
\newtheorem{pg}[thm]{\!\!}
\newcommand{\norm}[1]{\left| #1 \right|}
\newcommand{\lrquot}[3]{#1 \,\backslash\, #2 \,/\, #3 }
\newcommand{\lquot}[2]{#1  \,\backslash\, #2 }
\newcommand{\rquot}[2]{#1  \, / \, #2 }
\def \mat #1 #2 #3 #4 { \begin{pmatrix} {#1} & #2 \\ #3 & #4 \end{pmatrix} }
\def \smallmat #1 #2 #3 #4 {{\scriptstyle \begin{pmatrix} {#1} & {#2} \\ {#3} & {#4} \end{pmatrix}}}
\def \tinymat #1 #2 #3 #4 {\bigl( \begin{smallmatrix} {#1} & {#2} \\ {#3} & {#4} \end{smallmatrix} \bigr)}
\def\eqref #1{\textup{(}\ref{#1}\textup{)}}
\def\cprime{$'$}
\def\blanc{\mspace{4mu}\cdot\mspace{4mu}}
\def\cO{\mathcal{O}}
\def\cP{\mathcal{P}}
\def\cA{\mathcal{A}}
\def\cH{\mathcal{H}}
\def\cE{\mathcal{E}}
\def\cR{\mathcal{R}}
\def\ZZ{\mathbb{Z}}
\def\QQ{\mathbb{Q}}
\def\RR{\mathbb{R}}
\def\CC{\mathbb{C}}
\def\FF{\mathbb{F}}
\def\AA{\mathbb{A}}
\def\PP{\mathbb{P}}
\def\SS{\mathbb{S}}
\def\HH{\mathbb{H}}
\def\fc{\mathfrak{c}}
\def\fL{\mathfrak{L}}
\def\N{\mathrm{N}}
\def\Z{\mathrm{Z}}
\def\varE{\tilde E}
\def\varcE{\widetilde\cE}
\DeclareMathOperator{\Hom}{Hom}
\DeclareMathOperator{\Mat}{Mat}
\DeclareMathOperator{\im}{im}
\DeclareMathOperator{\Gal}{Gal}
\DeclareMathOperator{\Res}{Res}
\DeclareMathOperator{\ch}{char}
\DeclareMathOperator{\GL}{GL}
\DeclareMathOperator{\PSL}{PSL}
\DeclareMathOperator{\ord}{ord}
\DeclareMathOperator{\vol}{vol}
\DeclareMathOperator{\pr}{pr}
\def\tor{{\rm tor}}
\def\nr{{\rm nr}}
\def\Re{\mathop{\rm Re}\nolimits}
\def\Im{\mathop{\rm Im}\nolimits}
\def\i{{\rm i}}
\renewcommand{\date}[1]{\gdef\DD{#1}}\newcommand{\DD}{}
\title{Toroidal automorphic forms for function fields}
\author{Oliver Lorscheid}
\address{The City College of New York, Math. Dept., 160 Convent Ave., New York NY 10031, USA}
\email{olorscheid@ccny.cuny.edu}
\begin{document}

\begin{abstract}
 The space of toroidal automorphic forms was introduced by Zagier in 1979. Let $F$ be a global field. An automorphic form on $\GL(2)$ is toroidal if it has vanishing constant Fourier coefficients along all embedded non-split tori. The interest in this space stems from the fact (amongst others) that an Eisenstein series of weight $s$ is toroidal if $s$ is a non-trivial zero of the zeta function, and thus a connection with the Riemann hypothesis is established.

 In this paper, we concentrate on the function field case. We show the following results. The $(n-1)$-th derivative of a non-trivial Eisenstein series of weight $s$ and Hecke character $\chi$ is toroidal if and only if $L(\chi,s+1/2)$ vanishes in $s$ to order at least $n$ (for the ``only if''-part we assume that the characteristic of $F$ is odd). There are no non-trivial toroidal residues of Eisenstein series. The dimension of the space of derivatives of unramified Eisenstein series equals $h(g-1)+1$ if the characterisitc is not $2$; in characteristic $2$, the dimension is bounded from below by this number. Here $g$ is the genus and $h$ is the class number of $F$. The space of toroidal automorphic forms is an admissible representation and every irreducible subquotient is tempered. 

\end{abstract}


\maketitle

\tableofcontents


\section*{Introduction}

\noindent
At the Bombay Colloquium in January 1979, Don Zagier (\cite{Zagier1}) observed that if the kernel of certain operators on automorphic forms turns out to be a unitarizable representation, a formula of Hecke implies the Riemann hypothesis. 

We review this idea in classical language. Let $\Gamma=\PSL_2\ZZ$ and $\Gamma_\infty=\{\pm\tinymat 1 n 0 1 | n\in\ZZ\}$. These groups both act on the Poincar\'e upper half plane $\HH=\{z\in\CC|\Im z>0\}$ by M\"obius transformations. For $z\in\HH$ and $s\in\CC$, let
$$ E(z,s) \quad = \quad \pi^{-s}\Gamma(s)\zeta(2s)\sum_{\gamma\in\lquot{\Gamma_\infty}{\Gamma}} \Im(\gamma z)^s $$
be the complete Eisenstein series where $\zeta(s)=\sum_{n\geq1}n^{-s}$ is the Riemann zeta function. We note that all formulas only make sense for $\Re s>1$, but admit a meromorphic continuation to all $s\in\CC$. Let $E=\QQ[\sqrt D]$ be an imaginary quadratic number field of discrimant $D<0$. To each positive binary quadratic form $Q(m,n)=am^2+bmn+cn^2$ of discriminant $b^2-4ac=D$, we associate the root $z_Q=\frac{-b+\sqrt D}{2a}\in\HH$. Let $\{z_1,\dotsc,z_r\}$ be the inequivalent points in $\HH$ $(\textup{mod }\Gamma)$ that are of the form $z_Q$ for a positive binary quadratic form $Q$ of discriminant $D$. The following formula is  due to Hecke (\cite[p.\ 201]{Hecke}):
$$ \sum_{i=1}^r E(z_i,s) \quad = \quad \frac w2\ \norm D^{s/2}\ (2\pi)^{-s}\ \Gamma(s)\ \zeta_E(s) $$
where $w$ is the number of the roots of unity contained in $E$ and $\zeta_E(s)$ is the Dedekind zeta function of $E$. Since $\zeta_E(s)=\zeta(s)L(\chi_E,s)$ where $\chi_E$ is the quadratic Hecke character associated to $E$ by class field theory, we observe that $\sum_{i=1}^r E(z_i,s)$ vanishes if $s$ is a zero of $\zeta(s)$; more generally, $\sum_{i=1}^r \frac{d^{(n-1)}}{ds^{(n-1)}}E(z_i,s)=0$ if $s$ is a zero of $\zeta(s)$ of order at least $n$. 

Zagier's observation in \cite{Zagier1} is that an Eisenstein series $E(z,s)$ lies in a unitarizable automorphic representation of $\PSL_2\RR$ if and only if $s\in(0,1)$ or $\Re s=1/2$. Proper derivatives $\frac{d^{(i)}}{ds^{(i)}}E(z,s)$ of Eisenstein series do not lie in a unitarizable representation. Note that $\zeta(s)$ has no zero in $(0,1)$ (cf.\ \cite[Formula (2.12.4)]{Titchmarsh}). Thus if the space of all $\Gamma$-invariant automorphic forms $f$ on $\HH$ that satisfy $\sum_{i=1}^rf(z_i)=0$ is a unitarizable automorphic representation, then the Riemann hypothesis follows and $\zeta(s)$ has no multiple zeros.

The adelic analogue of Hecke's formula can be formulated as follows. Let $G=\GL(2)$ and $Z$ its center. Let $\AA$ be the adeles of $\QQ$. A choice of a $\QQ$-basis for $E$ defines a non-split torus $T(F)=\im(E^\times\hookrightarrow G(\QQ))$ of $G$. Note that there is a projection $G(\AA)\to\lquot{\Gamma}{\HH}$ such that $T(\AA)$ maps precisely to the points $\{z_1,\dotsc,z_r\}$. Let $E(g,s)$ be the spherical unramified Eisenstein series of weight $s$ (note that the weight in the adelic language is usually shifted by $1/2$ compared to classical Eisenstein series). Then the adelic version of Hecke's formula reads as
$$ \int_{\lquot{T(F)Z(\AA)}{T(\AA)}} E(tg,\varphi,s)\;dt \quad = \quad c(g,\varphi,s)\ \Gamma(s+1/2)\ \zeta_E(s+1/2) $$
for some factor $c(g,\varphi,s)$ which is holomorphic in $s$. 

This formula generalises to all global fields $F$ and all quadratic field extensions $E$ of $F$. We are concerned with the case that $F$ is a function field in this paper. Following Zagier, we define the \emph{space $\cA_\tor(E)$ of $E$-toroidal automorphic forms} as the space of automorphic forms $f$ for which the toroidal integrals
$$\int_{\lquot{T(F)Z(\AA)}{T(\AA)}} f(tg)dt $$ 
vanish for all $g\in G(\AA)$. The \emph{space of toroidal automorphic forms}\footnote{This definition differs slightly from the definition in the main text since we will also make sense of toroidal integrals with respect to split tori, which will contribute to the definition of $\cA_\tor$. A posteriori, however, it will follow for odd characteristic that these two definitions coincide (cf.\ Remark \ref{rem_omit_E}).} is $\cA_\tor=\bigcap\cA_\tor(E)$ where $E$ varies through all separable field extensions of $F$. Zagier raises questions at the end of \cite{Zagier1} for the function field case: Is the spectrum of $\cA_\tor$ discrete? What is the dimension of the space of unramified automorphic forms in $\cA_\tor$? In particular, is it finite-dimensional? 

In this paper we give answers to these questions (and more). In particular, we prove the following statements.
\begin{enumerate}
 \item Let the characteristic $p$ be odd. Then the $(n-1)$-st derivatives of all Eisenstein series of weight $s$ and Hecke character $\chi$ are toroidal if and only if $L(\chi,s+1/2)$ vanishes in $s$ to order at least $n$ (Theorem \ref{toroidal_if_and_only_L-series=0}). The ``if''-part holds also for $p=2$ (Theorem \ref{lemma_zagier_for_derivatives}).
 \item There are no non-trivial toroidal residues of Eisenstein series (Theroem \ref{no_toroidal_residues}).
 \item The space $\cA_\tor$ is an admissible automorphic representation (Theorem \ref{thm_admissible}). In particular, the spectrum of $\cA_\tor$ is discrete and the space of unramified $E$-toroidal automorphic forms is finite-dimensional.
 \item Let $g$ be the genus of $F$ and $h$ the class number. If $p$ is odd, then the dimension of the space of unramified toroidal derivatives of Eisenstein series is $h(g-1)+1$. If $p=2$, then $h(g-1)+1$ is a lower bound for the dimension (Theorem \ref{thm_dimension}).
 \item Every irreducible subquotient of $\cA_\tor$ is a tempered automorphic representation (Theorem \ref{thm_tempered}). In particular, every irreducible subquotient of $\cA_\tor$ is unitarizable.
\end{enumerate}

We briefly review the developments after the appearance of Zagier's paper \cite{Zagier1}. The work of Waldspurger on the Shimura correspondence (\cite{Waldspurger4}, \cite{Waldspurger1}, \cite{Waldspurger2} and \cite{Waldspurger3}) includes a formula connecting toroidal integrals of cusp forms (nowadays also called Waldspurger periods) with the value of the $L$-function of the corresponding cuspidal representation at $1/2$. In \cite{Wielonsky2} Franck Wielonsky worked out Zagier's ideas and obtained a generalisation to a limited class of Eisenstein series on $\GL_n(\AA)$. Lachaud tied up the spaces with Connes' view on the zeta function, cf.\ \cite{Lachaud1} and \cite{Lachaud2}. Clozel and Ullmo (\cite{Clozel-Ullmo}) used both Waldspurger's and Zagier's works to prove a equidistribution result for tori in $\GL_2$, and Lysenko (\cite{Lysenko}) translated certain Waldspurger periods into geometric language. In a joint work with Cornelissen (\cite{Cornelissen-Lorscheid}), we calculated the space of unramified toroidal automorphic forms for global function fields with a rational point of genus $g\leq1$ and class number $1$. In another joint paper with Cornelissen (\cite{Cornelissen-Lorscheid2}), we describe the space of toroidal automorphic forms in the number field case.

The paper is divided in four parts, each of which contains different sections. In Part \ref{part_notation}, we give notations and definitions. In Section \ref{section_automorphic}, we introduce automorphic forms for $\GL(2)$, the Hecke algebra and cusp forms. In Section \ref{section_toroidal}, we define $E$-toroidal automorphic forms for every separable quadratic algebra extension $E$ of $F$. In particular, we include a definition for the split torus, which corresponds to the algebra $E=F\oplus F$.

In Part \ref{part_eisenstein}, we draw conclusions about the space of toroidal Eisenstein series by various methods. In Section \ref{section_review}, we review the definitions of and some results about $L$-series and Eisenstein series. In Section \ref{section_non-split}, we review the adelic version of Hecke's formula in detail and give, in particular, non-vanishing results for the factor appearing in the equation. This yields a precise description of the space of $E$-toroidal Eisenstein series. In Section \ref{section_split}, we establish the corresponding formula for split tori, which is analogous to the case of the non-split torus, though, proven differently. In Section \ref{section_derivatives}, we draw conclusions about toroidality of derivatives of Eisenstein series. In Section \ref{section_residues}, we determine which residues of Eisenstein series are $E$-toroidal and show that there are no non-trivial toroidal residues of Eisenstein series. In Section \ref{section_non-vanishing} we employ Double Dirichlet series to show that in odd characteristic, the quadratic twists $L(\chi\chi_E,s)$ do not vanish simultaneously for a given Hecke character $\chi$ and a given $s\in\CC$ when $\chi_E$ varies through all non-trivial quadratic Hecke characters. This yields a precise description of toroidal derivatives of Eisenstein series.

In Part \ref{part_representations}, we consider the properties of $\cA_\tor$ as a representation. In Section \ref{section_riemann}, we formulate the implication of temperedness on the Riemann hypothesis for $F$. In particular, we formulate a sufficient condition on the eigenvalue of a single Hecke operator on unramified toroidal Hecke-eigenforms, which can be verified in examples (see \cite{Cornelissen-Lorscheid} and \cite{Lorscheid-thesis}). In Section \ref{section_tempered}, we show that every irreducible subquotient of $\cA_\tor$ is a tempered automorphic representation and that $\cA_\tor$ is admissible.

In Part \ref{part_dimension}, we establish a dimension formula for the space of derivatives of unramified Eisenstein series. In Section \ref{section_basis}, we establish a basis for the space of unramified automorphic forms, which consints in generalised Hecke eigenforms. In particular, we investigate all linear dependencies between derivatives of (residual) Eisenstein series. In Section \ref{section_dimension}, we use this basis to show that the dimension of the space of derivatives of unramified Eisenstein series equals $h(g-1)+1$ (resp.\ is bounded by, in characteristic $2$) where $g$ is the genus and $h$ is the class number of $F$.

\medskip
\noindent{\bf Acknowledgements: }
This paper as well as \cite{Lorscheid3} and \cite{Lorscheid2} are extracted from my thesis \cite{Lorscheid-thesis} (except for Section \ref{section_non-vanishing}, which follows \cite{Cornelissen-Lorscheid2}). First of all, I would like to thank my thesis advisor Gunther Cornelissen who guided and helped me in in my studies on toroidal automorphic forms. I would like to thank Don Zagier and G\"unter Harder for explaining to me their ideas on the topic. I would like to thank Roelof Bruggeman and Frits Beukers for their comments on many lectures and drafts that formed the blueprint of my thesis. I would like to thank Gerard Laumon, Laurent Clozel and Jean-Loup Waldspurger for their hospitality and mathematical help during a fruitful month in Paris.

\bigskip

\part{Notations and Definitions}
\label{part_notation}

\noindent
This first part introduces the notation used throughout the paper and sets up the definition of the space of toroidal automorphic forms.


\section{Automorphic forms for $\GL(2)$}
\label{section_automorphic}

\begin{pg}
 \label{def_K}
 Let $q$ be a prime power and $F$ be a global function field with constants $\FF_q$. Let $X$ be the set of all places of $F$. We denote by $F_x$ the completion of $F$ at $x\in X$ and by $\cO_x$ the integers of $F_x$. We choose a uniformizer $\pi_x\in F$ for every place $x$. Let $\deg x$ be the degree of $x$ and let $q_x=q^{\deg x}$ be the cardinality of the residue field of $\cO_x$. We denote by $\norm \ _x$ the absolute value on $F_x$ resp.\ $F$ such that $\norm{\pi_x}_x=q_x^{-1}$.

 Let $\AA$ be the adele ring of $F$ and $\AA^\times$ the idele group. Put $\cO_\AA=\prod\cO_x$ where the product is taken over all places $x$ of $F$. Let $g$ be the genus of $F$. Let $\fc$ be a differental idele, i.e.\ a representative of the canonical divisor in the divisor class group $\AA^\times/F^\times\cO_\AA^\times$, which is of degree $2g-2$. The idele norm is the quasi-character $\norm\ :\AA^\times\to\CC^\times$ that sends an idele $(a_x)\in\AA^\times$ to the product $\prod\norm{a_x}_x$ over all local norms. By the product formula, this defines a quasi-character on the idele class group $\rquot{\AA^\times}{F^\times}$.

 Let $\Xi$ be the group of all quasi-characters on the the idele class group, i.e.\ the group of all continuous group homomorphisms $\chi:\rquot{\AA^\times}{F^\times}\to\CC$. Let $\Xi_0$ be the subgroup of unramified quasi-characters, i.e.\ the group of those quasi-characters that are $\cO_\AA$-invariant. Note that every quasi-character $\chi$ is of the form $\chi=\chi_0\norm\ ^s$ for a complex number $s$ modulo $2\pi\i/\ln q$ and a character $\chi_0$ of finite order; in particular $\im\chi_0\subset\SS^1$. Though there are different choices for such a decomposition into a finite character and a principal quasi-character, the real part of $s$ is independent of the decomposition and we define $\Re\chi$ as the real part of $s$.

 Let $G=\GL(2)$ be the algebraic group of invertible $2\times 2$-matrices and let $Z$ be the center of $G$. Following the habit of literature about automorphic forms, we will often write $G_\AA$ instead of $G(\AA)$ for the group of adelic points and $Z_F$ instead of $Z(F)$ for the group of $F$-valued points, et cetera. Let $K_x=G(\cO_x)$ and $K=\prod K_x= G(\cO_\AA)$, which is a maximal compact subgroup of $G_\AA$. The adelic topology turns $G_\AA$ into a locally compact topological group with maximal compact subgroup $K$.
\end{pg}

\begin{pg}
 Let $\cH$ be the \emph{Hecke algebra for $G_\AA$}, which is the vector space of all compactly supported locally constant functions $\Phi: G_\AA\to\CC$ together with the convolution product
 $$ \Phi_1 \ast \Phi_2: g \mapsto \int\limits_{G_\AA} \Phi_1(gh^{-1})\Phi_2(h) \,dh\,, $$
 A Hecke operator $\Phi\in\cH$ acts on the space $C^0(G_\AA)$ of continuous functions $f:G_\AA\to\CC$ by the formula
 $$ \Phi.f:g\to\int\limits_{G_\AA}\Phi(h)f(gh)\,dh. $$

 A function $f\in C^0(G_\AA)$ is called \emph{smooth} if it is locally constant and it is called \emph{$K$-finite} if the set of all right $K$-translates $f_k:g\to f(gk)$ generates a finite-dimensional subspace of $C^0(G_\AA)$.

 An \emph{automorphic form on $G_\AA$ (with trivial central character)} is a smooth, $K$-finite and left $G_FZ_\AA$-invariant function $f:G_\AA\to\CC$ such that for every Hecke operator $\Phi\in\cH$, the functions $\Phi^i.f$ for $i\geq 0$ generate a finite-dimensional subspace of $C^0(G_\AA)$. We denote the space of all automorphic forms by $\cA$. The group $G_\AA$ acts on $\cA$ via the \emph{right-regular representation}, i.e.\ $g.f:h\to f(hg)$. The action of $\cH$ restricts to the subspace $\cA$ of $C^0(G_\AA)$.

 There is a one-to-one correspondence between $G_\AA$-modules and $\cH$-modules. In particular, a subspace of $\cA$ is a $G_\AA$-module if and only if it is an $\cH$-module. In the following, we will often speak of subrepresentations of $\cA$ without specifying $G_\AA$ or $\cH$ explicitly.


\end{pg}

\begin{pg}
 \label{cusp_forms}
 Let $B$ be a Borel subgroup of $G$ and $N\subset B$ its unipotent radical. The \emph{constant term $f_N$ (with respect to $N$)} of an automorphic form $f\in\cA$ is the function $f_N:G_\AA\to \CC$ defined by
 $$ f_N(g) \ := \ \vol(\lquot{N_F}{N_\AA})^{-1}\int\limits_{\lquot{N_F}{N_\AA}} f(ng)\,dn. $$
 The constant term $f_N$  is left $B_FZ_\AA$-invariant. If $f_N(g)$ vanishes for all $g\in G_\AA$, the automorphic form $f$ is called a \emph{cusp form}. We denote the space of all cusp forms by $\cA_0$. If $e\in G_\AA$ denotes the identity matrix, then we have the alternative descripription
 $$ \cA_0 = \{f\in\cA\mid\forall\Phi\in\cH, \Phi(f)_N(e)=0\}. $$

 \label{approx_by_const_terms}
 The \emph{approximation by constant terms} (\cite[I.2.9]{Moeglin-Waldspurger}) states that for every $f\in\cA$, the function $f-f_N$ has compact support as a function on $\lquot{B_FZ_\AA}{G_\AA}$.
\end{pg}


\section{Toroidal automorphic forms}
\label{section_toroidal}

\begin{pg}
 \label{anisotropic_torus_is_compact}\label{bijection_quadr_ext_conj_cl_of_tori} 
 Let $T$ be a maximal torus of $G$. Then either $T$ is \emph{split}, i.e.\ $T(F)$ is isomorphic to the units of $F\oplus F$, or $T$ is \emph{non-split}, i.e.\ $T(F)$ is isomorphic to the units of a separable quadratic field extension $E$ of $F$. This establishes a bijection between the conjugacy classes of maximal tori in $G$ and the isomorphism classes of separable quadratic algebra extensions of $F$.

 If $T$ is an split torus, then $\lquot{T_FZ_\AA}{T_\AA}\simeq\lquot{F^\times}{\AA^\times}$. If $T$ is a non-split torus, i.e.\ $T(F)\simeq E^\times$ for a separable quadratic field extension $E/F$, then $\lquot{T_FZ_\AA}{T_\AA}\simeq \lquot{E^\times\AA^\times}{\AA_E^\times}$ is a compact abelian group. In this case, $T$ splits over $E$.
\end{pg}

\begin{pg}
 \label{def_toroidal_integral}
 Let $T$ be a non-split torus, and endow $Z_\AA$ and $T_\AA$ with Haar measures such that $\Z_\AA\simeq\AA_F^\times$ and
 $T_\AA\simeq\AA_E^\times$ as measure spaces. 
 Endow $T_F$ with the discrete measure. This defines a Haar measure on $\lquot{T_FZ_\AA}{T_\AA}$ as quotient measure. We call
 $$ f_T(g) \ := \ \int \limits_{\lquot{T_FZ_\AA}{T_\AA}} f(tg)\,dt $$
 the {\it toroidal integral of $T$ (evaluated at $g$)}. Since the domain is compact, the integral converges for all $f \in \cA$ and $g\in G_\AA$.

 If $T$ is a split torus, then endow $T_\AA\simeq\AA^\times\oplus\AA^\times$ with the product measure of $\AA^\times$. Further let $Z_\AA$ carry the same measure as before and let $T_F$ carry the discrete measure. This defines a quotient measure on $\lquot{T_FZ_\AA}{T_\AA}$. Let $B$ and $B'$ be the two Borel subgroups that contain $T$, and let $N$ and $N^T$, respectively, be their unipotent radicals. Note that $\lquot{T_FZ_\AA}{T_\AA}$ is not compact, but due to approximation by constant terms (cf.\ paragraph \ref{cusp_forms}) both $f-f_N$ and $f-f_{N^T}$ have compact support as functions on $\lquot{B_FZ_\AA}{G_\AA}$ and $\lquot{B'_FZ_\AA}{G_\AA}$, respectively.  The {\it toroidal integral of $T$ (evaluated in $g$)} is
 $$ f_T(g) \ := \ \int \limits_{\lquot{T_FZ_\AA}{T_\AA}} \Bigl(f-\frac 12(f_N+f_{N^T})\Bigr)\,(tg)\,dt\;,$$
 which converges for all $f \in \cA$ and any choice of Haar measure on $\lquot{T_FZ_\AA}{T_\AA}$.
\end{pg}

\begin{df}
 \label{def_toroidal_forms}
 Let $T$ be a maximal torus of $G$ corresponding to a separable quadratic algebra extension $E/F$. Then define
 $$ \cA_\tor(E) \ = \ \{ f\in\cA\mid\forall g\in G_\AA, \; f_T(g)=0\} \;, $$
 the space of {\it $E$-toroidal automorphic forms}, and
 $$ \cA_\tor \ = \ \bigcap_{\begin{subarray}{c}\text{separable quadratic}\\\text{algebra extensions }E/F\end{subarray}} \cA_\tor(E) \;, $$
 the space of {\it toroidal automorphic forms}.
\end{df}

\begin{pg} 
 \label{tor_def_well-defined}
 The spaces $\cA_\tor(E)$ do not depend on the choice of torus in the conjugacy class corresponding to $E$. Indeed, for a conjugate $T_\gamma=\gamma^{-1} T\gamma$ with $\gamma\in G_F$, we have $f_{T_\gamma}(g)=f_T(g_\gamma)$, where $g_\gamma=\gamma g$. Note that the definition is also independent of the choices of Haar measures.

 \label{toroidal_form_def_by_H}
 As in the case of cusp forms, the correspondence between $G(\AA)$- and $\cH$-modules yields the following alternative descriptions. For all $T$ and $E$ as above,
 \begin{eqnarray*} \cA_\tor(E) & = & \{f\in\cA\mid\forall\Phi\in\cH, \Phi(f)_T(e)=0\} \qquad \text{and} \\
  \cA_\tor & = & \{ f\in\cA\mid\forall \text{ maximal tori }T\subset G,\,\forall \Phi\in\cH, \; \Phi(f)_T(e)=0\} \;. \end{eqnarray*}
\end{pg}

\bigskip

\part{Toroidal Eisenstein series}
\label{part_eisenstein}

\noindent
In this part of the paper we review Zagier's translation of Hecke's formula, which connects a sum of Eisenstein series to an $L$-series, into adelic language. Additionally we show non-vanishing results for the factors occuring in the formula. The case of split tori was not treated in adelic language yet and the proof is somewhat different to the case of the non-split torus; the result, however, is analogue to the non-split case. We begin with overviews of $L$-series and Eisenstein series to provide the reader with the results used in the latter.


\section{Review of $L$-series and Eisenstein series}
\label{section_review}

\begin{pg} 
 \label{cont_L_series}
 We give the necessary background on $L$-series that is used throughout the paper. For $\chi\in\Xi$ and $S=\{x\in X\mid\exists a_x\in\cO_x^\times,\;\chi(a_x)\neq1\}$, let
 $$ L_F(\chi,s)=\prod_{x\in X-S}\frac{1}{1-\chi(\pi_x)\norm{\pi_x}^s} $$
 be the \emph{$L$-series of $\chi$ in $s$}. If no confusion arises, we omit the subscript $F$ and write $L(\chi,s)$. This series converges for $\Re s>1-\Re\chi$ and it has a meromorphic continuation to all $s\in\CC$. It has poles in those $s$ for which $\chi\norm \ ^s$ equals $\norm\ ^0$ or $\norm \ ^1$ (i.e.\ $L(\chi,s)$ has poles only if $\chi$ has to be a principal character), and these poles are of order $1$. It satisfies a functional equation
 $$ L(\chi,1/2+s) = \epsilon(\chi,s) L(\chi^{-1},1/2-s) $$
 for a certain non-zero factor $\epsilon(\chi,s)$. If $\chi\in\Xi_0$, then $\epsilon(\chi,s)=\chi(\fc)\norm \fc^s$ where $\fc$ is a differental idele.
\end{pg}

\begin{pg}
 \label{Tate_integral}
 \label{Euler_prod_Tate_integral}
 We review the result known as Tate's thesis (cf.\ \cite[VII, Thm.\ 2 and \S\S\,6-7]{Weil}). Let $\psi:\AA\to\CC$ be a Schwartz-Bruhat function, i.e.\ a locally constant function with compact support. Choose a Haar measure on $\AA^\times$ and define the \emph{Tate integral}
 $$ L(\psi,\chi,s)=\int\limits_{\AA^\times}\psi(a)\chi(a)\norm a^s\,d\!a \;, $$
 which converges for $\Re s>1-\Re\chi$. For every Schwartz-Bruhat function $\psi$ and for every $\chi\in\Xi$, $L(\psi,\chi,s)$ is a holomorphic multiple of $L(\chi,s)$ as function of $s\in\CC$. For every $\chi\in\Xi$, there is a Schwartz-Bruhat function $\psi$ such that $L(\psi,\chi,s)=L(\chi,s)$. In particular if $\chi$ is unramified, then $L(\psi_0,\chi,s)=L(\chi,s)$ for
 $$ \psi_0 \ = \ h\,(q-1)^{-1}\,(\vol\cO_\AA)^{-1}\,\ch_{\cO_\AA} \;. $$
\end{pg}



\begin{pg}
\label{class_field_theory}
 We collect some statements from class field theory. Let $E/F$ be a finite Galois extension and $\N_{E/F}:\AA_E\to\AA_F$ the norm map. The reciprocity homomorphism $r_{E/F}:\Gal(E/F)\to\lquot{F^\times\N_{E/F}(\AA_E^\times)}{\AA_F^\times}$ induces an isomorphism
 $$ r^\ast_{E/F}: \ \Hom(\,\lquot{F^\times\N_{E/F}(\AA_E^\times)}{\AA_F^\times},\ \SS^1\,)\ \longrightarrow\ \Hom(\,\Gal(E/F),\ \SS^1\,) \;. $$
 If $\omega$ is a character of $\Gal(E/F)$, then denote by $\tilde\omega$ the corresponding character of $\AA_F^\times$ that is trivial on $F^\times$ and $\N_{E/F}(\AA_E^\times)$. In particular, since $E/F$ is unramified if and only if $\cO_\AA^\times\subset\N_{E/F}(\AA_E^\times)$, we see that $\tilde\omega$ is unramified if $E/F$ unramified is so.

 \label{product_of_L_series}
 Let $E/F$ be a finite abelian Galois extension and $\chi\in\Xi$. Then
 $$ L_E(\chi\circ\N_{E/F},s) \ = \ \prod_{\omega\in\Hom(\Gal(E/F),\SS^1)} L_F(\chi\tilde\omega,s) $$
 as meromorphic functions of $s$.

 \label{product_of_all_L_series}
 Let $\chi\in\Xi$ be of finite order $n$. Then there is an abelian Galois extension $E/F$ of order $n$ such that $\chi(\N_{E/F}(\AA_E^\times))=1$, and
 $$ \prod_{\omega\in\Hom(\Gal(E/F),\SS^1)} L_F(\chi\circ\tilde\omega,s) = \zeta_E(s) $$
 as meromorphic functions of $s$. If $\chi$ is an unramified character, then $E/F$ is an unramified field extension.

 \label{L_series_has_no_zero_in_0,1}
 Since zeta functions have simple poles in $0$ and $1$, the $L$-series occuring in this product cannot have zeros at $0$ and $1$ if $\chi$ is ramified. This means that if $\chi\in\Xi$ is of finite order and not of the form $\norm \ ^s$ for some $s\in\CC$, then $L(\chi,0)\neq0$ and $L(\chi,1)\neq0$.
\end{pg}



\begin{pg}
 \label{principal_series_representation}
 We introduce principal series representations and Eisenstein series and review some well-known statements. For reference, cf.\ \cite{Bump}, \cite{Gelbart1} and \cite{Li}.

 Let $B$ be the standard Borel subgroup of upper triangular matrices and $\chi\in\Xi$. The \emph{principal series representation $\cP(\chi)$} is the space of all smooth and $K$-finite functions $f\in C^0(G_\AA)$ that satisfy for all $\tinymat a b {} d \in B_\AA$ and all $g\in G_\AA$ the equation
 $$ f\Bigl(\tinymat a b {} d g \Bigr) \ = \ \norm{\frac ad}^{1/2}\chi\Bigl(\frac ad\Bigr)\,f(g). $$
 The right-regular representation of $G_\AA$ restricts from $C^0(G_\AA)$ to $\cP(\chi)$, or, in other words, $\cP(\chi)$ is a subrepresentation of $C^0(G_\AA)$. 
 \label{principal_series_is_irreducibel} \label{isomorphic_principal_series}
 We have $\cP(\chi')\simeq\cP(\chi)$ as representations if and only if either $\chi'=\chi$ or $\chi'=\chi^{-1}$ and $\chi^2\neq\norm \ ^{\pm1}$. The principal series representation $\cP(\chi)$ is irreducible unless $\chi^2=\norm \ ^{\pm 1}$.

 \label{admissibility_of_principal_series}
 A {\it flat section} is a map $f_{\chi}:\CC\to C^0(G_\AA)$ that assigns to each $s\in\CC$ an element $f_\chi(s)\in\cP(\chi\norm \ ^s)$ such that $f_\chi(s)\vert_K$ is independent of $s$. For every $f\in\cP(\chi)$, there exists an unique flat section $f_\chi$ such that $f=f_\chi(0)$. We say $f$ is \emph{embedded in the flat section $f_\chi$}. Note that $f\in\cP(\chi)$ is uniquely determined by its restriction to $K$. 
\end{pg}

\begin{pg}
 \label{def_unram_Eisenstein}\label{continuation_eisenstein}\label{embedding_principal_series_to_automorphic_forms}
 For the remainder of this section, fix $\chi\in\Xi$, $f\in\cP(\chi)$, and $g\in G_\AA$. Since $\chi$ is trivial on $F^\times$, $f\in\cP(\chi)$ is left $B_F$-invariant, and we define
 $$ E(g,f) \ := \ L(\chi^2,1)\ \cdot\hspace{-10pt} \sum_{\gamma \in \lquot{B_F}{G_F}}\hspace{-8pt} f(\gamma g) \;, $$
 If $f$ is embedded in the flat section $f_\chi$, then put 
 $$ E(g,f,s)=E(g,f_\chi(s)), $$ 
 an expression that converges for every $g\in G_\AA$ and $\Re s > 1/2-\Re\chi$. In the domain of cenvergence, $E(g,f,s)$ is analytic as a function of $s$ and has a meromorphic continuation to all $s\in\CC$. It has simple poles in those $s$ for which $\chi^2\norm \ ^{2s}=\norm \ ^{\pm 1}$. The meromorphic continuation $E(\blanc,f)=E(\blanc,f,0)$ in $s=0$ is called the \emph{Eisenstein series associated to $f$}. As a function in the first argument, $E(\blanc,f)$ is an automorphic form, and we have a morphism
 $$ \begin{array}{ccc} \cP(\chi) & \longrightarrow & \cA \\ f    & \longmapsto     & E(\blanc,f) \end{array} $$
 of $\cH$-modules. 

 If $\chi\in\Xi_0$ and $\chi^2\neq\norm \ ^{\pm1}$, then $\cP(\chi)^K$ is $1$-dimensional and contains thus a unique {\it spherical vector}, i.e.\ an $f^0$ such that $f^0(k)=1$ for all $k\in K$. We put $E(g,\chi,s)=E(g,f^0,s)$ and define the \emph{Eisenstein series associated to $\chi$} as $E(g,\chi)=E(g,\chi,0)$.
\end{pg}

\begin{pg}
 \label{def_f_varphi_chi_s}
 Let $\varphi:\AA^2\to\CC$ be a Schwartz-Bruhat function,  i.e.\ a locally constant function with compact support. Choose a Haar measure on $Z_\AA$ and define 
 $$ f_{\varphi,\chi}(s): g \longmapsto \int\limits_{Z_\AA} \varphi((0,1)zg)\chi(\det zg)\norm{\det zg}^{s+1/2}\;dz. $$
 This is a Tate integral and converges for $\Re s > 1/2-\Re\chi$ (cf.\ paragraph \ref{Euler_prod_Tate_integral}). The function $f_{\varphi,\chi}(s)$ is smooth and $K$-finite, and because
 $$ f_{\varphi,\chi}(s)(\tinymat a b {} d g) \ = \ \chi(ad^{-1})\,\norm{ad^{-1}}^{s+1/2}\,f_{\varphi,\chi}(s)(g), $$
 we have $f_{\varphi,\chi}(s)\in\cP(\chi\norm \ ^s)$. 
\end{pg}

\begin{prop}
 \label{def_varphi_0}
 Let $\Re\chi>1$. 
 \begin{enumerate}
  \item\label{prop1} For all $f\in\cP(\chi)$, there exists a Schwartz-Bruhat function $\varphi:\AA^2\to\CC$ such that $f=f_{\varphi,\chi}(0)$.
  \item\label{prop2} If $\chi\in\Xi_0$ and $f^0\in\cP(\chi)$ is the spherical vector, then $f_{\varphi_0,\chi}(0)=L(\chi^2,2s+1)f^0$ for the Schwartz-Bruhat function $\varphi_0 = h\,(q-1)^{-1}\,(\vol\cO_\AA^2)^{-1}\,\ch_{\cO_\AA^2}$.
 \end{enumerate} 
\end{prop}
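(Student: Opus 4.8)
The plan is to use Tate's thesis (paragraph \ref{Euler_prod_Tate_integral}) to relate the integral $f_{\varphi,\chi}(s)(g)$ to a Tate integral of a Schwartz--Bruhat function on $\AA$, and then exploit the $K$-finiteness of $f$ together with the Iwasawa decomposition. For part \eqref{prop1}, first I would observe that any $f\in\cP(\chi)$ is determined by its restriction $f|_K$, which spans a finite-dimensional space; write $K=\bigsqcup_j k_j K'$ for a suitable open compact subgroup $K'$ fixing $f|_K$, so $f$ is a finite $\CC$-linear combination of the characteristic-function-type vectors supported on $B_\AA k_j K'$. Thus it suffices to produce a single Schwartz--Bruhat $\varphi$ for each such ``elementary'' vector and take linear combinations. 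For a fixed $k_j$, using the Iwasawa decomposition $g=\tinymat a b {} d k$ one reduces, via the transformation law of $f_{\varphi,\chi}(s)$ in paragraph \ref{def_f_varphi_chi_s}, to evaluating $f_{\varphi,\chi}(0)$ on $K$; the factor $\varphi((0,1)zk)$ depends only on the bottom row $(0,1)k$ of $k$, and as $k$ ranges over $K$ this bottom row ranges over the primitive vectors of $\cO_\AA^2$ modulo scaling. Choosing $\varphi$ to be a suitable finite combination of characteristic functions of small cosets in $\AA^2$ (translating/dilating $\ch_{\cO_\AA^2}$) lets us match $f|_K$ on each of the finitely many cosets $k_j K'$, while the $Z_\AA$-integral contributes the Tate integral $L(\psi,\chi^2,2s+1)$ for the corresponding $\psi$ on $\AA$; since $\Re\chi>1$ this converges at $s=0$ and is a nonzero holomorphic multiple of $L(\chi^2,1)$, so after rescaling $\varphi$ we get exactly $f=f_{\varphi,\chi}(0)$.

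For part \eqref{prop2}, take $\varphi_0=h\,(q-1)^{-1}(\vol\cO_\AA^2)^{-1}\ch_{\cO_\AA^2}$ and compute directly. Writing $g=k\in K$ (both sides lie in $\cP(\chi)$, hence are determined by their values on $K$), the vector $(0,1)zk$ lies in $\cO_\AA^2$ exactly when $z\in\cO_\AA^\times\cdot(\text{something})$; more precisely $\varphi_0((0,1)zk)=h(q-1)^{-1}(\vol\cO_\AA^2)^{-1}\ch_{\cO_\AA}(\det z)$-type condition because $(0,1)k$ is already primitive in $\cO_\AA^2$ for $k\in K$, so the integrand is supported on $z$ with $\det z\in\cO_\AA$. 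Then
\[
 f_{\varphi_0,\chi}(s)(k)=h\,(q-1)^{-1}(\vol\cO_\AA^2)^{-1}\!\int_{Z_\AA}\ch_{\cO_\AA}(\det z)\,\chi(\det z)\norm{\det z}^{s+1/2}\,dz,
\]
and after identifying $Z_\AA\simeq\AA^\times$ via the scalar entry (so $\det z$ corresponds to the square of that entry, i.e.\ $\chi(\det z)=\chi^2$ of the entry and $\norm{\det z}^{s+1/2}=\norm{\ }^{2s+1}$) this becomes $h(q-1)^{-1}(\vol\cO_\AA^2)^{-1}\int_{\AA^\times}\ch_{\cO_\AA}(a)\chi^2(a)\norm a^{2s+1}d^\times a$. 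Recognising the measure normalisation and the fact that $\ch_{\cO_\AA}$ restricted to ideles is essentially $\psi_0$ from paragraph \ref{Euler_prod_Tate_integral} up to the stated constants, this last integral equals $L(\chi^2,2s+1)$; hence $f_{\varphi_0,\chi}(s)=L(\chi^2,2s+1)f^0$, and in particular at $s=0$.

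The main obstacle I anticipate is bookkeeping the normalisation constants and measure identifications precisely---matching $\vol\cO_\AA^2$, $\vol\cO_\AA$, the factor $h(q-1)^{-1}$ coming from the idele class number and the unit group $\FF_q^\times$, and the change of variables $Z_\AA\simeq\AA^\times$ under which $\det$ becomes squaring---so that the Tate integral comes out to be exactly $L(\chi^2,2s+1)$ rather than a scalar multiple of it. The rest is a routine application of the Iwasawa decomposition and Tate's thesis. A minor subtlety is that in part \eqref{prop1} one must check the resulting $f_{\varphi,\chi}(0)$ is genuinely $K$-finite and lies in $\cP(\chi)$, but this is automatic from paragraph \ref{def_f_varphi_chi_s}.
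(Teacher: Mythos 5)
For part \eqref{prop2} your argument is essentially the paper's: one checks that $f_{\varphi_0,\chi}(s)$ is right $K$-invariant, so it suffices to evaluate at $e$ (the paper does exactly this), and the $Z_\AA$-integral is then recognised as the Tate integral for $L(\chi^2,2s+1)$; the normalisation constants $h(q-1)^{-1}(\vol\cO_\AA^2)^{-1}$ are precisely there to make this come out on the nose, and the paper treats this bookkeeping no more explicitly than you propose to.

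For part \eqref{prop1} you take a genuinely different route. The paper does not construct $\varphi$ for a given $f$ at all: it quotes Weil for the existence of a single $\varphi$ with $f_{\varphi,\chi}(0)\neq 0$, notes that $g.f_{\varphi,\chi}(0)=f_{\varphi_g,\chi}(0)$ and that $\varphi\mapsto f_{\varphi,\chi}(0)$ is linear, so the image is a nonzero subrepresentation of $\cP(\chi)$, and then uses that $\Re\chi>1$ forces $\chi^2\neq\norm{\ }^{\pm1}$, hence $\cP(\chi)$ is irreducible and the image is all of $\cP(\chi)$. Your explicit construction (a Godement-section argument) can be pushed through, but as sketched it has two soft spots which are exactly what the irreducibility argument is designed to avoid. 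First, the non-vanishing of the Tate factor at $s=0$ does not follow from convergence: if $\chi^2$ is ramified at some place $x$, then integrating $\chi_x^2$ against $\ch_{\cO_x^2}$-type data gives $0$ by character orthogonality on $\cO_x^\times$, so "a nonzero holomorphic multiple of $L(\chi^2,1)$" is false for a careless choice of $\varphi$; you must take the local components at the bad places supported on cosets $v+\pi_x^N\cO_x^2$ with $N$ so large that $\chi_x^2$ is constant on the resulting sets $1+\pi_x^N\cO_x$. Second, a Schwartz--Bruhat function on $\AA^2$ is forced to be $\ch_{\cO_x^2}$ at almost all places (the set of everywhere-primitive vectors is not open in $\AA^2$), so you cannot localise the support of $\varphi$ at all places at once; this is harmless because $f$ is fixed by $K_x$ for almost all $x$ and the good-place Euler product is an absolutely convergent (hence nonzero) partial $L(\chi^2,1)$ for $\Re\chi>1$, but the coset-by-coset matching of $f|_K$ is then a finite linear problem at the bad places, not an immediate identification. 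With those repairs your construction works and is more informative than the paper's, at the cost of considerably more detail; the paper's proof buys brevity by exploiting irreducibility, which is available precisely because $\Re\chi>1$.
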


\begin{proof}
 In \cite[VII.6--VII.7]{Weil}, Weil constructs for every $\chi\in\Xi$ a Bruhat-Schwartz function $\varphi$ such that $f_{\varphi,\chi}(0)$ is nontrivial. For a proof of \eqref{prop2} observe that for $g=e$,
 $$ f_{\varphi_0,\chi}(0)(e) \ = \ \int\limits_{Z_\AA} \varphi_0((0,1)z)\chi(\det z)\norm{\det z}^{s+1/2}\;dz, $$
 which is the Tate integral for $L(\chi^2,2s+1)$ (cf.\ paragraph \ref{Euler_prod_Tate_integral}).

 For a proof of \eqref{prop1} observe that $\varphi_g=\varphi(\blanc g)$ is still a Schwartz-Bruhat function for every $g\in G_\AA$, and $g.f_{\varphi,\chi}(0)=f_{\varphi_g,\chi}(0)$ is still a function in $\cP(\chi)$. As explained in paragraph \ref{principal_series_is_irreducibel}, $\Re\chi>1$ implies that $\cP(\chi)$ is irreducible, and thus $G_\AA.f_{\varphi,\chi}(0)=\cP(\chi)$.
\end{proof}

\begin{pg}
 \label{E_varphi}
 Define 
 $$ E(g,\varphi,\chi,s) \ = \ \sum_{\gamma\in\lquot{B_F}{G_F}}\ f_{\varphi,\chi}(s)(\gamma g) $$
 for $\Re s>1/2-\Re\chi$. This definition extends to a meromorphic function of $s\in\CC$. Put $E(g,\varphi,\chi)=E(g,\varphi,\chi,0)$. The last proposition implies that the class of Eisenstein series of the form $E(\blanc,\varphi,\chi)$ is the same as the class of Eisenstein series of the form $E(\blanc,f)$. For $\chi\in\Xi_0$, we obtain the equality $E(\blanc,\varphi_0,\chi,s)=E(\blanc,\chi,s)$. 
\end{pg}


\section{The non-split torus case}
\label{section_non-split}

\begin{pg}
 Let $E$ be a separable quadratic field extension of $F$. Consider a non-split torus $T\subset G$, whose $F$-rational points are the image of $E^\times$ under an injective homomorphism of algebras $\Theta_E: E \to \Mat_2(F) $ given by the choice of a basis of $E$ over $F$. This homomorphism extends to $\Theta_E: \AA_E^\times \to G_{\AA_F} $. Let $\N_{E/F}:\AA_E^\times \to \AA_F^\times$ be the norm. We have that $\det(\Theta_E(t))=\N_{E/F}(t)$ (\cite[Prop.\ VI.5.6]{Lang2}).
 
 Let $h_E$ denote the class number of $E$ and let $q_E$ be the cardinality of the constant field of $E$. Consider the $\AA_F$-linear projection
 $$ \begin{array}{cccc}
     \pr: & \Mat_2 \AA_F & \longrightarrow & \AA_F^2 \;. \\ 
	       &    g       & \longmapsto     & (0,1)g
    \end{array} $$
 The kernel of $\pr$ is contained in the upper triangular matrices and does not contain any nontrivial central matrix. The intersection of the upper triangular matrices with $T_{\AA}$ is $Z_\AA$. Thus $\Theta_E(\AA_E)\cap\ker\pr=\{0\}$ and the $\AA_F$-linear map $\widetilde\Theta_E=\pr\circ\Theta_E: \AA_E\to\AA_F^2$ is injective. This implies that $\widetilde\Theta_E$ is an isomorphism of $\AA_F$-modules.
 
 In the natural topology as free $\AA_F$-modules, $\widetilde\Theta_E$ is thus a isomorphism of locally compact groups. Define $\varphi_T:\AA_F^2\to\CC$ as $h_E(q_E-1)^{-1}(\vol\cO_{\AA_E})^{-1}$ times the characteristic function of $\widetilde\Theta_E({\cO_{\AA_E}})$. Since $\widetilde\Theta_E$ is a homeomorphism, $\varphi_T$ and also $\varphi_{T,g}=\varphi_T(\blanc g)$ are Schwartz-Bruhat functions for all $g\in G_\AA$.
\end{pg}
 
\begin{lemma}
 \label{E^i_varphi=integral_sum}
 For $\Re s>1/2-\Re\chi$,
  $$ E(g,\varphi,\chi,s) \ = \ \int\limits_{\lquot{Z_F}{Z_\AA}} \sum_{u \in F^2-\{0\}} \varphi(uzg) \chi(\det zg) \norm{\det zg}^{s+1/2} \,d\!z \;. $$
\end{lemma}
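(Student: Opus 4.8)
The plan is to \emph{unfold}. Substituting the Tate-integral definition of $f_{\varphi,\chi}(s)$ from paragraph \ref{def_f_varphi_chi_s} into the series $E(g,\varphi,\chi,s)=\sum_{\gamma\in\lquot{B_F}{G_F}}f_{\varphi,\chi}(s)(\gamma g)$ of paragraph \ref{E_varphi} gives
$$ E(g,\varphi,\chi,s) \ = \ \sum_{\gamma\in\lquot{B_F}{G_F}}\ \int\limits_{Z_\AA} \varphi\bigl((0,1)z\gamma g\bigr)\,\chi(\det z\gamma g)\,\norm{\det z\gamma g}^{s+1/2}\,d\!z, $$
and the goal is to merge the outer sum and the inner integral into a single sum over $F^2-\{0\}$. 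Two elementary facts drive this. First, $B$ is the stabiliser of the line $F\cdot(0,1)$ under the right action of $G$ on the set of lines in its standard $2$-dimensional representation, and $G_F$ acts transitively on the set $\PP^1(F)$ of such lines; hence $\gamma\mapsto F\cdot(0,1)\gamma$ is a bijection $\lquot{B_F}{G_F}\to\PP^1(F)$, and we may choose coset representatives $\gamma$ so that the vectors $u_\gamma:=(0,1)\gamma$ form a full set of representatives for the $F^\times$-orbits on $F^2-\{0\}$. Second, $Z_F\simeq F^\times$ is discrete in $Z_\AA\simeq\AA^\times$, so $\int_{Z_\AA}\Psi\,d\!z=\int_{\lquot{Z_F}{Z_\AA}}\sum_{\alpha\in Z_F}\Psi(\alpha z)\,d\!z$ for every integrable $\Psi$ on $Z_\AA$, where $\lquot{Z_F}{Z_\AA}$ carries the quotient measure.

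Next I would carry out the computation. Applying the unfolding identity to the inner integral turns the displayed expression into $\sum_{\gamma}\int_{\lquot{Z_F}{Z_\AA}}\sum_{\alpha\in Z_F}(\cdots)\,d\!z$. Writing $\alpha=aI$ with $a\in F^\times$ and $z=tI$ with $t\in\AA^\times$, one computes $(0,1)\alpha z\gamma g=(au_\gamma)(zg)$ and $\det(\alpha z\gamma g)=a^2(\det\gamma)\det(zg)$. Because $a^2\det\gamma\in F^\times$, it lies in the kernel of $\chi$ and has idele norm $1$ by the product formula, so the twisting factor depends on neither $a$ nor $\gamma$: indeed $\chi(\det\alpha z\gamma g)\norm{\det\alpha z\gamma g}^{s+1/2}=\chi(\det zg)\norm{\det zg}^{s+1/2}$. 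Pulling this common factor out of the two sums yields
$$ E(g,\varphi,\chi,s) \ = \ \int\limits_{\lquot{Z_F}{Z_\AA}} \Bigl(\ \sum_{\gamma\in\lquot{B_F}{G_F}}\ \sum_{a\in F^\times}\ \varphi\bigl((au_\gamma)(zg)\bigr)\Bigr)\ \chi(\det zg)\,\norm{\det zg}^{s+1/2}\,d\!z, $$
and since, by the choice of representatives, the pairs $(\gamma,a)$ parametrise $F^2-\{0\}$ via $(\gamma,a)\mapsto au_\gamma$ exactly once, the bracketed double sum equals $\sum_{u\in F^2-\{0\}}\varphi(uzg)$. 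This is the asserted identity.

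The step requiring genuine care --- and the one I expect to be the main obstacle --- is justifying the interchange of summation and integration underlying the unfolding, which is valid only in the range $\Re s>1/2-\Re\chi$. I would deal with this by replacing $\varphi$ with $\norm\varphi$ and $\chi$ with $\norm\ ^{\Re\chi}$ and running the same manipulations verbatim: every rearrangement is then an equality of non-negative quantities and hence unconditionally valid by Tonelli's theorem, and the resulting common value is $E(g,\norm\varphi,\norm\ ^{\Re\chi},\Re s)$ --- an Eisenstein series of the type of paragraph \ref{E_varphi}, equivalently a convergent global Tate integral as in paragraph \ref{Euler_prod_Tate_integral} --- which is finite precisely for $\Re s>1/2-\Re\chi$. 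This finiteness is what licenses Fubini's theorem and the regrouping of the sum over $F^2-\{0\}$ throughout the stated range, completing the proof.
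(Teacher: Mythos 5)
Your proposal is correct and follows essentially the same route as the paper: unfolding the Eisenstein sum via the bijection $\lquot{B_F}{G_F}\simeq\PP^1(F)=\lquot{Z_F}{(F^2-\{0\})}$ and merging it with the central Tate integral by Fubini. The only (minor) difference is that the paper justifies the interchange by citing the absolute convergence of $\sum_{\gamma\in\lquot{B_F}{G_F}}f(\gamma g)$ for $f\in\cP(\chi\norm\ ^s)$ from Li's Theorem 2.3, whereas you rederive this via a Tonelli majorant built from $\norm\varphi$ and $\norm\ ^{\Re\chi}$ — both rest on the same convergence fact for $\Re s>1/2-\Re\chi$.
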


\begin{proof}
 Let $G_F$ act on $\PP^1(F)$ by multiplication from the right. Then $B_F$ is the stabiliser of $[0:1]$, and thus we have a bijection
 $$ \begin{array}{ccl}
     \lquot{B_F}{G_F} & \overset{1:1}{\longrightarrow} & \PP^1(F) \ = \ \lquot{Z_F}{(F^2-\{0\})} \;. \\
            g         & \longmapsto                    & [0:1] g
	 \end{array} $$
 Since $\sum_{\gamma \in \lquot{B_F}{G_F}} f(\gamma g)$ is absolutely convergent for every $f\in\cP(\chi\norm \ ^s)$ and $g\in G_\AA$, (\cite[Thm.\ 2.3]{Li}), the lemma follows by Fubini's theorem.
\end{proof}

The following is a refinement of Zagier's translation of a formula of Hecke into adelic language (\cite[pp.\ 298-299]{Zagier1}).

\begin{thm}
 \label{thm_Zagier_anis}
 Let $T$ be a non-split torus corresponding to a separable field extension $E/F$. For every $\varphi:\AA^2\to\CC$ that is a Schwartz-Bruhat function, $g\in G_\AA$ and $\chi\in\Xi$, there exists a holomorphic function $e_T(g,\varphi,\chi,s)$ of $s\in\CC$ with the following properties.
 \begin{enumerate}
  \item\label{anis1} For all $s\in\CC$ such that $\chi^2\norm \ ^{2s}\neq\norm \ ^{\pm1}$,
       $$ E_T(g,\varphi,\chi,s) \ = \ e_T(g,\varphi,\chi,s)\ L_E(\chi\circ\N_{E/F},s+1/2) \;. $$
  \item\label{anis2} For every $g\in G_\AA$ and $\chi\in\Xi$, there is a Schwartz-Bruhat function $\varphi:\AA^2\to\CC$ such that
                     $$ e_T(g,\varphi,\chi,s) \ = \ \chi(\det g)\norm{\det g}^{s+1/2} $$
		     for all $s\in\CC$.
 \end{enumerate}
\end{thm}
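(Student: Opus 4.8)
## Proof proposal

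The plan is to compute the toroidal integral $E_T(g,\varphi,\chi,s)$ directly from its definition as an integral over $\lquot{T_FZ_\AA}{T_\AA}$ of the Eisenstein series, using the formula for $E(g,\varphi,\chi,s)$ given in Lemma~\ref{E^i_varphi=integral_sum}. First I would substitute that expression into the toroidal integral and unfold. Since $T_\AA = \Theta_E(\AA_E^\times)$, integrating over $\lquot{T_FZ_\AA}{T_\AA}$ against the inner integral over $\lquot{Z_F}{Z_\AA}$ should combine into a single integral over $\lquot{E^\times}{\AA_E^\times}$, using that $Z_F$ sits inside $T_F\simeq E^\times$ and that $\det\Theta_E(t)=\N_{E/F}(t)$. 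The key point is that the orbit of $F^2-\{0\}$ under $T_F=E^\times$, viewed via the isomorphism $\widetilde\Theta_E:\AA_E\to\AA_F^2$, is exactly $E^\times$ (since $E^\times$ acts simply transitively on itself), so the sum over $u\in F^2-\{0\}$ together with the integral over $\lquot{E^\times}{\AA_E^\times}$ collapses to a single integral over $\AA_E^\times$.

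The outcome of this unfolding should be, after substituting $\varphi=\varphi_T$ and using $\chi(\det\Theta_E(t)g)=\chi(\N_{E/F}(t))\chi(\det g)$ together with $\norm{\det\Theta_E(t)g}=\norm{\N_{E/F}(t)}\norm{\det g}$, precisely a Tate integral over $\AA_E^\times$ for the character $\chi\circ\N_{E/F}$ in the variable $s+1/2$, multiplied by $\chi(\det g)\norm{\det g}^{s+1/2}$ and a translate coming from $g$. By Tate's thesis (paragraph~\ref{Euler_prod_Tate_integral}), any such Tate integral is a holomorphic multiple of $L_E(\chi\circ\N_{E/F},s+1/2)$; call this holomorphic multiple $e_T(g,\varphi,\chi,s)$, which proves part~\eqref{anis1}. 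For part~\eqref{anis2}, the normalisation constant $h_E(q_E-1)^{-1}(\vol\cO_{\AA_E})^{-1}$ in the definition of $\varphi_T$ was chosen precisely so that the Tate integral for $\varphi_T$ (at $g=e$) equals $L_E(\chi\circ\N_{E/F},s+1/2)$ exactly, by the unramified computation in paragraph~\ref{Euler_prod_Tate_integral} applied over $E$; then $e_T(e,\varphi_T,\chi,s)=1$, and for general $g$ one replaces $\varphi_T$ by the translate $\varphi_{T,g}$, which by the same computation yields $e_T(g,\varphi_{T,g}^{\,-1\text{-adjusted}},\chi,s)=\chi(\det g)\norm{\det g}^{s+1/2}$ — more precisely, one checks that translating $\varphi$ by $g^{-1}$ cancels the $g$-dependence in the remaining factor, leaving exactly $\chi(\det g)\norm{\det g}^{s+1/2}$.

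The main obstacle I expect is the bookkeeping in the unfolding step: one has to be careful that the change of variables identifying $\lquot{Z_F}{(F^2-\{0\})}=\PP^1(F)$ with the $T_F$-orbit structure is compatible with the quotient measures chosen in paragraph~\ref{def_toroidal_integral} (where $T_\AA\simeq\AA_E^\times$ and $Z_\AA\simeq\AA_F^\times$ as measure spaces), and that Fubini applies — which requires the absolute convergence in the region $\Re s>1/2-\Re\chi$, available from Lemma~\ref{E^i_varphi=integral_sum} and the compactness of $\lquot{T_FZ_\AA}{T_\AA}$. A secondary subtlety is tracking the exact constant so that the normalisation of $\varphi_T$ produces the clean answer $1$ at $g=e$ rather than some spurious volume factor; this is where the precise form $h_E(q_E-1)^{-1}(\vol\cO_{\AA_E})^{-1}$ and the analogue over $E$ of the function $\psi_0$ from paragraph~\ref{Euler_prod_Tate_integral} enter. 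Once convergence and measure-compatibility are settled, the identification with the Tate integral and the invocation of Tate's thesis are essentially formal.
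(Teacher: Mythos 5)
Your proposal is correct and follows essentially the same route as the paper's own proof: unfold the toroidal integral via Lemma \ref{E^i_varphi=integral_sum} and Fubini, merge the central and toroidal quotients into $\lquot{T_F}{T_\AA}$, identify $F^2-\{0\}$ with $E^\times$ through $\widetilde\Theta_E$ so that the sum collapses with the integral over $\lquot{E^\times}{\AA_E^\times}$ into a Tate integral over $\AA_E^\times$, and invoke Tate's thesis both for holomorphy of $e_T$ and for part \eqref{anis2}. The one small point to note is that part \eqref{anis2} is asserted for all $\chi\in\Xi$, so for ramified $\chi$ you must use the general statement of paragraph \ref{Euler_prod_Tate_integral} (existence of some $\psi$ with $L_E(\psi,\chi\circ\N_{E/F},s)=L_E(\chi\circ\N_{E/F},s)$, pulled back through $\widetilde\Theta_E$ and the $g$-translation) rather than the unramified normalisation of $\varphi_T$, which is exactly how the paper argues, with $\varphi_T$ reserved for the unramified case.
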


\begin{proof}
 For every Schwartz-Bruhat function $\varphi:\AA^2\to\CC$, $g\in G_\AA$ and $\chi\in\Xi$, both $E_T(g,\varphi,\chi,s)$ and $L_E(\chi\circ\N_{E/F},s+1/2)$ are meromorphic functions of $s\in\CC$. Define $e_T(g,\varphi,\chi,s)$ as their quotient. This is a meromorphic function in $s$ that satisfies \eqref{anis1}. We postpone the proof that $e_T(g,\varphi,\chi,s)$ is holomorphic in $s$ to the very end and continue with showing that $e_T(g,\varphi,\chi,s)$ satisfies part \eqref{anis2}.

 Note that our choices of Haar measures fit the applications of Fubini's theorem in the following calculations. Let $\Re s > 1/2-\Re\chi$, then Lemma \ref{E^i_varphi=integral_sum} applies, and we obtain
 $$ E_T(g,\varphi,\chi,s) \ = \hspace{-6pt}\int\limits_{\lquot{T_FZ_\AA}{T_\AA}}\quad\int\limits_{\lquot{Z_F}{Z_\AA}} \sum_{u \in F^2-\{0\}} \varphi(uztg)\,\chi(\det(ztg))\,\norm{\det(ztg)}^{s+1/2}\,dz\,dt \;. $$
 Since $\lquot{T_F}{T_\AA} \simeq (\lquot{T_FZ_\AA}{T_\AA}) \times \bigl(\lquot{Z_F}{Z_\AA}\bigr)$, we can apply Fubini's theorem to derive
 $$ E_T(g,\varphi,\chi,s) \ = \ \int\limits_{\lquot{T_F}{T_\AA}} \sum_{u \in F^2-\{0\}} \varphi(utg)\,\chi(\det(tg))\,\norm{\det(tg)}^{s+1/2} \,dt \;. $$
 The map $\Theta_E$ identifies $\AA_E^\times$ with $T_{\AA_F}$. The $\AA_F$-linear isomorphism $\widetilde\Theta_E$ identifies $\AA_E$ with $\AA_F^2$ and restricts to a bijection between $E^\times$ and $F^2-\{0\}$. Thus we can rewrite the integral as
 $$ \chi(\det g)\,\norm{\det g}^{s+1/2} \int\limits_{\lquot{E^\times}{\AA_E^\times}} \sum_{u\in E^\times} \varphi(\widetilde\Theta_E(ut)g)\,\chi(\N_{E/F}(t))\,\norm{\N_{E/F}(t)}^{s+1/2} \,dt \;. $$
 If we define $\tilde\varphi_g=\varphi\bigl(\widetilde\Theta_E(\blanc)g\bigr):\AA_E\to\CC$ and apply Fubini's theorem again, we get
 $$ E_T(g,\varphi,\chi,s) \ = \ \chi(\det g)\,\norm{\det g}^{s+1/2} \int\limits_{\AA_E^\times} \tilde\varphi_g(t)\,\chi\circ\N_{E/F}(t)\,\norm{t}_{\AA_E}^{s+1/2} \,dt \;. $$
 Note that $\tilde\varphi_g:\AA_E\to\CC$ is a Bruhat-Schwartz function as $\varphi$ is one. Thus the integral is the Tate integral $L_E(\tilde\varphi_g,\chi\circ\N_{E/F},s+1/2)$. There is a Schwartz-Bruhat function $\psi:\AA_E\to\CC$ such that 
 $$ L_E(\psi,\chi\circ\N_{E/F},s+1/2) \ = \ L_E(\chi\circ\N_{E/F},s+1/2) $$
 (paragraph \ref{Euler_prod_Tate_integral}). If we define $\varphi:\AA_F^2\to\CC$ to be the Schwartz-Bruhat function such that $\psi=\tilde\varphi_g$, then $e_T(g,\varphi,\chi,s)=\chi(\det g)\,\norm{\det g}^{s+1/2}$. If $\chi\in\Xi_0$, then $\chi\circ\N_{E/F}$ is an unramified character of $\AA_E$ and
 $$ \psi=\varphi_{T,g^{-1}}\bigl(\widetilde\Theta_E(\blanc)g\bigr)=\varphi_T\circ\widetilde\Theta_E $$
 yields the desired $\psi$ as it adopts the role of $\psi_0$ in paragraph \ref{Euler_prod_Tate_integral}, and part \eqref{anis2} is proven. 
 
 Since $L_E(\psi,\chi\circ\N_{E/F},s+1/2)$ equals a holomorphic multiple of $L_E(\chi\circ\N_{E/F},s+1/2)$ in $s\in\CC$ for any Schwartz-Bruhat function $\psi=\tilde\varphi_g$ (paragraph \ref{Euler_prod_Tate_integral}), we finally see that the function $e_T(g,\varphi,\chi,s)$ is holomorphic in $s$.
\end{proof}

 By the definition of $E$-toroidality, we obtain as an immediate consequence:

\begin{cor}
 \label{cor_anis}
 Let $\chi\in\Xi$ such that $\chi^2\neq\norm \ ^{\pm1}$ and let $\varphi:\AA^2\to\CC$ be a Schwartz-Bruhat function. Let $E/F$ be a separable quadratic field extension. Then $E(\blanc,\varphi,\chi)$ is $E$-toroidal if and only if $L_E(\chi\circ\N_{E/F},1/2)=0$.\qed
\end{cor}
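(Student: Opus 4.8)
The plan is to derive the corollary directly from Theorem~\ref{thm_Zagier_anis} together with the definition of $E$-toroidality. First I would recall that, by the definition of the toroidal integral in paragraph~\ref{def_toroidal_integral} and of $\cA_\tor(E)$ in Definition~\ref{def_toroidal_forms}, the Eisenstein series $E(\blanc,\varphi,\chi)$ is $E$-toroidal if and only if $E_T(g,\varphi,\chi) = E_T(g,\varphi,\chi,0) = 0$ for all $g \in G_\AA$, where $T$ is a torus in the conjugacy class corresponding to $E$ (independence of the choice of $T$ being recorded in paragraph~\ref{tor_def_well-defined}).

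Next I would invoke part~\eqref{anis1} of Theorem~\ref{thm_Zagier_anis}: for every Schwartz-Bruhat $\varphi$ and every $g$, one has
$$ E_T(g,\varphi,\chi,s) \ = \ e_T(g,\varphi,\chi,s)\ L_E(\chi\circ\N_{E/F},s+1/2) $$
as an identity of meromorphic functions of $s$, valid in particular at $s=0$ since the hypothesis $\chi^2 \neq \norm \ ^{\pm1}$ guarantees $\chi^2\norm \ ^{0}\neq\norm \ ^{\pm1}$. Because $e_T(g,\varphi,\chi,s)$ is holomorphic in $s$, evaluating at $s=0$ gives $E_T(g,\varphi,\chi) = e_T(g,\varphi,\chi,0)\cdot L_E(\chi\circ\N_{E/F},1/2)$. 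Hence if $L_E(\chi\circ\N_{E/F},1/2)=0$, then $E_T(g,\varphi,\chi)=0$ for all $g$, and $E(\blanc,\varphi,\chi)$ is $E$-toroidal. This settles the ``if''-direction for an arbitrary $\varphi$.

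For the converse, suppose $L_E(\chi\circ\N_{E/F},1/2)\neq0$. Here the point is that $E$-toroidality of $E(\blanc,\varphi,\chi)$ must be tested for \emph{all} $g\in G_\AA$, so it suffices to exhibit a single $g$ with $E_T(g,\varphi,\chi)\neq0$; but a subtlety is that $\varphi$ is given, so I cannot freely replace it. The remedy is to use part~\eqref{anis2}: the class of Eisenstein series $E(\blanc,\varphi,\chi)$ as $\varphi$ ranges over Schwartz-Bruhat functions coincides with the class $E(\blanc,f)$, $f\in\cP(\chi)$ (paragraph~\ref{E_varphi}), and the map $f\mapsto E(\blanc,f)$ is a morphism of $\cH$-modules; so $E$-toroidality is really a statement about the image representation and is insensitive to which $\varphi$ realizes a given Eisenstein series. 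Concretely, by part~\eqref{anis2} there is some $\varphi'$ with $e_T(g,\varphi',\chi,s)=\chi(\det g)\norm{\det g}^{s+1/2}$, which is nowhere zero; for that $\varphi'$ we get $E_T(g,\varphi',\chi)=\chi(\det g)\norm{\det g}^{1/2}\cdot L_E(\chi\circ\N_{E/F},1/2)\neq0$, so $E(\blanc,\varphi',\chi)$ is not $E$-toroidal. Since $\varphi$ and $\varphi'$ produce Eisenstein series in the same $G_\AA$-orbit closure — more precisely, the $\cH$-submodule of $\cA$ generated by the Eisenstein series is the same — the non-toroidality transfers back to $E(\blanc,\varphi,\chi)$ whenever the latter is nonzero. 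I expect the main obstacle to be exactly this transfer step: making precise that ``$E$-toroidal'' depends only on the Eisenstein series (equivalently, on $f\in\cP(\chi)$) and not on the chosen $\varphi$, and handling the degenerate case where $E(\blanc,\varphi,\chi)$ itself is the zero automorphic form (in which case it is trivially $E$-toroidal, but then $\varphi$ does not realize a nonzero section, so the equivalence is to be read for non-trivial Eisenstein series, consistent with the phrasing elsewhere in the paper).
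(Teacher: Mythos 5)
Your proposal is correct and follows the same route the paper intends: the corollary is stated there as an immediate consequence of Theorem \ref{thm_Zagier_anis} (part \eqref{anis1} at $s=0$ for the ``if''-direction, the non-vanishing factor from part \eqref{anis2} for the converse) together with the definition of $E$-toroidality. Your additional transfer step --- using that $\cA_\tor(E)$ is a subrepresentation and that $\cP(\chi)$ is irreducible for $\chi^2\neq\norm{\ }^{\pm1}$ to pass from the auxiliary $\varphi'$ back to the given $\varphi$, and flagging the degenerate case $E(\blanc,\varphi,\chi)=0$ --- is a legitimate and careful way to handle the fixed-$\varphi$ quantifier that the paper's ``immediate consequence'' glosses over.
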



\section{The split torus case}
\label{section_split}

\begin{pg}
 \label{varphi_T_for_diagonal_torus}
 In this section, we establish the analogue of Theorem \ref{thm_Zagier_anis} for split tori, which is also the adelic translation of a long-known formula (\cite[eq.\ (30)]{Zagier1}). To begin with, let $T=\{\tinymat {\ast} {} {} {\ast} \}\subset G$  be the diagonal torus. We write $\AA$ for the adeles of $F$. Define the Schwartz-Bruhat function $\varphi_T:\AA^2\to\CC$ as $h(q-1)^{-1}(\vol\cO_{\AA})^{-1}$ times the characteristic function of $\cO_\AA^2$, which is the same as $\varphi_0$ as defined in Proposition \ref{def_varphi_0}. Put $\varphi_{T,g}=\varphi_T(\blanc g)$, which is a Schwartz-Bruhat function since multiplying with $g$ from the right is an automorphism of the locally compact group $\AA_F^2$. Recall from paragraph \ref{def_f_varphi_chi_s} that we defined
 $$ f_{\varphi,\chi}(s)(g) = \int\limits_{Z_\AA} \varphi((0,1)zg)\chi(\det(zg))\norm{\det(zg)}^{s+1/2}\;dz $$
 for $\Re s>1/2-\Re\chi$. Put $e=\tinymat 1 {} {} 1 $ and $w_0=\tinymat {} 1 1 {} $.
\end{pg}

\begin{lemma}
 \label{lemma_zagier_diag}
 Let $T$ be the diagonal torus. For every $\varphi:\AA^2\to\CC$ that is a Schwartz-Bruhat function, $g\in G_\AA$ and $\chi\in\Xi$, there exists a holomorphic function $\tilde e_T(g,\varphi,\chi,s)$ of $s\in\CC$ with the following properties.
 \begin{enumerate}
  \item\label{diag1} For all $s\in\CC$ such that $\chi^2\norm \ ^{2s}\neq\norm \ ^{\pm1}$,
        \begin{multline*} 
        \int\limits_{\lquot{T_FZ_\AA}{T_\AA}}\bigl(E(tg,\varphi,\chi,s)-f_{\varphi,\chi}(s)(tg)-f_{\varphi,\chi}(s)(w_0tg)\bigr)\,dt\\
        = \ \tilde e_T(g,\varphi,\chi,s)\ \bigl(L(\chi,s+1/2)\bigr)^2 \;.
        \end{multline*} 
        In particular, the left hand side is well-defined and converges.
  \item\label{diag2} For every $g\in G_\AA$ and $\chi\in\Xi$, there is a Schwartz-Bruhat function $\varphi:\AA^2\to\CC$ such that
  							$$ \tilde e_T(g,\varphi,\chi,s) \ = \ \chi(\det g)\norm{\det g}^{s+1/2} $$
							for all $s\in\CC$. If $\chi\in\Xi_0$, then $\varphi=\varphi_{T,g^{-1}}$ satisfies the equation.
 \end{enumerate}
\end{lemma}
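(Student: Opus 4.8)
The plan is to mimic the computation in the proof of Theorem \ref{thm_Zagier_anis}, but to deal with the non-compactness of $\lquot{T_FZ_\AA}{T_\AA}$ for the split torus by subtracting the two constant-term-type pieces $f_{\varphi,\chi}(s)(tg)$ and $f_{\varphi,\chi}(s)(w_0tg)$ before integrating. First, for $\Re s>1/2-\Re\chi$, I would unfold $E(tg,\varphi,\chi,s)=\sum_{\gamma\in\lquot{B_F}{G_F}}f_{\varphi,\chi}(s)(\gamma tg)$ using the Bruhat decomposition $G_F=B_F\sqcup B_Fw_0N_F$. The two ``diagonal'' cosets $\gamma\in\{e,w_0\}$ give precisely the terms $f_{\varphi,\chi}(s)(tg)$ and $f_{\varphi,\chi}(s)(w_0tg)$ that are subtracted, so the integrand becomes $\sum_{\gamma\in\lquot{B_F}{B_Fw_0N_F},\,\gamma\neq w_0}f_{\varphi,\chi}(s)(\gamma tg)=\sum_{n\in N_F-\{0\}}f_{\varphi,\chi}(s)(w_0 n tg)$. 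Expressing $f_{\varphi,\chi}(s)$ as its defining Tate integral over $Z_\AA$ and using $\lquot{T_F}{T_\AA}\simeq(\lquot{T_FZ_\AA}{T_\AA})\times(\lquot{Z_F}{Z_\AA})$ together with Fubini (justified by the choices of Haar measures and the absolute convergence from \cite[Thm.\ 2.3]{Li}), the integral collapses to an integral over $\lquot{T_F}{T_\AA}$ of $\sum_{n\in N_F-\{0\}}\varphi((0,1)w_0 ntg)\,\chi(\det tg)\,\norm{\det tg}^{s+1/2}$.

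\textbf{Reduction to a product of two Tate integrals.} The key point is that for the diagonal torus, writing $t=\tinymat{t_1}{}{}{t_2}$ and $n=\tinymat 1 x {} 1$, one has $(0,1)w_0 nt=(t_1, xt_1)$, hmm -- more carefully $(0,1)w_0=(1,0)$, so $(0,1)w_0nt=(1,0)nt=(1,0)\tinymat{t_1}{xt_1}{}{t_2}=(t_1,xt_1)$; as $x$ runs over $F$ and $t_1$ over $\AA^\times$, and $t_2$ over $\AA^\times$ independently, the lattice $\{(t_1,xt_1):x\in F\}$ together with the scaling by $t_1,t_2$ exhibits the integral as a double Tate integral. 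Concretely, after the change of variables absorbing $g$ (which contributes the factor $\chi(\det g)\norm{\det g}^{s+1/2}$ exactly as in Theorem \ref{thm_Zagier_anis}) and using $\N$-compatibility $\det\tinymat{t_1}{}{}{t_2}=t_1t_2$, the integral factors as $\bigl(\int_{\AA^\times}\psi_1(a)\chi(a)\norm a^{s+1/2}da\bigr)\bigl(\int_{\AA^\times}\psi_2(b)\chi(b)\norm b^{s+1/2}db\bigr)=L(\psi_1,\chi,s+1/2)\,L(\psi_2,\chi,s+1/2)$ for suitable Schwartz--Bruhat functions $\psi_1,\psi_2$ on $\AA$ built from $\varphi$ and $g$. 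By paragraph \ref{Euler_prod_Tate_integral} each factor is a holomorphic multiple of $L(\chi,s+1/2)$, so defining $\tilde e_T(g,\varphi,\chi,s)$ as the quotient of the left-hand side by $\bigl(L(\chi,s+1/2)\bigr)^2$ gives a holomorphic function satisfying \eqref{diag1}, and convergence of the left-hand side in the stated range follows since it equals a convergent Tate-integral product. For \eqref{diag2}, by paragraph \ref{Euler_prod_Tate_integral} one can choose $\varphi$ so that both $\psi_1$ and $\psi_2$ realise $L(\chi,s+1/2)$ exactly, whence $\tilde e_T=\chi(\det g)\norm{\det g}^{s+1/2}$; in the unramified case the explicit $\psi_0$ of paragraph \ref{Euler_prod_Tate_integral} arises from $\varphi_{T,g^{-1}}$, giving the last assertion.

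\textbf{Expected main obstacle.} The routine part is the Bruhat unfolding and the bookkeeping of measures. The delicate point is the \emph{analytic continuation and convergence} claim in \eqref{diag1}: a priori the subtracted integral only makes sense for $\Re s$ large, and one must argue that the identity with $\tilde e_T\cdot(L(\chi,s+1/2))^2$ propagates to all $s$ with $\chi^2\norm{\ }^{2s}\neq\norm{\ }^{\pm1}$, i.e.\ that ``well-defined and converges'' holds throughout that region rather than only where the naive unfolding is valid. I expect to handle this exactly as in Theorem \ref{thm_Zagier_anis}: first establish the identity as an equality of functions defined by convergent integrals on the half-plane of convergence, then invoke the meromorphic continuation of $E(\blanc,\varphi,\chi,s)$ and of the $f_{\varphi,\chi}(s)$-terms (whose poles are controlled by $\chi^2\norm{\ }^{2s}=\norm{\ }^{\pm1}$) to conclude that the left-hand side extends holomorphically and agrees with the right-hand side on the complement of that locus. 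A minor additional subtlety is checking that the $n=0$ contribution is precisely $f_{\varphi,\chi}(s)(w_0tg)$ and not off by the $e$-term, so that exactly the two advertised terms are removed and no convergence-spoiling term survives.
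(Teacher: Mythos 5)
Your strategy is the one the paper uses: discard the two cosets $e$ and $w_0$ (whose contributions are exactly the subtracted terms), unfold the remaining cell of $\lquot{B_F}{G_F}$, apply Fubini via $(\lquot{T_FZ_\AA}{T_\AA})\times Z_\AA\simeq(\lquot{T_F}{T_\AA})\times Z_F$, and reduce to Tate integrals, with the identity then propagated by meromorphic continuation. But two steps in your middle paragraph need repair. First, the matrix computation: with $n=\tinymat 1 x {} 1 $ and $t=\tinymat {t_1} {} {} {t_2} $ one has $(0,1)w_0nt=(1,0)nt=(t_1,\,xt_2)$, not $(t_1,\,xt_1)$ (you multiplied $tn$ instead of $nt$). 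This is not cosmetic: with $(t_1,xt_1)$ the second torus variable never enters the argument of $\varphi$, the sum over $x\in F^\times$ only unfolds the $t_1$-integral, and you are left with an integral of $\chi(t_2)\norm{t_2}^{s+1/2}$ over $\lquot{F^\times}{\AA^\times}$, which diverges. With the correct $(t_1,xt_2)$, the sum over $x$ and the $Z_F$-sum produced by Fubini unfold the two class-group integrals to integrals over $\AA^\times\times\AA^\times$, as in the paper.

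Second, and more substantively, your claim that the resulting double integral ``factors as $L(\psi_1,\chi,s+1/2)\,L(\psi_2,\chi,s+1/2)$ for suitable Schwartz--Bruhat functions $\psi_1,\psi_2$ built from $\varphi$ and $g$'' is false for a general $\varphi$: what you actually get is $\int\!\!\int\varphi_g(t_1,t_2)\,\chi(t_1t_2)\,\norm{t_1t_2}^{s+1/2}\,dt_1\,dt_2$, and $\varphi_g$ need not be a pure tensor. This is precisely where the paper's proof does its real work: the inner integral is a Tate integral for the Schwartz--Bruhat function $\varphi_g(\blanc,t_2)$, equal to $\tilde\varphi_g(t_2)\,L(\chi,s+1/2)$, and one must check that $\tilde\varphi_g$ is itself locally constant with compact support before taking the second Tate integral; only then does one obtain a holomorphic multiple of $\bigl(L(\chi,s+1/2)\bigr)^2$ for \emph{every} $\varphi$, which is needed downstream (Theorem \ref{thm_Zagier_split} applies the lemma to $\hat\varphi$). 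Alternatively you could write $\varphi_g$ as a finite sum of pure tensors and conclude by bilinearity, each summand being a genuine product of two Tate integrals. Either repair is routine, but as written your argument for part \eqref{diag1} covers only product-type $\varphi$; for part \eqref{diag2} your construction (choosing $\varphi$ with $\varphi_g=\psi\otimes\psi$, and $\varphi_{T,g^{-1}}$ in the unramified case) does agree with the paper's.
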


\begin{proof}
 Let $\Re s>1/2-\Re\chi$, and denote the left hand side of the equation in \eqref{diag1} by $I$. Note that our choices of Haar measures match with the following applications of Fubini's theorem. We choose $\left\{ e,w_0,\tinymat 1 {} c 1 \right\}_{c \in F^\times}$ as a system of representatives of $B_F \backslash G_F$. By definition of $E(tg,\varphi,\chi,s)$,
 $$ E(tg,\varphi,\chi,s)-f_{\varphi,\chi}(s)(tg)-f_{\varphi,\chi}(s)(w_0tg)
    \ = \ \sum_{c\in F^\times} \, f_{\varphi,\chi}(s)\Bigl(\tinymat 1 {} c 1 tg\Bigr) \;. $$
 Hence
 $$ I \ = \ \int\limits_{\lquot{T_FZ_\AA}{T_\AA}} \sum_{c\in F^\times} \, f_{\varphi,\chi}(s)\Bigl(\tinymat 1 {} c 1 tg\Bigr)\,dt \;. $$
 Note that this is a well-defined expression since
 $$ f_{\varphi,\chi}(s)\Bigl(\tinymat 1 {} c 1 \ \tinymat zt_1 {} {} zt_2 \Bigr) \ = \ f_{\varphi,\chi}(s)\Bigl(\tinymat zt_1 {} {} zt_2 \ \tinymat 1 {} ct_1t_2^{-1} 1 \Bigr) \ = \ f_{\varphi,\chi}(s)\Bigl(\tinymat 1 {} ct_1t_2^{-1} 1 \Bigr) $$
 for $\tinymat zt_1 {} {} zt_2 \in T_FZ_\AA$, so changing the representative of $t\in\lquot{T_FZ_\AA}{T_\AA}$ only permutes the set $\left\{\tinymat 1 {} c 1 \right\}_{c \in F^\times}$. Inserting the definition of $f_{\varphi,\chi}(s)$ yields
 $$ I \ = \ \int\limits_{\lquot{T_FZ_\AA}{T_\AA}} \sum_{c\in F^\times} \int\limits_{Z_\AA} \varphi\bigl((c,1)ztg\bigr)\ \chi(\det(ztg))\,\norm{\det(ztg)}^{s+1/2}\,dz\,dt \;. $$
 By writing $\varphi_g$ for the Schwartz-Bruhat function $\varphi(\blanc g)$, applying Fubini's theorem to 
 $$ \bigl(\lquot{T_F Z_{\AA}}{T_{\AA}}\bigr) \times Z_{\AA} \ \simeq \ \bigl(\lquot{T_F}{T_{\AA}}\bigr) \times Z_F $$ 
 and observing that we have $\det z\in F^\times\subset\ker(\chi\norm \ ^{s+1/2})$ for a matrix $z \in Z_F$, we find
 $$ I \ = \ \int\limits_{\lquot{T_F}{T_{\bf A}}} \ \sum_{c\in F^\times} \ \int\limits_{F^\times} \ 
        \varphi_g((zc,z)t)\ \chi(\det(tg))\ \norm{\det(tg)}^{s+1/2} \,dz\,dt \;. $$ 
 We now replace $c$ by $cz^{-1}$, replace the sum by the integral over the discrete space $F^\times$ and use 
 $$ \begin{array}{ccc}
     \lquot{T_F}{T_{\bf A}} &\simeq& (\lquot{F^\times}{{\AA}^\times}) \times (\lquot{F^\times}{{\AA}^\times}) \;. \\
     t &\mapsto& (t_1,t_2) 
	 \end{array} $$
 Then $I$ equals
 \begin{multline*} 
     \hspace{-8pt} \chi(\det g)\ \norm{\det g}^{s+1/2} \hspace{-8pt} \int\limits_{\lquot{F^\times}{{\bf A}^\times}} \ \int\limits_{\lquot{F^\times}{{\bf A}^\times}} \ \int\limits_{F^\times} \ \int\limits_{F^\times} \varphi_g(ct_1,at_2)\ \chi(t_1t_2)\ \norm{t_1t_2}^{s+1/2} \,da\,dc\,dt_1\,dt_2 \\ 
     \displaystyle = \ \chi(\det g)\ \norm{\det g}^{s+1/2} \int\limits_{{\bf A}^\times}\ \biggl(\ \ \int\limits_{{\bf A}^\times} \ \varphi_g(t_1,t_2 )\ \chi(t_1)\ \norm{t_1}^{s+1/2} \,dt_1\ \biggr)\ \chi(t_2)\ \norm{t_2}^{s+1/2} \,dt_2 \;.
 \end{multline*}

 Let $U\subset\AA^2$ be the compact domain of $\varphi_g$. Then $\bigl\{t_1\in\AA\bigl|\bigl(\{t_1\}\times\AA\bigr)\cap U\neq\emptyset\bigr\}$ is compact. For every $t_2$, the function $t_1\to\varphi_g(t_1,t_2)$ is locally constant on $\AA\times\{t_2\}\subset\AA\times\AA$ endowed with the subspace topology. Consequently, $\varphi_g(\blanc,t_2)$ is a Schwartz-Bruhat function for every $t_2$ and the expression in brackets that we see in the last equation is a Tate integral, which equals a multiple of $L(\chi,s+1/2)$ (cf.\ paragraph \ref{Euler_prod_Tate_integral}). Denote the factor by $\tilde \varphi_g(t_2)$. For the same reasons as before, but with the roles of $t_1$ and $t_2$ reversed, we see that $\varphi_g(t_1,\blanc)$ is a Schwartz-Bruhat function for every $t_1$. Hence the value of the Tate integral is locally constant in $t_2$ and vanishes at all $t_2$ outside a compact set. Since $L(\chi,s+1/2)$ does not depend on $t_2$, the factor $\tilde \varphi_g$ is locally constant and compact support. Hence $\tilde \varphi_g:\AA\to\CC$ is a Schwartz-Bruhat function. Substituting the Tate integral in the last equation by $\tilde \varphi_g(t_2)L(\chi,s+1/2)$ yields
 $$ I \ = \ \chi(\det g)\ \norm{\det g}^{s+1/2} L(\chi,s+1/2) \int\limits_{{\AA}^\times} \tilde \varphi_g(t_2) \chi(\det g) \norm{t_2}^{s+1/2} \,dt_2 \;, $$
 where we see again a Tate integral, which equals a multiple of $L(\chi,s+1/2)$. 
 
 We end up with the right hand side of the equation in \eqref{diag1} if $\tilde e_T(g,\varphi,\chi,s)$ is suitably defined. In particular, the left hand side is a well-defined and converging expression, which is meromorphic in $s\in\CC$, and $\tilde e_T(g,\varphi,\chi,s)$ is meromorphic as the quotient of meromorphic functions. Hence \eqref{diag1} holds.
 
 There is a Schwartz-Bruhat function $\psi:\AA\to\CC$ such that we have $L(\psi,\chi,s+1/2) = L(\chi,s+1/2)$ (cf.\ paragraph \ref{Euler_prod_Tate_integral}). If we define $\varphi:\AA^2\to\CC$ to be the Schwartz-Bruhat function such that $\varphi_g(t_1,t_2)=\psi(t_1)\cdot\psi(t_2)$. Then $\tilde e_T(g,\varphi,\chi,s)=\chi(\det g)\norm{\det g}^{s+1/2}$. If $\chi\in\Xi_0$, then $\varphi_{T,g^{-1}}$ satisfies the equality (cf.\ paragraph \ref{Euler_prod_Tate_integral}). Hence \eqref{diag2} holds by meromorphic continuation. 
 
 The Tate integral $L(\psi,\chi,s+1/2)$ equals a holomorphic multiple of $L(\chi,s+1/2)$ in $s\in\CC$ for any Schwartz-Bruhat function $\psi$ (cf.\ paragraph \ref{Euler_prod_Tate_integral}), thus $\tilde e_T(g,\varphi,\chi,s)$ is holomorphic in $s\in\CC$ for an arbitrary Schwartz-Bruhat function $\varphi$.
\end{proof}

\begin{pg}
 \label{Fourier_dec_of_Eisenstein_series} 
 We state the functional equation for Eisenstein series. For reference, see \cite{Li}. Let $B$ be the standard Borel subgroup and $N$ its unipotent radical. The constant term of $E(g,\varphi,\chi,s)$ is given by
 $$ E_N(g,\varphi,\chi,s) \ = \ f_{\varphi,\chi}(s)(g) \ + \ M_\chi(s)\,f_{\varphi,\chi}(s)(g) $$ 
 with
 $$ M_\chi(s)f_{\varphi,\chi}(g) \ = \ \int\limits_{N_\AA}f_{\varphi,\chi}(\tinymat {} 1 1 b g)\;db \;. $$
 Note that the operator $M_\chi(s)$ is a morphism of $G_\AA$-modules $\cP(\chi\norm \ ^s) \rightarrow \cP(\chi^{-1}\norm \ ^{-s})$, which is defined for all $s$ unless $\chi^2\norm\ ^{2s}=1$.

 \label{factor_of_functional_equation}
 Let $f_{\varphi,\chi}$ be embedded in the flat section $f_{\varphi,\chi}(s)$ and let $M_\chi(0)f_{\varphi,\chi} \in \cP(\chi^{-1})$ be embedded in the flat section $\hat f_{\varphi,\chi^{-1}}(s)$. Then there is a holomorphic function $c(\chi,s)$ in $s\in\CC$ such that
 $$ M_\chi(s)\ f_{\varphi,\chi} \ = \ c(\chi,s) \ \hat f_{\varphi,\chi^{-1}}(-s) \; $$
 for all $\chi\in\Xi$ and $s\in\CC$ unless $\chi^2\norm \ ^{2s}=1$.  If $\chi\in\Xi_0$, then $c(\chi,s)=\chi^2(\fc) \norm \fc^{2s}$.
 \label{functional_equation_eisenstein}
 For $\chi^2\norm \ ^{2s}\neq\norm \ ^{\pm1}$, this yields the {\rm functional equation}
 $$ E\bigr(\blanc,f_{\varphi,\chi}(s)\bigl) \ = \ c(\chi,s)\, E\bigr(\blanc,\hat f_{\varphi,\chi^{-1}}(-s)\bigl). $$

 By paragraph \ref{E_varphi}, there is a Schwartz-Bruhat function $\hat\varphi$ such that 
 $$ \hat f_{\varphi,\chi^{-1}}(s) \ = \ f_{\hat\varphi,\chi^{-1}}(s) \hspace{1cm}\text{and}\hspace{1cm} E(\blanc,\hat\varphi,\chi^{-1},s) \ = \ E(\blanc,\hat f_{\varphi,\chi^{-1}}(s)) \;. $$
 Further recall from paragraph \ref{cont_L_series} the functional equation 
 $$ L(\chi,1/2+s) = \epsilon(\chi,s) L(\chi^{-1},1/2-s) $$
 of $L$-series where $\epsilon(\chi,s)=\chi(\fc)\norm{\fc}^s$ if $\chi$ is unramified.
\end{pg}
 
\begin{pg}
 Let $T\subset G$ be a split torus. Then $T_F$ is given as the image of $\Theta_E: E^\times \to G_F $, where $E=F\oplus F$. We recall the definition of $f_T$ for split tori (see paragraph \ref{def_toroidal_integral}), which is
 $$ f_T(g) \ = \ \int \limits_{\lquot{T_FZ_\AA}{T_\AA}}\Bigl(f-\frac 12(f_N+f_{N^T})\Bigr)\,(tg)\,dt $$
 for $f\in\cA$, where
 $$ f_{N^T}(g) \ = \ \int\limits_{\lquot{N^T_F}{N^T_\AA}} f(ng)\,dn \ = \ \int\limits_{\lquot{N_F}{N_\AA}} f(w_0nw_0 g)\,dn \ = \ f_N(w_0g) \;. $$ 

 As remarked in paragraph \ref{bijection_quadr_ext_conj_cl_of_tori}, there is a $\gamma\in G_F$ such that $T=\gamma^{-1}T_0\gamma$, where $T_0$ is the diagonal torus. We defined $\varphi_{T_0}$ for the diagonal torus $T_0$ in paragraph \ref{varphi_T_for_diagonal_torus}. Define $\varphi_{T}=\varphi_{T_0,\gamma}$. Note that this definition does not depend on $\gamma$ because the only matrices that leave $T_0$ invariant by conjugation are $e$ and $w_0$. But $\varphi_{T_0}(\blanc w_0\gamma)=\varphi_{T_0}(\blanc \gamma)$ by the definition of $\varphi_{T_0}$.
\end{pg}

\begin{thm}
 \label{thm_Zagier_split} 
 Let $T$ be a split torus. For every Schwartz-Bruhat function $\varphi:\AA^2\to\CC$, $g\in G_\AA$ and $\chi\in\Xi$, there exists a holomorphic function $e_T(g,\varphi,\chi,s)$ of $s\in\CC$ with the following properties.
 \begin{enumerate}
  \item\label{split1} For all $s\in\CC$ such that $\chi^2\norm \ ^{2s}\neq\norm \ ^{\pm1}$,
                      $$ \label{split_toroidal_integral_of_Eisenstein_series} E_T(g,\varphi,\chi,s) \ = \ e_T(g,\varphi,\chi,s)\ \bigl(L(\chi,s+1/2)\bigr)^2 \;. $$
  \item\label{split2} If $\chi\in\Xi_0$, then $e_T(e,\varphi_T,\chi,s)=1$ for all $s\in\CC$.
 \end{enumerate}
\end{thm}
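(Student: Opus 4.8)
The plan is to deduce Theorem \ref{thm_Zagier_split} from Lemma \ref{lemma_zagier_diag} by first reducing to the case of the diagonal torus, and then relating the integrand in Lemma \ref{lemma_zagier_diag} to the toroidal integral $f_T(g)$ of Definition \ref{def_toroidal_integral} for $f = E(\blanc,\varphi,\chi,s)$. First I would handle the reduction: for a split torus $T = \gamma^{-1}T_0\gamma$ with $\gamma\in G_F$, paragraph \ref{tor_def_well-defined} gives $E_{T}(g,\varphi,\chi,s) = E_{T_0}(\gamma g,\varphi,\chi,s)$ once one checks that the two Borel subgroups containing $T$ pull back under conjugation by $\gamma$ to the two Borels containing $T_0$ (so that the constant-term corrections $f_N$ and $f_{N^T}$ match up), and that $\varphi_T = \varphi_{T_0,\gamma}$ is exactly the Schwartz--Bruhat function that corresponds to $\varphi_{T_0}$ under this translation; the latter is already observed in the paragraph preceding the theorem. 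Thus it suffices to prove the statement for $T = T_0$ diagonal, $g$ replaced by $\gamma g$.

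Next I would identify the toroidal integral with the left-hand side of Lemma \ref{lemma_zagier_diag}(\ref{diag1}). For $f = E(\blanc,\varphi,\chi,s)$, the Fourier expansion of paragraph \ref{Fourier_dec_of_Eisenstein_series} gives $f_N(g) = f_{\varphi,\chi}(s)(g) + M_\chi(s)f_{\varphi,\chi}(s)(g)$, and $f_{N^{T_0}}(g) = f_N(w_0 g)$ as recorded just before the theorem. Hence
$$ f - \tfrac12\bigl(f_N + f_{N^{T_0}}\bigr) \ = \ E(\blanc,\varphi,\chi,s) - \tfrac12\Bigl(f_{\varphi,\chi}(s) + M_\chi(s)f_{\varphi,\chi}(s)\Bigr) - \tfrac12\Bigl(f_{\varphi,\chi}(s)(w_0\blanc) + M_\chi(s)f_{\varphi,\chi}(s)(w_0\blanc)\Bigr). $$
The point is that integrating this against $dt$ over $\lquot{T_{0,F}Z_\AA}{T_{0,\AA}}$ gives the same value as integrating $E(tg,\varphi,\chi,s) - f_{\varphi,\chi}(s)(tg) - f_{\varphi,\chi}(s)(w_0tg)$: the extra terms $M_\chi(s)f_{\varphi,\chi}(s)$ and its $w_0$-translate are themselves elements of the principal series $\cP(\chi^{-1}\norm\ ^{-s})$ (by paragraph \ref{Fourier_dec_of_Eisenstein_series}), and one checks that the $\tfrac12(f_{\varphi,\chi}+M_\chi f_{\varphi,\chi})$ piece and its $w_0$-conjugate piece together reconstitute precisely $\tfrac12(f_{\varphi,\chi}(s)(tg)+f_{\varphi,\chi}(s)(w_0tg)) + \tfrac12$(the $w_0$-symmetrization of the same), which by the $w_0$-invariance of the measure on the diagonal torus quotient integrates to the same thing as subtracting $f_{\varphi,\chi}(s)(tg) + f_{\varphi,\chi}(s)(w_0tg)$ once. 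Carrying this bookkeeping out carefully, $E_{T_0}(g,\varphi,\chi,s)$ equals the integral $I$ of Lemma \ref{lemma_zagier_diag}, so defining $e_T(g,\varphi,\chi,s) = \tilde e_{T_0}(\gamma g,\varphi,\chi,s)$ gives a holomorphic function of $s$ satisfying (\ref{split1}).

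For part (\ref{split2}): when $\chi\in\Xi_0$, take $g = e$ and $\varphi = \varphi_T = \varphi_{T_0,\gamma}$, so that in the diagonal-torus picture we evaluate $\tilde e_{T_0}$ at the point $\gamma$ with Schwartz--Bruhat function $\varphi_{T_0,\gamma}$; since $\varphi_{T_0,\gamma} = \varphi_{T_0,(\gamma^{-1})^{-1}}$, this is exactly the case $g' = \gamma$, $\varphi' = \varphi_{T_0,g'^{-1}}$ of Lemma \ref{lemma_zagier_diag}(\ref{diag2}), giving $\tilde e_{T_0}(\gamma,\varphi_{T_0,\gamma},\chi,s) = \chi(\det\gamma)\norm{\det\gamma}^{s+1/2}$. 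But $\gamma\in G_F$, so $\det\gamma\in F^\times$ lies in the kernel of $\chi\norm\ ^{s+1/2}$ by the product formula and the $F^\times$-invariance of $\chi$; hence $e_T(e,\varphi_T,\chi,s) = 1$ for all $s\in\CC$.

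The main obstacle I anticipate is the measure-theoretic bookkeeping in the second paragraph: one must be careful that the two unipotent radicals $N$ and $N^{T_0}$ contribute the constant terms $f_N$ and $f_N(w_0\blanc)$ with the correct normalizations (the $\vol(\lquot{N_F}{N_\AA})^{-1}$ factors and the Haar measure on the split-torus quotient), and that the symmetrization by $w_0$ — which is what the $\tfrac12(f_N + f_{N^T})$ correction effects — interacts correctly with the decomposition of $E_N$ into $f_{\varphi,\chi}$ plus its intertwined image, so that the net subtraction is exactly $f_{\varphi,\chi}(s)(tg) + f_{\varphi,\chi}(s)(w_0 tg)$ as in Lemma \ref{lemma_zagier_diag} and not, say, twice that or half of it. Once this identification is pinned down, everything else is a direct appeal to the lemma and to the vanishing of $\chi\norm\ ^{s+1/2}$ on $F^\times$.
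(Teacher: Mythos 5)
Your reduction to the diagonal torus is fine, but the central step of your argument --- the claim that $E_T(g,\varphi,\chi,s)$ equals the integral $I$ of Lemma \ref{lemma_zagier_diag}, so that one may simply take $e_T=\tilde e_T$ --- has a genuine gap, and it is exactly the point where the paper has to work. The correction term in the split toroidal integral is $\tfrac12(E_N+E_{N^T})$, and by paragraph \ref{Fourier_dec_of_Eisenstein_series} this equals $\tfrac12\bigl(f_{\varphi,\chi}(s)+M_\chi(s)f_{\varphi,\chi}(s)\bigr)$ plus its left $w_0$-translate. Your bookkeeping trades the $M_\chi(s)f_{\varphi,\chi}(s)$-terms for $f_{\varphi,\chi}(s)$-terms by ``$w_0$-invariance of the measure''. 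That is not available: $M_\chi(s)f_{\varphi,\chi}(s)$ lies in the opposite principal series $\cP(\chi^{-1}\norm\ ^{-s})$, so restricted to the diagonal torus it transforms by $\chi^{-1}\norm\ ^{1/2-s}$ rather than $\chi\norm\ ^{1/2+s}$; it is not a $w_0$-translate of $f_{\varphi,\chi}(s)$, and no symmetry of the measure identifies the two. Moreover the individual integrals $\int f_{\varphi,\chi}(s)(tg)\,dt$ and $\int M_\chi(s)f_{\varphi,\chi}(s)(tg)\,dt$ over $\lquot{T_FZ_\AA}{T_\AA}$ diverge, so the comparison cannot even be made term by term; only combinations of the shape ``Eisenstein series minus the two relevant flat-section terms'' converge, which is precisely what Lemma \ref{lemma_zagier_diag} controls.

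The paper's proof handles exactly this: it writes $2\bigl(E-\tfrac12(E_N+E_{N^T})\bigr)$ as the sum of the convergent group $E(tg,\varphi,\chi,s)-f_{\varphi,\chi}(s)(tg)-f_{\varphi,\chi}(s)(w_0tg)$ and a second convergent group produced from the $M_\chi(s)$-terms via $M_\chi(s)f_{\varphi,\chi}=c(\chi,s)\,\hat f_{\varphi,\chi^{-1}}(-s)$ together with the functional equation $E\bigl(\blanc,f_{\varphi,\chi}(s)\bigr)=c(\chi,s)\,E\bigl(\blanc,\hat f_{\varphi,\chi^{-1}}(-s)\bigr)$, applies Lemma \ref{lemma_zagier_diag} to each group, and then uses the functional equation of the $L$-series. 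The outcome is $e_T(g,\varphi,\chi,s)=\tfrac12\,\tilde e_T(g,\varphi,\chi,s)+\tfrac12\,\epsilon(\chi,s)^{-2}c(\chi,s)\,\tilde e_T(g,\hat\varphi,\chi^{-1},-s)$, which is not $\tilde e_T(g,\varphi,\chi,s)$ in general; the identity you assert would amount to the nontrivial claim that the two summands agree for every $\varphi$ and $g$, which you neither prove nor can obtain from measure invariance. The gap propagates to your part (ii): the correct value $1$ arises because each of the two summands contributes $\tfrac12$ (using $c(\chi,0)=\chi^2(\fc)$ and $\epsilon(\chi,0)=\chi(\fc)$ for unramified $\chi$), not because $e_T=\tilde e_T$. (A minor further point: your appeal to Lemma \ref{lemma_zagier_diag}(ii) at the point $\gamma$ with the function $\varphi_{T_0,\gamma}$ has a $\gamma$ versus $\gamma^{-1}$ mismatch, since the lemma asks for $\varphi_{T_0,\gamma^{-1}}$ at the point $\gamma$; but this is cosmetic compared with the structural issue above.)
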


\begin{proof}
 First, let $T$ be the diagonal torus. Let $\chi\in\Xi_0$ and $s\in\CC$ such that $\chi^2\norm \ ^{2s}\neq\norm \ ^{\pm1}$. We calculate:
 $$ \begin{array}{l}
  \displaystyle 2\,E_T(g,\varphi,\chi,s) \vspace{5pt}\\ 
  \displaystyle = \ \int \limits_{\lquot{T_FZ_\AA}{T_\AA}}\bigl(2\,E(tg,\varphi,\chi,s) \ - \ E_N(tg,\varphi,\chi,s) \ - \ E_{N^T}(tg,\varphi,\chi,s)\bigr)\,dt \vspace{5pt}\\ 
  \displaystyle = \ \int \limits_{\lquot{T_FZ_\AA}{T_\AA}}\Bigl(2\,E(tg,\varphi,\chi,s) \ - \ f_{\varphi,\chi}(s)(tg) \ - \ M_\chi(s)\,f_{\varphi,\chi}(s)(tg) \vspace{-8pt}\\ 
  \displaystyle     \hspace{5,15cm} - \ f_{\varphi,\chi}(s)(w_0tg) \ - \ M_\chi(s)\,f_{\varphi,\chi}(s)(w_0tg)\Bigr) \,dt \vspace{3pt}\\ 
  \displaystyle = \ \int \limits_{\lquot{T_FZ_\AA}{T_\AA}} \Bigl(\bigl(E(tg,\varphi,\chi,s) \ - \ f_{\varphi,\chi}(s)(tg)\ - \ f_{\varphi,\chi}(s)(w_0tg)\bigr) \\  
  \displaystyle     \hspace{1cm} + \ c(\chi,s)\ \bigl(E(tg,\hat\varphi,\chi^{-1},-s) \ - \ f_{\hat\varphi,\chi^{-1}}(-s)(tg) \ - \ f_{\hat\varphi,\chi^{-1}}(-s)(w_0tg)\bigr)\Bigr)\,dt \;, \vspace{-5pt}
 \end{array}$$
 where we applied the formulas of the previous paragraph and the functional equation (paragraph \ref{functional_equation_eisenstein}). By Lemma \ref{lemma_zagier_diag}, we can split the last integral into two and obtain:
 $$ \tilde e_T(g,\varphi,\chi,s)\ \bigl(L(\chi,s+1/2)\bigr)^2 \ + \ c(\chi,s)\ \tilde e_T(g,\hat\varphi,\chi^{-1},-s)\ \bigl(L(\chi^{-1},-s+1/2)\bigr)^2 \;. $$
 We apply the functional equation to $L(\chi^{-1},-s+1/2)$ and obtain \eqref{split1} for the diagonal torus if we put
 $$ e_T(g,\varphi,\chi,s) \ = \ \frac 12\ \tilde e_T(g,\varphi,\chi,s)\ + \ \frac 12\ \epsilon(\chi,s)^{-2}\ c(\chi,s)\ \tilde e_T(g,\hat\varphi,\chi^{-1},-s) \;. $$
 This defines $e_T(g,\varphi,\chi,s)$ as a holomorphic function of $s\in\CC$ since $\epsilon(\chi,s)$ is non-vanishing as a function at $s$.
	 
 If $T$ is any split torus, define $e_T(g,\varphi,\chi,s)=e_{T_0}(\gamma g,\varphi,\chi,s)$. Since all split tori in $G$ are conjugated, there is a $\gamma\in G_F$ such that $T=\gamma T_0 \gamma^{-1}$, where $T_0$ is the diagonal torus. Recall from paragraph \ref{tor_def_well-defined} that $f_{T}(g) = f_{T_0}(\gamma g)$. This reduces the case of the general split torus to the case of the diagonal torus. Thus \eqref{split1} holds.

 Regarding \eqref{split2}, let $\chi\in\Xi_0$ and $s\in\CC$ be such that $\chi^2\norm \ ^{2s}\neq\norm \ ^{\pm1}$. Since we may replace $\chi$ by $\chi\norm \ ^s$, we assume that $s=0$ without loss of generality. Recall from paragraph \ref{E_varphi} that $E(\blanc,\varphi_0,\chi)=E(\blanc,\chi)$. Put $f_\chi=f_{\varphi_0,\chi}(0)\in\cP(\chi)$ and $f_{\chi^{-1}}=f_{\varphi_0,\chi^{-1}}(0)\in\cP(\chi^{-1})$. By paragraph \ref{Fourier_dec_of_Eisenstein_series}, we have
 $$ E_N(g,\chi) \, = \, f_\chi(g)+M_\chi(0)f_\chi(g) \quad\text{and}\quad E_N(g,\chi^{-1}) \, = \, f_{\chi^{-1}}(g)+M_{\chi^{-1}}(0)f_{\chi^{-1}}(g) \;, $$
 where $N$ is the unipotent radical of the standard Borel subgroup.
 
 Observe that for $T=\gamma^{-1} T_0\gamma$, we have $e_T(e,\varphi_T,\chi,s)=e_{T_0}(\gamma,\varphi_{T_0,\gamma^{-1}},\chi,s)$. As in the proof of \eqref{split1}, we may restrict to the diagonal torus $T=T_0$ without loss of generality. We follow the lines of the calculation in the proof of \eqref{split1}, where we make use of the functional equation for $E(\blanc,\chi)$ (paragraph \ref{functional_equation_eisenstein}), the functional equation for $L(\chi,1/2)$ (paragraph \ref{cont_L_series}) and Lemma \ref{lemma_zagier_diag} \eqref{diag2}:
 \begin{align*}
  2\,E_T(e,\chi) \ &= \ \int \limits_{\lquot{T_FZ_\AA}{T_\AA}}\bigl(2\,E(t,\chi) \ - \ E_N(t,\chi) \ - \ E_{N^T}(t,\chi)\bigr)\,dt \\
  &= \ \int \limits_{\lquot{T_FZ_\AA}{T_\AA}} \Bigl(\bigl(E(t,\chi) \ - \ f_{\chi}(t)\ - \ f_{\chi}(w_0t)\bigr) \\ 
  &\hspace{2cm} + \ \chi^2(\fc)\ \bigl(E(t,\chi^{-1}) \ - \ f_{\chi^{-1}}(t) \ - \ f_{\chi^{-1}}(w_0t)\bigr)\Bigr)\,dt \;, \\
  &= \ \tilde e_T(e,\varphi_T,\chi,0)\, \bigl(L(\chi,1/2)\bigr)^2 \ + \ \tilde e_T(e,\varphi_T,\chi^{-1},0)\,\chi^2(\fc)\, \bigl(L(\chi^{-1},1/2)\bigr)^2 \\ 
  &= \ \bigl(L(\chi,1/2)\bigr)^2 \ + \ \chi^2(\fc)\,\chi^{-2}(\fc)\,\bigl(L(\chi,1/2)\bigr)^2 \\ 
  &= \ 2\,\bigl(L(\chi,1/2)\bigr)^2 \;.
 \end{align*}
 By holomorphic continuation, we find $e_T(e,\varphi_0,\chi,s)=1$ for all $s\in\CC$.
\end{proof}

 For any Schwartz-Bruhat function $\varphi:\AA^2\to\CC$ and any $g\in G_\AA$ we have that the automorphic form $g.E(\blanc,\varphi,\chi)$ is an element of $\cP(\chi)$ (paragraph \ref{E_varphi}). By the definition of $F\oplus F$-toroidality, we obtain as an immediate consequence:

\begin{cor}
 Let $\chi\in\Xi_0$ such that $\chi^2\neq\norm \ ^{\pm1}$ and let $\varphi:\AA^2\to\CC$ be a Schwartz-Bruhat function. Then $E(\blanc,\varphi,\chi)$ is $F\oplus F$-toroidal if and only if $L(\chi,1/2)=0$. \qed
\end{cor}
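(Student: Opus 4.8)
The plan is to read off both directions from Theorem~\ref{thm_Zagier_split}, using additionally that $\cA_\tor(F\oplus F)$ is a subrepresentation of $\cA$. First I would evaluate Theorem~\ref{thm_Zagier_split}\eqref{split1} at $s=0$; this is legitimate because the hypothesis $\chi^2\neq\norm\ ^{\pm1}$ keeps $E(\blanc,\varphi,\chi,s)$ regular at $s=0$, so that for any split torus $T$ and any $g\in G_\AA$ one obtains
$$ E_T(g,\varphi,\chi) \ = \ e_T(g,\varphi,\chi,0)\,\bigl(L(\chi,1/2)\bigr)^2 . $$
If $L(\chi,1/2)=0$ the right-hand side vanishes for every $g$, so every toroidal integral of $E(\blanc,\varphi,\chi)$ is zero and $E(\blanc,\varphi,\chi)$ is $F\oplus F$-toroidal; this settles the ``if'' direction.

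For the converse I would argue by contradiction: assume $L(\chi,1/2)\neq0$ and that $E(\blanc,\varphi,\chi)$ (which we may take to be non-trivial) is $F\oplus F$-toroidal. By paragraph~\ref{toroidal_form_def_by_H}, $\cA_\tor(F\oplus F)$ is an $\cH$-submodule, hence a subrepresentation, of $\cA$, so it contains the whole $G_\AA$-submodule generated by $E(\blanc,\varphi,\chi)$. By the remark preceding the corollary together with the $\cH$-equivariance of the Eisenstein map $f\mapsto E(\blanc,f)$, every translate $g.E(\blanc,\varphi,\chi)$ lies in the image $\overline{\cP(\chi)}$ of $\cP(\chi)$ in $\cA$; since $\chi^2\neq\norm\ ^{\pm1}$ makes $\cP(\chi)$ irreducible (paragraph~\ref{principal_series_is_irreducibel}), its nonzero image $\overline{\cP(\chi)}$ is irreducible, so the submodule generated by the non-trivial form $E(\blanc,\varphi,\chi)$ is all of $\overline{\cP(\chi)}$. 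In particular $\overline{\cP(\chi)}$ contains $E(\blanc,\varphi_0,\chi)=E(\blanc,\chi)$ (paragraph~\ref{E_varphi}), so $E(\blanc,\chi)$ would be $F\oplus F$-toroidal as well. But Theorem~\ref{thm_Zagier_split}\eqref{split2} gives, for the diagonal torus $T_0$, that $E_{T_0}(e,\varphi_0,\chi)=e_{T_0}(e,\varphi_0,\chi,0)\,(L(\chi,1/2))^2=(L(\chi,1/2))^2\neq0$, so $E(\blanc,\chi)$ is not $F\oplus F$-toroidal --- a contradiction.

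I expect the only real point of care to be this reduction from an arbitrary Schwartz-Bruhat function $\varphi$ to the distinguished $\varphi_0$: the bookkeeping that $\cA_\tor(F\oplus F)$ is $G_\AA$-stable and that $\cP(\chi)$ is irreducible, so that non-vanishing of $e_{T}(\blanc,\blanc,\chi,0)$ for the single pair $(T_0,\varphi_0)$ supplied by Theorem~\ref{thm_Zagier_split}\eqref{split2} propagates to all of $\overline{\cP(\chi)}$. Everything else is a direct substitution into the formula of Theorem~\ref{thm_Zagier_split}.
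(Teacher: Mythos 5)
Your proof is correct and is essentially the paper's own argument: the ``if'' direction is Theorem \ref{thm_Zagier_split}\,\eqref{split1} evaluated at $s=0$, and the ``only if'' direction is exactly the reduction the paper hints at in the sentence preceding the corollary --- translates of $E(\blanc,\varphi,\chi)$ lie in the image of the irreducible module $\cP(\chi)$ (irreducible because $\chi^2\neq\norm \ ^{\pm1}$), and $\cA_\tor(F\oplus F)$ is $\cH$-stable, so toroidality propagates to $E(\blanc,\varphi_{T},\chi)=E(\blanc,\chi)$, where Theorem \ref{thm_Zagier_split}\,\eqref{split2} forces $L(\chi,1/2)=0$. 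Your parenthetical non-triviality assumption on $E(\blanc,\varphi,\chi)$ is the same implicit convention the paper uses, so there is no gap relative to the paper's proof.
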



\section{Toroidal derivatives of Eisenstein series}
\label{section_derivatives}

\begin{pg}
 \label{def_derivatives}
  Fix an $i\geq0$, a Schwartz-Bruhat function $\varphi:\AA^2\to\CC$ and a $\chi\in\Xi$ such that $\chi^2\neq\norm\ ^{\pm1}$. We define $E^{(i)}(g,\varphi,\chi,s)$ as the \emph{$i$-th derivative of $E(g,\varphi,\chi,s)$ with respect to $s$}. The function $E^{(i)}(\cdot,\varphi,\chi,s)$ is an automorphic form.  We denote by $L^{(i)}(\chi,s)$ the \emph{$i$-th derivative of $L(\chi,s)$ with respect to $s$}. 

 \label{def_chi_T}
 Let $T\subset G$ be a maximal torus defined by $\Theta_E: E^\times\to G_F$. If $E$ is a field, then the reciprocity map (cf.\ paragraph \ref{class_field_theory}) assigns to the nontrivial character of $\Gal(E/F)$ a character of $\AA_F^\times$, which we denote by $\chi_T=\chi_E$. This character is of order two and its kernel is precisely $\N_{E/F}(\AA_E^\times)$. Further
 $$ L_E(\chi\circ\N_{E/F},s) \ = \ L_F(\chi,s)\ L_F(\chi\chi_T,s) \;. $$
 If $E=F\oplus F$, then we define $\chi_T=\chi_E$ as the trivial character. For every maximal torus $T$ of $G$, we put
 $$ e_T^{(i)}(g,\varphi,\chi) \ := \ \frac{d^i}{ds^i} \, e_T(g,\varphi,\chi,s)\Bigr|_{s=0} \;. $$
\end{pg}

\begin{thm}
 \label{lemma_zagier_for_derivatives}
 Let $T$ be a maximal torus in $G$ and $n$ a positive integer. For all $g\in G_\AA$ and $\chi\in\Xi_0$ such that $\chi^2\neq\norm \ ^{\pm1}$,
 $$ E_T^{(n)}(g,\varphi,\chi) \ = \ \sum_{\substack{i+j+k=n\\i,j,k\geq0}} \frac{n!}{i!\,j!\,k!}\ e_T^{(i)}(g,\varphi,\chi)\ L^{(j)}(\chi,1/2)\ L^{(k)}(\chi\chi_T,1/2) \;. $$
 In particular, $E^{(n-1)}(\blanc,\varphi,\chi)$ is toroidal if $L(\chi,s+1/2)$ vanishes in $s=0$ to order at least $n$.
\end{thm}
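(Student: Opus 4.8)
The plan is to reduce the statement about the toroidal integral of the $n$-th derivative of an Eisenstein series to the product formula already established in Theorems \ref{thm_Zagier_anis} and \ref{thm_Zagier_split}, and then differentiate $n$ times using the Leibniz rule. First I would recall that, by paragraph \ref{def_chi_T}, in both cases (non-split $T$ corresponding to a quadratic field extension $E/F$, and the split torus $E=F\oplus F$) the relevant $L$-factor appearing in the theorems factors as
$$ L_E(\chi\circ\N_{E/F},s+1/2) \ = \ L_F(\chi,s+1/2)\ L_F(\chi\chi_T,s+1/2) $$
when $T$ is non-split, and as $\bigl(L(\chi,s+1/2)\bigr)^2 = L(\chi,s+1/2)L(\chi\chi_T,s+1/2)$ when $T$ is split (since $\chi_T$ is trivial there). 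Hence in all cases
$$ E_T(g,\varphi,\chi,s) \ = \ e_T(g,\varphi,\chi,s)\ L(\chi,s+1/2)\ L(\chi\chi_T,s+1/2) $$
as an identity of meromorphic functions of $s$, valid away from the finitely many $s$ with $\chi^2\norm\ ^{2s}=\norm\ ^{\pm1}$; under the hypothesis $\chi^2\neq\norm\ ^{\pm1}$ there is a neighbourhood of $s=0$ on which all three factors are holomorphic (using paragraphs \ref{cont_L_series} and \ref{continuation_eisenstein}, the potential pole of $E$ at $s=0$ not occurring, and $e_T$ being holomorphic by the cited theorems).

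Next I would justify that differentiating the toroidal integral $n$ times in $s$ commutes with the integral defining $f_T$, so that $E_T^{(n)}(g,\varphi,\chi)$ equals the $n$-th $s$-derivative at $s=0$ of $E_T(g,\varphi,\chi,s)$. For the non-split torus this is clear because the domain $\lquot{T_FZ_\AA}{T_\AA}$ is compact (paragraph \ref{anisotropic_torus_is_compact}) and $E(tg,\varphi,\chi,s)$ is jointly continuous and holomorphic in $s$ near $0$, so one may differentiate under the integral sign. For the split torus one uses instead the regularised integrand $E - \tfrac12(f_N+f_{N^T})$, which by approximation by constant terms (paragraphs \ref{cusp_forms}, \ref{def_toroidal_integral}) has compact support on $\lquot{B_FZ_\AA}{G_\AA}$ uniformly for $s$ in a compact neighbourhood of $0$; the dominated-convergence/Leibniz argument then goes through. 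With this in hand, applying the Leibniz rule for the $n$-th derivative of a threefold product to
$$ E_T(g,\varphi,\chi,s) \ = \ e_T(g,\varphi,\chi,s)\ L(\chi,s+1/2)\ L(\chi\chi_T,s+1/2) $$
and evaluating at $s=0$ yields exactly
$$ E_T^{(n)}(g,\varphi,\chi) \ = \ \sum_{\substack{i+j+k=n\\i,j,k\geq0}} \frac{n!}{i!\,j!\,k!}\ e_T^{(i)}(g,\varphi,\chi)\ L^{(j)}(\chi,1/2)\ L^{(k)}(\chi\chi_T,1/2)\,, $$
recalling that $\tfrac{d^k}{ds^k}L(\chi\chi_T,s+1/2)|_{s=0}=L^{(k)}(\chi\chi_T,1/2)$ and likewise for the other factor and for $e_T$.

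For the final clause, suppose $L(\chi,s+1/2)$ vanishes at $s=0$ to order at least $n$, i.e.\ $L^{(j)}(\chi,1/2)=0$ for $0\le j\le n-1$. Replacing $n$ by $n-1$ in the formula just proved, every term in the sum for $E_T^{(n-1)}(g,\varphi,\chi)$ contains a factor $L^{(j)}(\chi,1/2)$ with $j\le n-1$, hence vanishes; so $E_T^{(n-1)}(g,\varphi,\chi)=0$ for all $g$ and all maximal tori $T$. Since, by Proposition \ref{def_varphi_0} and paragraph \ref{E_varphi}, every Eisenstein series of weight $0$ and Hecke character $\chi$ (and its derivatives) is of the form $E(\blanc,\varphi,\chi)$ for some Schwartz-Bruhat $\varphi$, and since $G_\AA$-translates preserve this class, this shows $E^{(n-1)}(\blanc,\varphi,\chi)$ is $E$-toroidal for every separable quadratic algebra extension $E/F$, i.e.\ toroidal. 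The main obstacle I anticipate is the interchange of differentiation and integration in the split case: one must check that the bound on the support of $E-\tfrac12(f_N+f_{N^T})$ provided by approximation by constant terms can be taken locally uniform in $s$, which requires tracking how the constant terms $f_{\varphi,\chi}(s)$ and $M_\chi(s)f_{\varphi,\chi}(s)$ depend holomorphically on $s$; everything else is a direct application of the two Zagier-type theorems and the Leibniz rule.
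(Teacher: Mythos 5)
Your proposal is correct and follows essentially the same route as the paper: it factors the $L$-factor via $\chi_T$ as in paragraph \ref{def_chi_T}, invokes Theorems \ref{thm_Zagier_anis} and \ref{thm_Zagier_split}, justifies interchanging $\frac{d}{ds}$ with the toroidal integral by compactness (resp.\ compact support of the regularised integrand in the split case), and applies the Leibniz rule. The paper's own proof is just a compressed version of exactly this argument, so nothing further is needed.
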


\begin{proof}
 Observe that in both the case of a non-split torus and the case of a split torus, we are taking integrals over functions with compact support, so the derivatives with respect to $s$ commute with the integrals. Everything follows at once from applying the Leibniz rule to the formulas in Theorems \ref{thm_Zagier_anis} and \ref{thm_Zagier_split}.
\end{proof}


\section{Toroidal residues of Eisenstein series}
\label{section_residues}

\begin{pg}
 \label{def_residues}
 In this section, we prove that residues of Eisenstein series are not toroidal. Let $\chi\in\Xi$ with $\chi^2=\norm \ ^{\pm1}$, $f\in\cP(\chi)$, and $g\in G_\AA$. Then $E(g,f,s)$ as a function of $s$ has a pole at $s=0$, which is order of $1$. Thus the Eisenstein series has a nontrivial residue
 $$ R(g,f) \ := \ \Res_{s=0} \ E(g,f,s) \ =  \ \lim_{s\to0}\ s\cdot E(g,f,s), $$
 which is an automorphic form in $g$. Define 
 $$ R(\blanc,\chi)=\Res_{s=0}\ E(\blanc,\chi,s) $$
 if $\chi$ is unramified. The functional equation has a natural extension to residues of Eisenstein series. In particular, for unramified $\chi$, it becomes
 $$ R(\blanc,\chi)\ =\ \chi^2(\fc)\,R(\blanc,\chi^{-1}). $$
 Let $\varphi:\AA^2\to\CC$ be a Schwartz-Bruhat function. Then one can also define 
 $$ R(\blanc,\varphi,\chi)=\Res_{s=0}\ E(\blanc,\varphi,\chi). $$
 From the result for Eisenstein series, one obtains that for every $\varphi$, there is a $f\in\cP(\chi)$ such that $R(\blanc,\varphi,\chi)=R(\blanc,f)$, and vice versa.

 \label{residue=character_of_det}
 \label{residues_form_1-dim_representations}
 It turns out (see \cite[Thm.\ 4.19]{Gelbart-Jacquet}) that the residues are functions of a particular simple form. Let $\chi=\omega\norm \ ^{\pm1/2}$ be a quasi-character with $\omega^2=1$ and $f\in\cP(\chi)$, then
 $$R(g,f) \ = \ R(e,f)\cdot\omega\bigl(\det(g)\bigr)$$ 
 as functions of $g\in G_\AA$. This means that the $\cH$-submodule $\{R(\blanc,f)\}_{f\in\cP(\chi)}\subset\cA$ is $1$-dimensional.

 If $\chi^2=\norm \ ^{\pm1}$, we define for every $i\geq 0$
 $$ R^{(i)}(g,\varphi,\chi) \ = \ \lim_{s\to0}\ \frac{d^i}{ds^i}\ s\cdot E(g,\varphi,\chi,s), $$
 which defines an automorphic form in $g$.
\end{pg}

\begin{lemma}
 \label{toroidal_residues_anis}
 Let $T$ be a non-split torus corresponding to a separable quadratic field extension $E$ of $F$ and $\chi\in\Xi$ with $\chi^2=\norm\ ^{\pm1}$. For every Schwartz-Bruhat function $\varphi:\AA^2\to\CC$ and $g\in G_\AA$,
 $$ R_T(g,\varphi,\chi) \ = \ e_T(g,\varphi,\chi,0)\ \Res_{s=0}\ L_E(\chi\circ\N_{E/F},s+1/2) \;. $$
\end{lemma}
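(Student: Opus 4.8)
The plan is to mimic the proof of Theorem~\ref{thm_Zagier_anis}\,\eqref{anis1}, but now take residues on both sides rather than comparing meromorphic functions at a regular point. Recall from Theorem~\ref{thm_Zagier_anis}\,\eqref{anis1} that for all $s$ with $\chi^2\norm\ ^{2s}\neq\norm\ ^{\pm1}$ one has the identity of meromorphic functions
$$
E_T(g,\varphi,\chi,s) \ = \ e_T(g,\varphi,\chi,s)\ L_E(\chi\circ\N_{E/F},s+1/2),
$$
where $e_T(g,\varphi,\chi,s)$ is \emph{holomorphic} in $s$ on all of $\CC$. First I would note that when $\chi^2=\norm\ ^{\pm1}$, the factor $L_E(\chi\circ\N_{E/F},s+1/2)=L_F(\chi,s+1/2)L_F(\chi\chi_T,s+1/2)$ has exactly a simple pole at $s=0$ (one of the two factors is a principal $L$-series with a simple pole there, the other is regular and nonzero at that point by paragraph~\ref{L_series_has_no_zero_in_0,1}), so $E_T(g,\varphi,\chi,s)$ also has at worst a simple pole at $s=0$. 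Hence $R_T(g,\varphi,\chi)$, defined as the residue at $s=0$ of $E_T(g,\varphi,\chi,s)$ (equivalently, the toroidal integral of the residual automorphic form $R(\blanc,\varphi,\chi)$), makes sense.

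The second step is to justify that taking the toroidal integral commutes with taking the residue at $s=0$, i.e.\ that
$$
R_T(g,\varphi,\chi) \ = \ \Res_{s=0}\ E_T(g,\varphi,\chi,s).
$$
Since $T$ is non-split, $\lquot{T_FZ_\AA}{T_\AA}$ is compact (paragraph~\ref{anisotropic_torus_is_compact}), so $f_T(g)=\int_{\lquot{T_FZ_\AA}{T_\AA}}f(tg)\,dt$ is a continuous linear functional on automorphic forms that commutes with uniform limits; applying it to $s\cdot E(\blanc,\varphi,\chi,s)$ and letting $s\to0$ gives exactly $\Res_{s=0}E_T(g,\varphi,\chi,s)=\lim_{s\to0}s\cdot E_T(g,\varphi,\chi,s)$. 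Then, multiplying the displayed factorization by $s$ and letting $s\to0$, and using that $e_T(g,\varphi,\chi,s)$ is holomorphic (so $e_T(g,\varphi,\chi,0)=\lim_{s\to0}e_T(g,\varphi,\chi,s)$), we get
$$
R_T(g,\varphi,\chi) \ = \ \lim_{s\to0}\ s\cdot e_T(g,\varphi,\chi,s)\ L_E(\chi\circ\N_{E/F},s+1/2) \ = \ e_T(g,\varphi,\chi,0)\ \Res_{s=0}\ L_E(\chi\circ\N_{E/F},s+1/2),
$$
which is the claim.

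The only genuinely delicate point is the interchange of the residue with the toroidal integral; everything else is a direct consequence of Theorem~\ref{thm_Zagier_anis}. I expect this interchange to be the main obstacle only in the bookkeeping sense: one must be careful that the region of convergence $\Re s>1/2-\Re\chi$ of the original series expression does not include $s=0$, so the argument cannot be run inside the domain of convergence but must instead be run via the meromorphic continuation, treating $s\cdot E_T(g,\varphi,\chi,s)$ as a holomorphic function near $s=0$ to which the continuity of $f_T$ can be applied — and this is legitimate precisely because $E(\blanc,\varphi,\chi,s)$ is, for each fixed $s$ near $0$ with $s\neq0$, an honest automorphic form, and $s\cdot E(\blanc,\varphi,\chi,s)$ extends continuously (indeed holomorphically) to $s=0$ with value the residual automorphic form $R(\blanc,\varphi,\chi)$.
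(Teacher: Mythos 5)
Your proposal is correct and is essentially the paper's own proof: the paper likewise writes $R_T(g,\varphi,\chi)=\lim_{s\to0}s\,E_T(g,\varphi,\chi,s)$ (the interchange of the toroidal integral with the limit, which you justify via compactness of $\lquot{T_FZ_\AA}{T_\AA}$, is left implicit there) and then applies Theorem \ref{thm_Zagier_anis}\,\eqref{anis1} together with the holomorphy of $e_T(g,\varphi,\chi,s)$. One small inaccuracy that does not affect the argument: $L_E(\chi\circ\N_{E/F},s+1/2)$ need not have a pole at $s=0$ at all (writing $\chi=\omega\norm\ ^{\pm1/2}$, it does so only when $\omega=1$ or $\omega=\chi_T$, cf.\ Lemma \ref{toroidal_residues_anis_vanish}), but your weaker ``at worst a simple pole'' is all that is actually used.
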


\begin{proof}
 With help of Theorem \ref{thm_Zagier_anis}, we calculate  
 \begin{align*}
  R_T(g,\varphi,\chi) &= \lim_{s\to0}\ s\ E_T(g,\varphi,\chi,s) \\
    &= \lim_{s\to0}\ s\ e_T(g,\varphi,\chi,s)\ L_E(\chi\circ\N_{E/F},s+1/2) \\
    &= e_T(g,\varphi,\chi,0)\ \Res_{s=0}\ L_E(\chi\circ\N_{E/F},s+1/2) \;.\mbox{\qedhere}
 \end{align*}
\end{proof}

\begin{lemma}
 \label{toroidal_residues_anis_vanish}
 Let $T$ be a non-split torus and $\chi=\omega\norm \ ^{\pm1/2}\in\Xi$ with $\omega^2=1$. There is a Schwartz-Bruhat function $\varphi$ such that $R_T(e,\varphi,\chi)\neq0$ if and only if $\omega=1$ or $\omega=\chi_T$.
\end{lemma}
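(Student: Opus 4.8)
The plan is to feed the formula of Lemma \ref{toroidal_residues_anis} and the factorisation of paragraph \ref{def_chi_T} into a bookkeeping of poles of Euler products. By Lemma \ref{toroidal_residues_anis},
$$ R_T(e,\varphi,\chi) \ = \ e_T(e,\varphi,\chi,0)\ \Res_{s=0}\, L_E(\chi\circ\N_{E/F},s+1/2) \;, $$
and since $T$ is non-split, paragraph \ref{def_chi_T} gives $L_E(\chi\circ\N_{E/F},s+1/2)=L_F(\chi,s+1/2)\,L_F(\chi\chi_T,s+1/2)$ with $\chi_T$ of order two. So the question splits into two: first, for which $\omega$ the residue $\Res_{s=0}\,\bigl(L_F(\chi,s+1/2)L_F(\chi\chi_T,s+1/2)\bigr)$ is nonzero, and second, whether one can then arrange $e_T(e,\varphi,\chi,0)\neq0$.

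For the first point I would use paragraph \ref{cont_L_series}: $L_F(\eta,s)$ has a pole at $s$ precisely when $\eta\norm \ ^s\in\{\norm \ ^0,\norm \ ^1\}$, and such poles are simple. Writing $\chi=\omega\norm \ ^{\pm1/2}$ with $\omega^2=1$, a direct check of the two possibilities for the sign shows that $L_F(\chi,s+1/2)$ has a simple pole at $s=0$ exactly when $\omega=1$, and — replacing $\omega$ by $\omega\chi_T$ and using $\chi_T^2=1$ — that $L_F(\chi\chi_T,s+1/2)$ has a simple pole at $s=0$ exactly when $\omega=\chi_T$. Because $T$ is non-split, $\chi_T$ has order two, hence $\chi_T\neq1$, so these two conditions are mutually exclusive: at most one factor is singular at $s=0$. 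Consequently, if $\omega\notin\{1,\chi_T\}$ the product is holomorphic at $s=0$, its residue vanishes, and $R_T(e,\varphi,\chi)=0$ for every Schwartz-Bruhat $\varphi$.

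If instead $\omega\in\{1,\chi_T\}$, then exactly one of the two factors, say $L_F(\eta_0,s+1/2)$, is singular at $s=0$, and the residue of the product equals $\bigl(\Res_{s=0}L_F(\eta_0,s+1/2)\bigr)\cdot L_F(\eta_1,1/2)$, where $\eta_1$ is the other character. In each of the four sub-cases one computes $L_F(\eta_1,1/2)\in\{L_F(\chi_T,0),L_F(\chi_T,1)\}$, which is nonzero by paragraph \ref{L_series_has_no_zero_in_0,1} since $\chi_T$ is of finite order and not of the form $\norm \ ^s$; moreover $\Res_{s=0}L_F(\eta_0,s+1/2)\neq0$ because the pole is simple. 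Hence $\Res_{s=0}L_E(\chi\circ\N_{E/F},s+1/2)\neq0$. It then remains to choose $\varphi$: by Theorem \ref{thm_Zagier_anis} \eqref{anis2} applied with $g=e$ there is a Schwartz-Bruhat function $\varphi$ with $e_T(e,\varphi,\chi,s)=\chi(\det e)\norm{\det e}^{s+1/2}=1$ for all $s$, and for this $\varphi$ we conclude $R_T(e,\varphi,\chi)\neq0$. I expect no real obstacle; the only delicate point is keeping straight the pole-versus-zero pattern of the two Euler products across the sign of $\pm1/2$ and over the two choices of $\omega$, and appealing to \ref{L_series_has_no_zero_in_0,1} to rule out the unlucky coincidence that the surviving factor vanishes at the relevant point.
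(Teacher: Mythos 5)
Your argument is correct and follows essentially the same route as the paper: combine Lemma \ref{toroidal_residues_anis} with the factorisation $L_E(\chi\circ\N_{E/F},s+1/2)=L_F(\chi,s+1/2)\,L_F(\chi\chi_T,s+1/2)$, note that a (simple) pole at $s=0$ occurs in exactly one factor precisely when $\omega=1$ or $\omega=\chi_T$, and then use Theorem \ref{thm_Zagier_anis}\,\eqref{anis2} with $g=e$ to arrange $e_T(e,\varphi,\chi,0)=1$; your treatment is in fact slightly more explicit than the paper's about the non-vanishing of the non-singular factor. One small caveat: you justify $L_F(\chi_T,0),L_F(\chi_T,1)\neq0$ by paragraph \ref{L_series_has_no_zero_in_0,1}, claiming $\chi_T$ is not of the form $\norm \ ^s$; this fails for the constant quadratic extension $E=\FF_{q^2}F$, where $\chi_T=\norm \ ^{\pi\i/\ln q}$, so that paragraph does not literally apply. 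The needed non-vanishing still holds in that case, e.g.\ from $\zeta_E(s)=\zeta_F(s)\,L_F(\chi_T,s)$ (paragraph \ref{product_of_L_series}) together with the fact that both zeta functions have simple poles at $0$ and $1$, so the gap is purely one of citation and is immediately repaired.
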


\begin{proof}
 Observe that the residuum of 
 $$ L_E(\chi\circ\N_{E/F},s+1/2) \ = \ L_F(\omega,s+1/2\pm1/2)\ L_F(\omega\chi_T,s+1/2\pm1/2) $$
 at $s=0$ is nontrivial if and only if one of the two factors is the zeta function of $F$, and this happens if $\omega=1$ or $\omega=\chi_T^{-1}=\chi_T$.

 If $\Res_{s=0} L_E(\chi\circ\N_{E/F},s+1/2)=0$, then $R_T(e,\varphi,\chi)=0$ for all Schwartz-Bruhat functions $\varphi$ by Lemma \ref{toroidal_residues_anis}. If not, then $R(e,\varphi_T,\chi)=1\cdot\,\Res_{s=0} L_E(\chi\circ\N_{E/F},s+1/2)$ (Theorem \ref{thm_Zagier_anis}\,\eqref{anis2}) does not vanish.
\end{proof}

\begin{lemma}
 \label{toroidal_derivatives_of_residues_anis}
 Let $T$ be a non-split torus and $\chi\in\Xi$ with $\chi^2=\norm\ ^{\pm1}$.
 If $R_T(e,\varphi,\chi)=0$ for all Schwartz-Bruhat functions $\varphi$, 
 then there exists a Schwartz-Bruhat function $\varphi$ such that $R_T^{(1)}(e,\varphi,\chi) \ \neq \ 0$.
\end{lemma}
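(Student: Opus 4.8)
The plan is to proceed exactly as in the proof of Lemma \ref{toroidal_residues_anis_vanish}, but one order higher in the Taylor expansion at $s=0$. By Theorem \ref{thm_Zagier_anis}\,\eqref{anis1} we have, for $s$ near $0$ with $\chi^2\norm\ ^{2s}\neq\norm\ ^{\pm1}$, the factorisation $E_T(e,\varphi,\chi,s)=e_T(e,\varphi,\chi,s)\,L_E(\chi\circ\N_{E/F},s+1/2)$, where $e_T(e,\varphi,\chi,s)$ is holomorphic in $s$. Write $\chi=\omega\norm\ ^{\pm1/2}$ with $\omega^2=1$, so that by paragraph \ref{def_chi_T} we have $L_E(\chi\circ\N_{E/F},s+1/2)=L_F(\omega,s+1/2\pm1/2)\,L_F(\omega\chi_T,s+1/2\pm1/2)$. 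The hypothesis that $R_T(e,\varphi,\chi)=0$ for all $\varphi$ means, by Lemma \ref{toroidal_residues_anis}, that $\Res_{s=0}L_E(\chi\circ\N_{E/F},s+1/2)=0$; since each of $L_F(\omega,\blanc)$ and $L_F(\omega\chi_T,\blanc)$ has at worst a simple pole, this forces \emph{neither} factor to be the zeta function of $F$, i.e.\ $\omega\neq1$ and $\omega\neq\chi_T$. Consequently both $L_F(\omega,s+1/2\pm1/2)$ and $L_F(\omega\chi_T,s+1/2\pm1/2)$ are holomorphic and non-vanishing at $s=0$ (using that $L(\psi,0)\neq0$ and $L(\psi,1)\neq0$ for $\psi$ a finite-order character not of the form $\norm\ ^s$, cf.\ paragraph \ref{L_series_has_no_zero_in_0,1}; and $\omega$, $\omega\chi_T$ are finite-order characters of order dividing $2$ that are not trivial). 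Therefore $L_E(\chi\circ\N_{E/F},s+1/2)$ is itself holomorphic and non-zero at $s=0$.

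Now I would expand. Set $h(s):=L_E(\chi\circ\N_{E/F},s+1/2)$, which is holomorphic near $s=0$ with $h(0)\neq0$. Then
$$ R_T^{(1)}(e,\varphi,\chi) \ = \ \lim_{s\to0}\ \frac{d}{ds}\ \bigl(s\cdot e_T(e,\varphi,\chi,s)\,h(s)\bigr) \ = \ e_T(e,\varphi,\chi,0)\,h(0) \ + \ 0 \;, $$
because $s\cdot e_T(e,\varphi,\chi,s)\,h(s)$ vanishes to order at least $1$ at $s=0$ and its derivative at $s=0$ equals $e_T(e,\varphi,\chi,0)\,h(0)$. (More carefully: writing $\phi(s)=e_T\,h$, we have $\frac{d}{ds}(s\phi(s))=\phi(s)+s\phi'(s)$, which at $s=0$ is $\phi(0)=e_T(e,\varphi,\chi,0)\,h(0)$; here I use that $E_T(e,\varphi,\chi,s)$ has a genuine pole of order exactly $1$, so taking residue then $R^{(1)}$ is legitimate — but in fact once we know $h(0)\neq0$, the point is simply that $R_T^{(1)}(e,\varphi,\chi)=e_T(e,\varphi,\chi,0)\cdot h(0)$.) So it remains to choose $\varphi$ making $e_T(e,\varphi,\chi,0)\neq0$, and Theorem \ref{thm_Zagier_anis}\,\eqref{anis2} hands us exactly such a $\varphi$, namely the one with $e_T(e,\varphi,\chi,s)=\chi(\det e)\norm{\det e}^{s+1/2}=1$ for all $s$. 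For that $\varphi$ we get $R_T^{(1)}(e,\varphi,\chi)=1\cdot h(0)=L_E(\chi\circ\N_{E/F},1/2)\neq0$.

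The one point that needs a moment's care is the bookkeeping between the residue $R_T$ and its derivative $R_T^{(1)}$: the definition in paragraph \ref{residues_form_1-dim_representations} is $R_T^{(i)}(g,\varphi,\chi)=\lim_{s\to0}\frac{d^i}{ds^i}\,s\cdot E_T(g,\varphi,\chi,s)$, so I must commute $\frac{d^i}{ds^i}$ with the toroidal integral — legitimate since the non-split toroidal integral is over the compact domain $\lquot{T_FZ_\AA}{T_\AA}$, exactly as used in the proof of Theorem \ref{lemma_zagier_for_derivatives} — and then I may freely use the factorisation of Theorem \ref{thm_Zagier_anis}\,\eqref{anis1}. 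I do not expect any real obstacle: the whole content is that the vanishing of the residue $R_T$ pins down $\omega\notin\{1,\chi_T\}$, which \emph{forces} $h(0)\neq0$, and then the next Taylor coefficient of $s\cdot e_T\cdot h$ is automatically non-zero for a suitable $\varphi$. I would conclude with one line noting that for split tori the analogous statement would be vacuous since, by Theorem \ref{thm_Zagier_split} together with paragraph \ref{def_chi_T} ($\chi_T$ trivial), $E_T(g,\varphi,\chi,s)=e_T(g,\varphi,\chi,s)\,L(\chi,s+1/2)^2$ has at $s=0$ a pole of order $2$ when $\chi=\norm\ ^{\pm1/2}$ — but this lemma is only asserted for the non-split case, so no such remark is strictly needed.
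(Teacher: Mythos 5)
Your argument is correct and follows essentially the same route as the paper: use Lemma \ref{toroidal_residues_anis} / Lemma \ref{toroidal_residues_anis_vanish} to turn the hypothesis into $\omega\notin\{1,\chi_T\}$, so that $L_E(\chi\circ\N_{E/F},s+1/2)$ is holomorphic and non-zero at $s=0$ by paragraph \ref{L_series_has_no_zero_in_0,1}, then compute $R_T^{(1)}(e,\varphi,\chi)=e_T(e,\varphi,\chi,0)\,L_E(\chi\circ\N_{E/F},1/2)$ via the Leibniz rule and kill the remaining factor with the $\varphi$ supplied by Theorem \ref{thm_Zagier_anis}\,\eqref{anis2}. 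The only cosmetic difference is that you check non-vanishing factor-by-factor over $F$ (via $L_F(\omega,\cdot)L_F(\omega\chi_T,\cdot)$) where the paper applies it directly to $L_E(\omega\circ\N_{E/F},\cdot)$, which is the same thing by paragraph \ref{def_chi_T}.
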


\begin{proof}
 By Lemma \ref{toroidal_residues_anis_vanish}, we have that $R_T(g,\varphi,\chi)=0$ for all $\varphi$ and $g\in G_\AA$ if and only if 
 $L_E(\omega\circ\N_{E/F},\blanc)$ has no pole at $0$ or $1$.
 With the help of Theorem \ref{thm_Zagier_anis}, we calculate
 \begin{align*}
  R_T^{(1)}(e,\varphi_T,\chi) 
  &= \lim_{s\to0}\ \frac{d}{ds}\ s\ E_T(e,\varphi_T,\chi,s) \\
  &= \lim_{s\to0}\ \frac{d}{ds}\ s\ e_T(e,\varphi_T,\chi,s)\,L_E(\chi\circ\N_{E/F},s+1/2) \\
  &= \lim_{s\to0}\ \biggl(e_T(e,\varphi_T,\chi,s)\,L_E(\chi\circ\N_{E/F},s+1/2) \\ 
  &\hfill\hspace{3cm} +\ s\ \frac{d}{ds}\ e_T(e,\varphi_T,\chi,s)\,L_E(\chi\circ\N_{E/F},s+1/2) \biggr) \\
  &= e_T(e,\varphi_T,\chi,0)\,L_E(\omega\circ\N_{E/F},1/2\pm1/2)\;,
 \end{align*}
 which does not vanish by Theorem \ref{thm_Zagier_anis} \eqref{anis2} and by the non-vanishing of $L$-functions of non-trivial finite Hecke character in $0$ 1nd $1$ (cf.\ paragraph \ref{L_series_has_no_zero_in_0,1}).
\end{proof}

\begin{pg}
 Let $T$ be a split torus and $\chi=\omega\norm \ ^{\pm1/2}\in\Xi$ with $\omega^2=1$.
 Let $N$ be the unipotent radical of a Borel subgroup $B\subset G$.
 Then
 $$ (\omega\circ\det)_N(g) \ = \hspace{-2pt}\int \limits_{\lquot{N_F}{N_\AA}} \hspace{-2pt} \omega\circ\det(n\!g)\,d\!n 
    \ = \ \omega\circ\det(g) \;. $$
 Consequently 
 $$ R_T(e,\varphi,\chi) \ = \ 0 $$
 for every Schwartz-Bruhat function $\varphi$ by the definition of the toroidal integral for a split torus.

 We summarise:
\end{pg}

\begin{thm}
 \label{E-toroidal_residues}
 Let $E$ be a separable quadratic algebra extension of $F$, $\chi_E$ the character from paragraph \ref{def_chi_T} and
 $\chi=\omega\norm \ ^{\pm1/2}\in\Xi$ with $\omega^2=1$. 
 \begin{enumerate}
  \item If $\omega$ is trivial, then $R(\blanc,\chi)\in\cA_\tor(E)$ if and only if $E\simeq F\oplus F$.
  \item If $\omega$ is nontrivial, then $R(\blanc,\chi)\in\cA_\tor(E)$ if and only if $\omega\neq\chi_E$.
  \item If $E$ is a field and $n\geq1$, then $R^{(n)}(\blanc,\chi)\notin\cA_\tor(E)$.\qed
 \end{enumerate} 
\end{thm}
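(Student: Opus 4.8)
The plan is to prove Theorem~\ref{E-toroidal_residues} by combining the computations of the previous lemmas with the description of residues as characters of the determinant. For part~(i), let $\chi=\norm\ ^{\pm1/2}$, so $\omega=1$. If $E$ is a field, the relevant $L$-factor is $L_E(\chi\circ\N_{E/F},s+1/2)=L_F(\norm\ ^{\pm1/2},s+1/2)L_F(\chi_T\norm\ ^{\pm1/2},s+1/2)$; the first factor contributes a pole at $s=0$ (since one of $s+1/2\pm1/2$ hits $0$ or $1$), so $\Res_{s=0}L_E(\chi\circ\N_{E/F},s+1/2)\neq0$, and by Lemma~\ref{toroidal_residues_anis_vanish} (applied with the first case $\omega=1$) there is a $\varphi$ with $R_T(e,\varphi,\chi)\neq0$; hence $R(\blanc,\chi)\notin\cA_\tor(E)$. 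If $E\simeq F\oplus F$, then $\chi=\omega\norm\ ^{\pm1/2}$ with $\omega$ trivial means $R(\blanc,\chi)=R(e,f^0)\cdot(1\circ\det)$ is constant on $G_\AA$ (paragraph~\ref{residue=character_of_det}), and the last displayed computation in the split-torus paragraph shows $R_T(e,\varphi,\chi)=0$ for all $\varphi$; so $R(\blanc,\chi)\in\cA_\tor(E)$. This settles~(i).

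For part~(ii), $\omega$ is a nontrivial quadratic character. When $E\simeq F\oplus F$ we have $\chi_E=1\neq\omega$, and the same split-torus computation gives $R(\blanc,\chi)\in\cA_\tor(E)$, consistent with the claim. When $E$ is a field, Lemma~\ref{toroidal_residues_anis_vanish} says directly that there is a $\varphi$ with $R_T(e,\varphi,\chi)\neq0$ if and only if $\omega=1$ or $\omega=\chi_T=\chi_E$; since $\omega\neq1$ here, this reduces to $\omega=\chi_E$. Thus $R(\blanc,\chi)\in\cA_\tor(E)$ iff $R_T(e,\varphi,\chi)=0$ for all $\varphi$ iff $\omega\neq\chi_E$, using the alternative characterisation of $E$-toroidality via $\Phi(f)_T(e)$ from paragraph~\ref{toroidal_form_def_by_H} together with the fact that $g.R(\blanc,\varphi,\chi)$ again lies in the span of such residues. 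This gives~(ii).

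For part~(iii), assume $E$ is a field and $n\geq1$. We must show $R^{(n)}(\blanc,\chi)\notin\cA_\tor(E)$, i.e.\ there is some $g$ (equivalently some $\varphi$, by the $\cH$-module reformulation) with $R_T^{(n)}(\cdot,\varphi,\chi)$ not identically zero. Split into the two subcases distinguished by Lemma~\ref{toroidal_residues_anis_vanish}. If $\omega\neq1$ and $\omega\neq\chi_E$, then $L_E(\chi\circ\N_{E/F},s+1/2)$ is \emph{holomorphic} at $s=0$, so $R_T(e,\varphi,\chi)=0$ for all $\varphi$, and Lemma~\ref{toroidal_derivatives_of_residues_anis} supplies a $\varphi$ with $R_T^{(1)}(e,\varphi,\chi)\neq0$; for higher $n$ one iterates the same Leibniz-type computation, the point being that once the residue is killed, the $s\cdot E_T$ expression near $s=0$ behaves like $s\cdot(\text{holomorphic nonzero})$ up to the holomorphic factor $e_T$, whose value $e_T(e,\varphi_T,\chi,0)=\chi(\det e)=1$ is nonzero by Theorem~\ref{thm_Zagier_anis}\,\eqref{anis2}, so the $n$-th derivative at $0$ picks out $n\cdot e_T(e,\varphi_T,\chi,0)\,L_E(\omega\circ\N_{E/F},1/2\pm1/2)\neq0$ (non-vanishing of $L$ of a nontrivial finite character at $0,1$, paragraph~\ref{L_series_has_no_zero_in_0,1}). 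If instead $\omega=1$ or $\omega=\chi_E$, then $L_E(\chi\circ\N_{E/F},s+1/2)$ has a genuine pole at $s=0$, so $R_T(e,\varphi_T,\chi)\neq0$ already by parts~(i)/(ii); differentiating $n$ more times, the leading term $s\,E_T=e_T(e,\varphi_T,\chi,s)L_E(\cdot,s+1/2)$ still has a nonzero value of its $n$-th derivative at $s=0$ because the simple pole of $L_E$ multiplied by $s$ is holomorphic and nonzero at $0$ and $e_T(e,\varphi_T,\chi,0)=1$, so again $R_T^{(n)}(e,\varphi_T,\chi)\neq0$. Either way $R^{(n)}(\blanc,\chi)$ has a nonvanishing toroidal integral.

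The main obstacle I expect is the bookkeeping in part~(iii): one has to verify that applying $\tfrac{d^n}{ds^n}$ to $s\,e_T(g,\varphi,\chi,s)\,L_E(\chi\circ\N_{E/F},s+1/2)$ and evaluating at $s=0$ genuinely produces a nonzero number in each of the two regimes (pole present versus pole absent), being careful that the holomorphic prefactor $e_T$ does not accidentally vanish — which is exactly why one chooses $\varphi=\varphi_T$, so that $e_T(e,\varphi_T,\chi,s)\equiv1$ by Theorem~\ref{thm_Zagier_anis}\,\eqref{anis2}, reducing everything to the (non)vanishing of derivatives of $L_E(\chi\circ\N_{E/F},s+1/2)$, which in turn reduces to the known non-vanishing of $L$-series of nontrivial finite-order Hecke characters at $s=0$ and $s=1$.
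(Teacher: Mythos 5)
Parts (i) and (ii) of your argument are fine and are essentially the paper's own route: Lemma \ref{toroidal_residues_anis_vanish} for non-split tori, the computation $(\omega\circ\det)_N=\omega\circ\det$ for split tori, and the fact that all residues are multiples of $\omega\circ\det$ to pass between $R(\blanc,\varphi,\chi)$ and $R(\blanc,\chi)$.

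Part (iii), however, has a genuine gap. Writing $h(s)=e_T(e,\varphi_T,\chi,s)\,L_E(\chi\circ\N_{E/F},s+1/2)$ with $e_T(e,\varphi_T,\chi,s)\equiv 1$, one has
$$ R_T^{(n)}(e,\varphi_T,\chi)\ =\ \frac{d^n}{ds^n}\bigl(s\,h(s)\bigr)\Big|_{s=0}\ =\ n\,h^{(n-1)}(0), $$
so for $n\geq 2$ the quantity you need is (up to the factor $n$) the $(n-1)$-st \emph{derivative} of $L_E(\omega\circ\N_{E/F},\blanc)$ at $0$ or $1$, not its value; the non-vanishing statement of paragraph \ref{L_series_has_no_zero_in_0,1} controls only the value, and nothing rules out that these higher Taylor coefficients vanish. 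Your claim that the $n$-th derivative ``picks out $n\cdot e_T(e,\varphi_T,\chi,0)\,L_E(\omega\circ\N_{E/F},1/2\pm1/2)$'' is only correct for $n=1$, which is exactly Lemma \ref{toroidal_derivatives_of_residues_anis}. The same problem occurs in the pole regime ($\omega=1$ or $\omega=\chi_E$): ``$s\,L_E$ holomorphic and non-zero at $s=0$'' gives non-vanishing of the value, i.e.\ of $R_T^{(0)}$, but says nothing about the higher Taylor coefficients that compute $R_T^{(n)}$ for $n\geq1$. The argument the paper's summary actually relies on is representation-theoretic, not analytic: $\cA_\tor(E)$ is a subrepresentation, and by the Hecke action on derivatives of residues (Lemma \ref{gen_residue_eigenvalue}) together with the existence of a place $x$ with $\lambda_x^-(\chi)\neq 0$ (Lemma \ref{lemma_chi^2=0_and_lambda_x^-(chi)=0}, applicable since $\chi^2=\norm{\ }^{\pm1}\neq 1$), applying $(\Phi_x-\lambda_x(\chi))$ repeatedly shows that the $\cH_K$-module generated by $R^{(n)}(\blanc,\chi)$ contains $R^{(1)}(\blanc,\chi)$ and $R(\blanc,\chi)$. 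Hence if $R^{(n)}(\blanc,\chi)$ were $E$-toroidal, so would be $R(\blanc,\chi)$ and $R^{(1)}(\blanc,\chi)$, contradicting Lemma \ref{toroidal_residues_anis_vanish} when $\omega\in\{1,\chi_E\}$ and Lemma \ref{toroidal_derivatives_of_residues_anis} when $\omega\notin\{1,\chi_E\}$; no non-vanishing of derivatives of $L$-series is needed. You should replace your iterated Leibniz computation by this closure argument.
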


 Since by class field theory every quadratic character of $\AA^\times/F^\times$ is of the form $\chi_E$ for some quadratic separable field extension $E$ of $F$, we finally obtain:

\begin{thm}
 \label{no_toroidal_residues}
 $\cR_\tor=\{0\}$.\qed
\end{thm}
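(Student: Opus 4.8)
The plan is to deduce Theorem \ref{no_toroidal_residues} directly from Theorem \ref{E-toroidal_residues} together with the surjectivity statement from class field theory recalled in paragraph \ref{class_field_theory}. First I would recall that $\cR_\tor$ denotes the space of toroidal residues of Eisenstein series, i.e.\ the intersection over all separable quadratic algebra extensions $E/F$ of the spaces of $E$-toroidal residues; by paragraph \ref{residues_form_1-dim_representations} every such residue is, up to scalar, of the form $R(\blanc,\chi)$ or a higher derivative $R^{(n)}(\blanc,\chi)$ for a quasi-character $\chi=\omega\norm\ ^{\pm1/2}$ with $\omega^2=1$, so it suffices to check these generators.

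The key case split follows the trichotomy of Theorem \ref{E-toroidal_residues}. If $\omega$ is trivial, then $R(\blanc,\chi)$ is $E$-toroidal only for $E\simeq F\oplus F$, hence it fails to be $E$-toroidal for any quadratic \emph{field} extension $E/F$, of which there is at least one; so such a residue does not lie in $\cR_\tor$. If $\omega$ is nontrivial, then since every nontrivial quadratic character of $\AA^\times/F^\times$ arises as $\chi_E$ for some separable quadratic field extension $E/F$ (this is exactly the class field theory input, combined with the fact that a global function field always admits quadratic extensions), we may choose $E$ with $\chi_E=\omega$; then part (ii) of Theorem \ref{E-toroidal_residues} shows $R(\blanc,\chi)\notin\cA_\tor(E)$, so again it is not in $\cR_\tor$. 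Finally, for the derivatives $R^{(n)}(\blanc,\chi)$ with $n\geq1$, part (iii) of Theorem \ref{E-toroidal_residues} gives a field $E$ for which $R^{(n)}(\blanc,\chi)\notin\cA_\tor(E)$, so these too are excluded.

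Since every nonzero element of the span of residues and their derivatives decomposes into such generators attached to finitely many quasi-characters, and each generator is eliminated by a suitable choice of $E$, no nonzero combination can be simultaneously $E$-toroidal for all $E$; one has to be slightly careful that a linear combination of generators for \emph{different} $\chi$'s is handled, but the one-dimensionality in paragraph \ref{residues_form_1-dim_representations} together with the fact that residues for distinct $\omega$ transform under different central characters $\omega\circ\det$ means that being $E$-toroidal forces each isotypic component to vanish separately. Hence $\cR_\tor=\{0\}$. The main obstacle is really just the bookkeeping of the last point — disentangling a general element of $\cR_\tor$ into its $\chi$-components and invoking the right instance of Theorem \ref{E-toroidal_residues} for each — but no new idea beyond the preceding results is needed.
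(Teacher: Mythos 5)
Your proof is correct and follows essentially the same route as the paper: the paper deduces Theorem \ref{no_toroidal_residues} directly from Theorem \ref{E-toroidal_residues} together with the class field theory fact that every quadratic character of $\AA^\times/F^\times$ is $\chi_E$ for some separable quadratic field extension, exactly as you do. Your extra bookkeeping for linear combinations is sound (the paper leaves it implicit), though the separation of components is better phrased via the fact that $\cA_\tor(E)$ is an $\cH$-submodule and the residues for distinct $\omega$ span non-isomorphic one-dimensional representations $\omega\circ\det$, rather than via ``central characters'', which are trivial here.
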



\section{Non-vanishing for quadratic twists of $L$-series}
\label{section_non-vanishing}

\noindent
 Let $q$ be odd throughout this section. The goal of this section is to show that not all quadratic twists $L(\chi\chi_E,s)$ vanish simultaneous in a given $\chi$ and $s$. For the proof of this result, we employ Double Dirichlet series, which are certain weighted sums over all quadratic twists of $L(\chi,s)$. Essentially, we show that this series does not vanish for any $\chi$ and $s$ (as a function in another parameter $w$), which implies that at least one quadratic twist of the $L$-series in question is not zero in $\chi$ and $s$. 

\begin{thm}
 \label{thm_non-vanish}
 Let $q$ be odd. Then there is for every $\chi\in\Xi$ and $s\in\CC$ a separable quadratic field extension $E$ of $F$ such that $L(\chi\chi_E,s)\neq0$.
\end{thm}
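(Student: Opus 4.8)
The plan is to prove this via the theory of multiple (Double) Dirichlet series, following the approach sketched in the introduction to Section \ref{section_non-vanishing}. The key object is a double Dirichlet series of the form
$$ Z(s,w) \ = \ \sum_{d} \frac{L(\chi\chi_d, s)\,P_d(s)}{\norm{d}^w} \;, $$
where $d$ ranges over (representatives of) square-free effective divisors of $F$, $\chi_d$ is the quadratic character associated to the extension $F(\sqrt{\pi_d})$ (using that $q$ is odd, so quadratic reciprocity is available), and $P_d(s)$ is a suitable correction polynomial at the places dividing $d$ that makes the double sum enjoy a group of functional equations in $(s,w)$. The point is that $Z(s,w)$, viewed as a function of $w$ for fixed $s$, has a meromorphic continuation to all of $\CC$ with a pole whose location is dictated by the functional equations; in particular $Z(s,\blanc)$ is not identically zero. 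Since $\norm{d}^{-w}$ are linearly independent as functions of $w$, this forces $L(\chi\chi_d,s)\neq 0$ for at least one $d$, i.e.\ there is a separable quadratic extension $E=F(\sqrt{\pi_d})$ with $L(\chi\chi_E,s)\neq0$.

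Concretely, I would carry out the following steps. First, set up the combinatorial data: for $q$ odd, every separable quadratic extension of $F$ corresponds to a quadratic character $\chi_E$, and these are parametrised (up to the unramified quadratic twist when $g>0$, which must be handled separately) by square-free effective divisors via the Kummer/class-field description. Second, define $Z(s,w)$ as above with the precise local factors $P_d(s)$ — here one uses the standard recipe from the work of Fisher--Friedberg or Chinta--Friedberg--Hoffstein in the function-field setting, so that the ``switching'' identity relating the coefficient of $\norm{d}^{-w}$ to an $L$-value and the coefficient of $\norm{e}^{-s}$ to another $L$-value holds. Third, invoke the functional equation of $L(\chi\chi_d,s)$ in $s$ (paragraph \ref{cont_L_series}) together with this switching identity to produce a functional equation for $Z(s,w)$ under $s\mapsto 1-s$, and — by symmetry of the construction — an analogous one under $w\mapsto 1-w$; composing them gives a finite group of functional equations acting on $(s,w)$. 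Fourth, use the fact that each individual $L(\chi\chi_d,s)$ is a polynomial in $q^{-s}$ (function-field $L$-functions are rational) to get an initial region of absolute convergence for $Z(s,w)$ in $w$, and then use the functional equations plus a standard Bochner-type tube-domain argument to continue $Z(s,w)$ meromorphically in $w$ to all of $\CC$, controlling its polar divisor. Finally, read off non-vanishing: the continued $Z(s,\blanc)$ is a nonzero rational function of $q^{-w}$ (it has a genuine pole, or at worst is a nonzero entire function coming from the convergent sum of nonzero leading terms), and since the monomials $\norm{d}^{-w}$ are linearly independent, not all coefficients $L(\chi\chi_d,s)P_d(s)$ can vanish; choosing $d$ with $P_d(s)\neq0$ (the correction factor is an explicit nonvanishing local polynomial at generic $s$, and at the finitely many bad $s$ one perturbs $d$) yields the claim.

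The main obstacle I expect is twofold. First, the bookkeeping of the local correction factors $P_d(s)$ and the exact shape of the functional equations: getting the ``switching'' identity to close up requires the local factors at ramified places to be chosen exactly right (this is the technical heart of multiple Dirichlet series), and one must be careful about the finitely many places of bad reduction, about the prime $2$ (excluded here), and — in the function-field case — about the residue-field size $q$ entering the $\epsilon$-factors. Second, and more seriously for the \emph{uniformity} in $s$ demanded by the statement, one needs non-vanishing of $Z(s,\blanc)$ for \emph{every} fixed $s\in\CC$, including points where $L(\chi,s)$ itself vanishes or where $Z(s,w)$ might a priori be identically zero in $w$; ruling this out uses that the polar divisor of the continued $Z$ in $w$ is independent of $s$ (it comes from the $\zeta$-factor produced by the switching/functional equations, not from the variable twists), so $Z(s,\blanc)$ has a pole for every $s$ and in particular is nonzero. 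Handling the residual/boundary cases $\chi^2=\norm\ ^{\pm1}$ and the unramified quadratic character when $g\geq1$ requires separate, more elementary arguments (for those one can appeal directly to paragraph \ref{L_series_has_no_zero_in_0,1} and the product formula \ref{product_of_all_L_series}).
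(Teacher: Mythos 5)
Your overall strategy -- building a quadratic double Dirichlet series out of the twists $L(\chi\chi_d,s)$, continuing it in $w$, and extracting non-vanishing of at least one coefficient from a pole -- is indeed the route the paper takes (via the Fisher--Friedberg construction in the ideal-class language, citing their continuation theorem rather than re-proving it by a Bochner argument). But the step you yourself flag as the serious one, uniformity in $s$, contains a genuine gap as you have set it up. You assert that the polar divisor of $Z(s,\cdot)$ in $w$ is ``independent of $s$'' because it comes from a $\zeta$-factor and not from the variable twists, and you conclude that $Z(s,\cdot)$ has a pole, hence is nonzero, for \emph{every} fixed $s$. That is not true: the location of the polar line $w=1$ is independent of $s$, but the residue there is a nonzero constant times $L_S(\chi^2,2s)$ (this is exactly the computation in paragraph \ref{residuum_of_DD}, following Friedberg--Hoffstein--Lieman), so it vanishes whenever $2s$ is a zero of $L(\chi^2,\cdot)$ -- infinitely many $s$, and the analogous residue on the second polar line is again an $L$-value that can vanish. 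For such $s$ your argument produces nothing. The paper's proof closes this hole by a preliminary normalisation you omit: first absorb $\Re\chi$ into $s$ so that $\chi$ is unitary, then use the functional equation of $L(\chi\chi_E,s)$ (legitimate because $\chi_E$ is quadratic, so the twist is preserved) to assume $\Re s\geq 1/2$; then $\Re(2s)\geq1$ forces $L_S(\chi^2,2s)\neq0$ and the residue at $w=1$ is genuinely nonzero. Without this reduction the non-vanishing of $Z(s,\cdot)$ is unjustified at precisely the points one cares about.

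Two smaller points. First, one must make sure that the terms whose non-vanishing you extract really correspond to \emph{non-trivial} quadratic characters, i.e.\ to field extensions $E$: the paper does this by restricting to principal $d$ and subtracting the sub-series over squares, which is harmless for the residue because that sub-series converges at $w=1$ (Phragm\'en--Lindel\"of in the $d$-aspect). Your plan to sum only over square-free divisors from the outset is meant to sidestep this, but then the quoted continuation results no longer apply verbatim, and over a field with non-trivial class group a square-free effective divisor does not by itself determine an extension $F(\sqrt{\pi_d})$ -- this is exactly the difficulty the paper's $S$-integer/ray-class bookkeeping is designed to remove, and it needs to be done, not just mentioned. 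Second, your final ``choose $d$ with $P_d(s)\neq0$ and perturb'' is unnecessary and slightly backwards: a nonzero coefficient $L(\chi\chi_d,s)P_d(s)\neq0$ already gives $L(\chi\chi_d,s)\neq0$ directly, and reinstating the finitely many omitted Euler factors is harmless since they are finite.
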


\noindent
This theorem together with Theorem \ref{thm_Zagier_split} implies that the only (derivatives of) Eisenstein series that are toroidal are those that correspond to zeros of the (untwisted) $L$-series $L(\chi,s+1/2)$. More precisely:

\begin{thm}
 \label{toroidal_if_and_only_L-series=0}
 Let $q$ be odd. Let $\chi\in\Xi$, $s\in\CC$ and $n\geq0$. Then $E^{(n-1)}(\blanc,\varphi,\chi)$ is toroidal for every $\varphi$ if and only if $L(\chi,s)$ vanishes in $s=1/2$ to order at least $n$.\qed
\end{thm}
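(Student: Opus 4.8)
The plan is to deduce Theorem \ref{toroidal_if_and_only_L-series=0} by combining the precise description of toroidal derivatives of Eisenstein series from Section \ref{section_derivatives} with the non-vanishing result of Theorem \ref{thm_non-vanish}. Recall that by Theorem \ref{lemma_zagier_for_derivatives} we have, for any maximal torus $T$ and any $\chi\in\Xi_0$ with $\chi^2\neq\norm\ ^{\pm1}$, the Leibniz-type expansion
$$ E_T^{(n)}(g,\varphi,\chi) \ = \ \sum_{\substack{i+j+k=n\\i,j,k\geq0}} \frac{n!}{i!\,j!\,k!}\ e_T^{(i)}(g,\varphi,\chi)\ L^{(j)}(\chi,1/2)\ L^{(k)}(\chi\chi_T,1/2) \;. $$
First I would observe that the ``if''-part is already contained in Theorem \ref{lemma_zagier_for_derivatives}: if $L(\chi,s)$ vanishes at $s=1/2$ to order at least $n$, then every term with $j\leq n-1$ vanishes, hence $E^{(n-1)}(\blanc,\varphi,\chi)$ is $E$-toroidal for all $E$, i.e.\ toroidal. (One should also address quasi-characters $\chi$ with $\chi^2=\norm\ ^{\pm1}$ via the residue analysis of Section \ref{section_residues}, but since $L(\chi,s+1/2)$ has a pole rather than a zero at $s=0$ in that case, the order-$n$ vanishing hypothesis is vacuous for $n\geq 1$, and for $n=0$ the statement is empty.)

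For the ``only if''-part I would argue by contraposition. Suppose $L(\chi,s)$ vanishes at $s=1/2$ to order exactly $m<n$; I must produce a torus $T$ and a $\varphi$ with $E_T^{(n-1)}(g,\varphi,\chi)\neq0$ for some $g$. By Theorem \ref{thm_non-vanish} applied at $s=1/2$ (after reducing to the unramified case by absorbing a twist, and using that $\chi_E$ ranges over all quadratic characters as $E$ ranges over separable quadratic field extensions), there exists a separable quadratic field extension $E$ with $L(\chi\chi_E,1/2)\neq0$. Let $T$ be the non-split torus attached to $E$, so $\chi_T=\chi_E$. Then in the expansion of $E_T^{(n-1)}(g,\varphi,\chi)$ I would examine the term with $j=m$, $k=0$, $i=n-1-m\geq0$: it equals a nonzero multiple of $e_T^{(n-1-m)}(g,\varphi,\chi)\,L^{(m)}(\chi,1/2)\,L(\chi\chi_E,1/2)$, and the second and third factors are nonzero by hypothesis. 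All terms with $j<m$ vanish because $L^{(j)}(\chi,1/2)=0$ for $j<m$. It remains to choose $\varphi$ and $g$ so that the sum of the surviving terms (those with $j\geq m$) does not all cancel; here Theorem \ref{thm_Zagier_anis}\,\eqref{anis2} is the key tool, since it lets me pick $\varphi$ making $e_T(g,\varphi,\chi,s)=\chi(\det g)\norm{\det g}^{s+1/2}$, an explicit function whose derivatives $e_T^{(i)}(g,\varphi,\chi)$ I can compute and, by varying $g$ (equivalently by a suitable finite linear combination of translates, using that the $\cH$-module generated by $E(\blanc,\varphi,\chi)$ is all of $\cP(\chi)$), arrange to be nonzero and not in destructive interference with the higher-$j$ terms.

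The main obstacle I anticipate is precisely this last cancellation issue: a priori the surviving terms with $j>m$ could conspire to cancel the $j=m$ term for every choice of $\varphi$ and $g$. The cleanest way around this is an induction on $n$ (or on $n-1-m$): for $n-1=m$ the expansion degenerates to the single term $E_T^{(m)}(g,\varphi,\chi)=e_T(g,\varphi,\chi,0)\,L^{(m)}(\chi,1/2)\,L(\chi\chi_E,1/2)$ (all other terms vanishing), which is visibly nonzero for the $\varphi$ of Theorem \ref{thm_Zagier_anis}\,\eqref{anis2}; and for larger $n$ one uses that $E^{(n-1)}(\blanc,\varphi,\chi)$ toroidal for \emph{all} $\varphi$ would, by differentiating the identity of Theorem \ref{thm_Zagier_anis} once less and invoking that $L(\chi\chi_E,1/2)\neq0$, force $E^{(n-2)}(\blanc,\varphi,\chi)$ to be $E$-toroidal for all $\varphi$, contradicting the inductive hypothesis once $n-2\geq m$. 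Alternatively, since the statement is ``toroidal for every $\varphi$'', one can simply differentiate the identity $E_T(g,\varphi,\chi,s)=e_T(g,\varphi,\chi,s)L_E(\chi\circ\N_{E/F},s+1/2)$ directly: $E^{(n-1)}_T(\blanc,\varphi,\chi)\equiv 0$ for all $\varphi$ is equivalent, after dividing by the nonvanishing $L(\chi\chi_E,1/2)$ and using freedom in $e_T$, to $L(\chi,s+1/2)$ having a zero of order $\geq n$ at $s=0$. I would present this direct version, as it avoids the induction and makes transparent where the hypothesis $q$ odd enters (only through Theorem \ref{thm_non-vanish}).
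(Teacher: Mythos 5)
Your proposal is correct and is essentially the argument the paper intends: the theorem is recorded there with no separate proof, as the immediate combination of Theorem \ref{lemma_zagier_for_derivatives} (the ``if'' part, valid for split and non-split tori alike) with Theorem \ref{thm_non-vanish} and Theorem \ref{thm_Zagier_anis} (the ``only if'' part), which is precisely your ``direct version''. The one step you only gesture at --- that $E^{(n-1)}_T(\blanc,\varphi,\chi)\equiv 0$ for all $\varphi$ and $g$ forces $L(\chi,s+1/2)L(\chi\chi_E,s+1/2)$ to vanish at $s=0$ to order at least $n$ --- is completed exactly by your own device: with the $\varphi$ of Theorem \ref{thm_Zagier_anis}\,\eqref{anis2} one has $e_T^{(i)}(g,\varphi,\chi)=\chi(\det g)\norm{\det g}^{1/2}\bigl(\ln\norm{\det g}\bigr)^i$, so letting $\deg\det g$ run through $n$ distinct integers turns the vanishing of the $(n-1)$-st derivatives of the toroidal integrals into an invertible Vandermonde system for the derivatives of the $L$-product (whereas the inductive fallback, as you phrase it, would additionally need the argument from the proof of Theorem \ref{thm_admissible} that toroidality of a derivative forces toroidality of all lower-order derivatives, not merely ``differentiating once less'').
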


\begin{pg}
 The rest of this section is devoted to prove Theorem \ref{thm_non-vanish}, with the exception of some conculding remarks in the end. Since the proof is similar to the analogous statement for number fields (see \cite{Cornelissen-Lorscheid2}), we use the language of ideal classes---in contrast to the paper \cite{Fisher-Friedberg2} where Double Dirichlet series are introduced for function fields; namely, the latter paper the language of divisors on the smooth projective curve corresponding to the function field $F$ is used. All background for Double Dirichlet series in the function field case, however, can be found in \cite{Fisher-Friedberg2}. We review the definition of Double Dirichlet series.

 One difficulty in the definition for general global fields is that they may have non-trivial class number, which implies that the quadratic residue symbol is not defined for every ideal in a ring of integers. This lack can be circumvented in the following way. Let $S\subset X$ be a non-empty finite set of places such that $\cO_S=\{a\in F|\ord_x(a)\geq0\;\forall x\notin S\}$ has class number $1$. Let $C$ be the formal sum of all places $x\in S$ and let $H_C=\AA^\times/F^\times \cO_\AA^\times(C)$ be the ray class group of modulus $C$, where $\cO_\AA^\times(C)$ is the subgroup of all $(a_x)\in\cO_\AA^\times$ such that $\ord(1-a_x)\geq1$ for all $x\in S$. Let $R_C=H_C\otimes\ZZ/2\ZZ$, which is a finite group. Choose a minimal set $b_1,\dots b_r$ of generators for $R_C$ and choose a set $\cE_0$ of ideals in $\cO_S$ that represent these generators. For every $E_0$ in $\cE_0$, let $m_{E_0}$ be an element of $\cO_S^\times$ that generates the (principal) ideal $E_0$ in $\cO_S$. Let $\cE$ denote a set of representatives of $R_C$ of the form $E=\prod E_0^{n_{E_0}}$, where the $E_0$ are elements of $\cE_0$ and the $n_{E_0}$ are natural numbers, and set accordingly $m_E=\prod m_{E_0}^{n_{E_0}}$ with the convention that the trivial element of $R_C$ is represented by $\cO_S$ and that $m_{\cO_S}=1$. Note that $m_E$ generates $E$. 

 Let $I(S)$ denote the set of fractional ideals of $\cO_S$. For $d\in I(S)$, write $d=(a)EG^2$ with $E\in\cE$, $G\in I(S)$ and $a\in F^\times$ with $a\equiv 1\ (\textup{mod}\ C)$. Define $\chi_d = \chi_E $ for the quadratic field extension $E=F[\sqrt{am_E}]$ of $F$, which should be thought of as the quadratic residue symbol for $d$. This definition does not depend on the decomposition $d=(a)EG^2$ of $d$ (\cite[Lemma 1.1]{Fisher-Friedberg2})---it only depends on the choices of $S$ and the $m_E$.
\end{pg}

\begin{pg}
 \label{residuum_of_DD}
 Let $J(S)\subset I(S)$ be the set of (integral) ideals of $\cO_S$. Let $\chi\in\Xi$ be a character, i.e.\ $\Re\chi=0$, and $s\in\CC$. We define the weight factor to be
 $$ a(\chi,s,d)=\sum_{\substack{e_1,e_2\in J(S)\\(e_1e_2)^2\,|\,d}}\frac{\mu(e_1)\chi_d(e_1)\chi(e_1e_2^2)}{\norm{e_1}^s\norm{e_2}^{2s-1}}  $$
 where $\mu$ is the M\"obius function. For $d\in I(S)$, denote by $S_d$ the set of primes above $d$, and let $L_{S\cup S_d}(\chi\chi_d,s)$ be the $L$-series with the factors for primes in $S\cup S_d$ removed. Let $\rho\in\Xi$ be a character unramified outside $S$ and $w\in\CC$. The \emph{Double Dirichlet series of $\chi$ and $\rho$ in $s$ and $w$} is the series
 $$ Z(s,w;\chi,\rho) = \sum_{d\in J(S)} \frac{L_{S\cup S_d}(\chi\chi_d,s)\rho(d)}{\norm d ^w} \cdot a(\chi,s,d). $$
 This expression is absolute convergent for $\Re s>1$ and $\Re w>1$ and has a meromorphic continuation to all $s$ and $w$ (\cite[Thm.\ 4.1]{Fisher-Friedberg2}). It has a pole in $w=1$ (independent of $s$). The residuum of $Z(s,w;\chi,\rho)$ at $w=1$ can be calculated precisely as in \cite[Section 4]{Friedberg-Hoffstein-Lieman}, which treats the case of Double Dirichlet series for $n$-th order twists with $n\geq3$. Namely,
 $$ \Res_{w=1} Z(s,w;\chi,\rho) = \left\{\begin{array}{ll}\Res_{w=1}\zeta_F(w)\cdot\prod_{x\in S}\zeta_{F,x}(1)^{-1}\cdot L_S(\chi^2,2s) & \text{if }\rho=1, \\ 0 & \text{if }\rho\neq 1.\end{array}        \right. $$
\end{pg}
 
\begin{pg}
 We describe two more series, which we shall need in order to proof the theorem. Since the characteristic function $\delta_0$ of $0\in R_C$ equals $(\#R_C)^{-1}\sum_{\rho\in\widehat{R_C}}\rho$, we have that
 $$ \sum_{\substack{d\in J(S)\\d\text{ principal}}} \frac{L_{S\cup S_d}(\chi\chi_d,s)}{\norm d ^w}\cdot a(\chi,s,d) \quad = \quad \frac 1{\#R_C}\sum_{\rho\in\widehat{R_C}}Z(s,w;\chi,\rho), $$
 is a meromorphic function in $s$ and $w$, which we denote by $Z^0(s,w;\chi)$. This series converges for every $s$ if $\Re w$ is large enough. This is because of the Phragm\'en-Lindel\"of estimates in the $d$-aspect of the form
 $$ \norm{L_{S\cup S_d}(\chi,s) a(\chi,s,d^2)} \ll \norm d $$
 (cf.\ \cite[3.3]{Chinta-Friedberg-Hoffstein}). Since $q$ is odd, every square $d\in(F^\times)^2$ has two different roots. Note that $S_{d^2}=S_d$ and that $\chi_{d^2}$ is trivial. Thus 
 $$ \sum_{\substack{d\in J(S)\\d\in (F^\times)^2\text{ principal}}} \frac{L_{S\cup S_d}(\chi\chi_d,s)}{\norm d ^w}\cdot a(\chi,s,d) \quad = \quad \frac 12\sum_{\substack{d\in J(S)\\d \text{ principal}}} \frac{L_{S\cup S_d}(\chi,s)}{\norm d ^{2w}}\cdot a(\chi,s,d^2), $$
 a series, which we denote by $Z^0_\textup{sq}(s,w;\chi)$. It converges for $\Re w > 1/2$ for the same reasons as for $Z^0(s,w;\chi)$.
\end{pg}

\begin{pg}
 We can proceed with the proof of Theorem \ref{thm_non-vanish}. Since $L(\chi,s)=L(\chi\norm\ ^{-\Re\chi},s+\Re\chi)$, we may assume that $\chi$ is a character in order to prove the theorem. By the functional equation of $L(\chi,s)$, we may further assume that $\Re s\geq1/2$. We fix $\chi\in\Xi$ and $s$. The twisted $L$-series occuring in the series
 $$ \sum_{\substack{d\in J(S)\\d\text{ principal}\\ \text{not a square}}} \frac{L_{S\cup S_d}(\chi\chi_d,s)}{\norm d ^w}\cdot a(\chi,s,d) \quad = \quad Z^0(s,w;\chi) - Z^0_\textup{sq}(s,w;\chi)  $$
 are all of the form $L(\chi\chi_E,s)$ (up to some non-vanishing factors for the places in $S\cup S_d$) for a non-trivial quadratic character $\chi_E$ which corresponds to a separable quadratic field extension $E$ of $F$. If we can show that this series does not vanish (as a function in $w$), then at least one of the terms $L(\chi\chi_E,s)$ is not zero, and the theorem follows.

 We do this by showing that $Z^0(s,w;\chi) - Z^0_\textup{sq}(s,w;\chi)$ has a non-trivial residue in $w=1$. Since the defining series for $Z^0_\textup{sq}(s,w;\chi)$ converges in $w=1$,
 $$ \Res_{w=1} \bigl(Z^0(s,w;\chi) - Z^0_\textup{sq}(s,w;\chi)\bigr) = \Res_{w=1} Z^0(s,w;\chi) =  \frac {1}{\#R_C}\sum_{\rho\in\widehat{R_C}}\Res_{w=1} Z(s,w;\chi,\rho), $$
 which is a sum over $0$'s except for the summand corresponding to the trivial character $\rho=1$, for which we obtain a term of the form $c\cdot L_S(\chi^2,2s)$ for a non-zero constant $c$, as explained in paragraph \ref{residuum_of_DD}. As the real part of $s$ was assumed to be at least $1/2$, neither $L_S(\chi^2,2s)$ is zero. This accomplishes the proof of the theorem. \qed
\end{pg}

\begin{rem}
 \label{rem_omit_E}
 Note that the statement of Theorem \ref{thm_non-vanish} can be strengthened in the form that for given $\chi$ and $s$, the term $L(\chi\chi_E,s)$ is not zero for infinitely many $E$. This is because one can substract a similar term to $Z^0_\textup{sq}(s,w;\chi)$ for the $\chi_E$-twists of a given quadratic field extension $E$ from $Z^0(s,w;\chi)$, which does not change the residuum. In particular, for odd characteristic, we could omit finitely many separable quadratic algebra extensions $E$ in the intersection $\cA_\tor=\bigcap\cA_\tor(E)$ without changing $\cA_\tor$.
\end{rem}

\begin{rem}
 As the formalism of Double Dirichlet series does not apply to characteristic $2$, the above proof does not say something about this case. Note that also in Ulmer's paper \cite{Ulmer} characteristic $2$ is excluded. Namely, Theorems 1.1 and 5.2 of \cite{Ulmer} imply that for every global function field $F$ of characteristic different from $2$, there is integer $n_0$ such that for every $n\geq n_0$ the quadratic twists $L(\chi\chi_E,s)$ do not vanish simultaneously in $\chi$ and $s$ for the constant field extension $\FF_{q^n}F$. 

 For low genus $g\leq1$ and unramified $\chi$, we can, however, look at certain explicit extensions to exclude a common zero (unless the class number $h$ equals $q+1$). For genus $g=0$, all unramified quasi-characters are principal, i.e.\ of the form $\norm\ ^s$. It suffices thus to consider the zeta funtion of $F$, which is $\zeta_F(s)=\frac{1}{(1-q^s)(1-q^{1-s})}$. It has no zero, and thus there is no unramified $F\oplus F$-toroidal Eisenstein series (notice the non-vanishing result for the split torus in the unramified case in Theorem \ref{thm_Zagier_split}). For genus $g=1$, the consideration of all unramified extensions of $F$ yields the desired result as long as $h\neq q+1$, see \cite[Cor.\ 7.13]{Lorscheid3}.
\end{rem}

\bigskip

\part{Toroidal representations}
\label{part_representations}

\noindent 
A \emph{toroidal representation} is a subrepresentation of $\cA_\tor$. In this part, we investigate properties of toroidal representation like temperedness and admissibility, the former notion being closely related to the Riemann hypothesis for function fields as explained in the following section.


\section{Connection with the Riemann hypothesis}
\label{section_riemann}


\begin{pg}
 \label{tensor_product_of_repr}
 In this section, we translate the observation of Don Zagier (\cite[pp. 295--296]{Zagier1}) that unitarizability of the space of toroidal automorphic forms implies the Riemann hypothesis to the setting of global function fields.

 We begin wth recalling some background in automorphic representations, by which we mean subrepresentations of $\cA$. Every (infinite-dimensional) irreducible automorphic representation $V$ decomposes into a restricted tensor product $\bigotimes'_{x\in X}\cP(\chi_x)$ of the principal series representations $\cP(\chi_x)$ of $G(F_x)$ (cf.\ \cite{Bump} for details on restricted tensor products). The principal series $\cP(\chi_x)$ of the quasi-character $\chi_x$ is the space of smooth functions $f$ on $G(F_x)$ that satisfy 
 $$ f\Bigl(\smallmat a b {} d g \Bigr) \ = \ \norm{\frac ad}_x^{1/2}\chi_x\Bigl(\frac ad\Bigr)\,f(g) $$
 for all $\tinymat a b {} d $ and $g$ in $G(F_x)$. If $V\simeq\cP(\chi)$, then the local characters $\chi_x$ are the restrictions of $\chi$ to $F_x$. An irreducible subrepresentation $V$ is called {\it tempered} if it is isomorphic to $\bigotimes'\cP(\chi_x)$ where all $\chi_x$ are characters.

 Let the {\it Eisenstein part $\cE$} be the vector space spanned by all Eisenstein series and their derivatives, the {\it residual part} $\cR$ be the vector space spanned by the residues of Eisenstein series and their derivatives in the sense of paragraph \ref{def_derivatives}, and the {\it cuspidal part} $\cA_0$ be the space of cusp forms (see paragraph \ref{cusp_forms}). We shall refer to $\varcE:=\cE\oplus\cR$ as the {\it completed Eisenstein part}. A theorem of Waldspurger and Moeglin in \cite{Moeglin-Waldspurger} says that 
 $$ \cA \ = \ \cA_0 \, \oplus \, \cE \, \oplus\, \cR $$
 as automorphic representation.
\end{pg}

\begin{pg}
 \label{def_lambda_x}
 Let $K$ be the standard maximal compact open subgroup of $G_\AA$ (see paragraph \ref{def_K}). Then we denote by $\cH_K$ the subalgebra of bi-$K$-invariant functions of $\cH$, i.e.\ the algebra of locally constant functions $\Phi:G_\AA\to\CC$ with compact support such that $\Phi(kgk')=\Phi(g)$ for all $k,k'\in K$. We define for every $x\in X$ the Hecke operator $\Phi_x\in\cH_K$ as the characteristic function of $K\tinymat \pi_x {} {} 1 K$ and the Hecke operator $\Psi_x$ as the characteristic function of $K\tinymat \pi_x {} {} \pi_x K$. Then $\cH_K=\CC[\Phi_x,\Psi_x^{\pm1}]_{x\in X}$, in particular $\cH_K$ is commutative. Note that $\Psi_x$ and $\Psi_x^{-1}$ operate trivial on the space $\cA^K$ of $K$-invariant automorphic forms. We denote by $\cA^\nr$ the smallest subrepresentation of $\cA$ that contains $\cA^K$.
 
 Let $E(\blanc,\chi)$ be the unramified Eisenstein series associated to $\chi$ (see paragraph \ref{def_unram_Eisenstein}) and $q_x=q^{\deg x}$. Then 
 $$\Phi_x.E(\blanc,\chi) \ = \ q_x^{1/2}\bigl(\chi^{-1}(\pi_x) + \chi(\pi_x)\bigr)E(\blanc,\chi) $$
 (cf.\ \cite[\S3 Lemma 3.7]{Gelbart1}), thus $E(\blanc,\chi)$ is an eigenfunction of $\Phi_x$ with eigenvalue $\lambda_x(\chi)=q_x^{1/2}(\chi^{-1}(\pi_x) + \chi(\pi_x))$.
\end{pg}

\begin{lemma}
 \label{eigenvalues_of_tempered_characters}
 Let $\chi\in\Xi_0$, then the following are equivalent.
 \begin{enumerate}
  \item \label{temp1} $\cP(\chi)$ is a tempered representation.
  \item \label{temp2} $\Re\chi=0$.
  \item \label{temp3} $\lambda_x(\chi)\in[-2q_x^{1/2},2q_x^{1/2}]$ for all $x\in X$.
  \item \label{temp4} $\lambda_x(\chi)\in[-2q_x^{1/2},2q_x^{1/2}]$ for one $x\in X$.
 \end{enumerate}
\end{lemma}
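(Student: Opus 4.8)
The plan is to prove the cycle $\eqref{temp2}\Rightarrow\eqref{temp1}\Rightarrow\eqref{temp3}\Rightarrow\eqref{temp4}\Rightarrow\eqref{temp2}$, so that each implication is as cheap as possible. The implication $\eqref{temp2}\Rightarrow\eqref{temp1}$ is essentially the definition of temperedness recalled in paragraph \ref{tensor_product_of_repr}: if $\Re\chi=0$ then $\chi$ is a character, hence so is every local restriction $\chi_x=\chi|_{F_x^\times}$, and $\cP(\chi)\simeq\bigotimes'_x\cP(\chi_x)$ is then a restricted tensor product of principal series attached to local \emph{characters}, which is what ``tempered'' means. For $\eqref{temp1}\Rightarrow\eqref{temp3}$ I would argue that temperedness of $\cP(\chi)$ forces each $\chi_x$ to be unitary; writing $\chi(\pi_x)=q_x^{-it}$ for a real $t$ (using that $\chi$ is unramified, so $\chi_x$ is determined by its value on the uniformizer and $|\chi_x(\pi_x)|=1$), the eigenvalue computed in paragraph \ref{def_lambda_x} becomes $\lambda_x(\chi)=q_x^{1/2}\bigl(q_x^{it}+q_x^{-it}\bigr)=2q_x^{1/2}\cos(t\ln q_x)\in[-2q_x^{1/2},2q_x^{1/2}]$. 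The implication $\eqref{temp3}\Rightarrow\eqref{temp4}$ is trivial.

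The one real step is $\eqref{temp4}\Rightarrow\eqref{temp2}$. Here I would fix the place $x$ for which $\lambda_x(\chi)\in[-2q_x^{1/2},2q_x^{1/2}]$ and unwind the formula $\lambda_x(\chi)=q_x^{1/2}\bigl(\chi(\pi_x)+\chi^{-1}(\pi_x)\bigr)$. Since $\chi$ is unramified, $\chi_x$ is trivial on $\cO_x^\times$, so $\alpha:=\chi(\pi_x)\in\CC^\times$ together with $\alpha^{-1}$ are the two ``Satake parameters'', and $\lambda_x(\chi)/q_x^{1/2}=\alpha+\alpha^{-1}$. The hypothesis says $\alpha+\alpha^{-1}\in[-2,2]$, i.e.\ $\alpha+\alpha^{-1}=2\cos\theta$ for some real $\theta$; the two roots of $\alpha+\alpha^{-1}=2\cos\theta$ are $e^{\pm i\theta}$, both of absolute value $1$. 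Hence $|\chi(\pi_x)|=1$, which means $q_x^{-\Re\chi}=|\chi(\pi_x)|=1$, forcing $\Re\chi=0$ (recall from paragraph \ref{def_K} that $\Re\chi$ is well-defined and that $\norm{\pi_x}^{\Re\chi}=q_x^{-\Re\chi}$ records the ``absolute value part'' of $\chi$ at $x$). The only subtlety to watch is the degenerate case $\alpha=\pm1$ (double root), but there $\alpha$ is still on the unit circle so the argument is unaffected; likewise one should note $\lambda_x$ is genuinely real under the hypothesis, which is automatic once $|\alpha|=1$ since then $\alpha^{-1}=\bar\alpha$.

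I expect the ``hard part'' to be purely bookkeeping rather than conceptual: making sure that $\Re\chi$ as defined in paragraph \ref{def_K} is exactly the quantity that controls $|\chi(\pi_x)|$ at every place simultaneously (so that knowing $|\chi(\pi_x)|=1$ at a \emph{single} $x$ already pins down the global invariant $\Re\chi$), and being careful that the decomposition $\chi=\chi_0\norm\ ^s$ is only unique up to $2\pi\i/\ln q$ so one argues with $\Re s$ directly. No analytic input beyond elementary properties of $\alpha+\alpha^{-1}$ on $\CC^\times$ is needed, and no appeal to the classification of unitary representations of $G(F_x)$ is required for this particular lemma, since temperedness has been \emph{defined} in paragraph \ref{tensor_product_of_repr} in terms of the local characters being characters.
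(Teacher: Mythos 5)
Your proof is correct and takes essentially the same route as the paper's: the equivalence of (i) and (ii) is read off from the definition of temperedness via $\cP(\chi)\simeq\bigotimes'\cP(\chi_x)$, (ii)$\Rightarrow$(iii) is the complex-conjugate computation, and the crux of (iv)$\Rightarrow$(ii) is that $\alpha+\alpha^{-1}\in[-2,2]$ forces $\norm{\alpha}=1$ for $\alpha=\chi(\pi_x)$. The only (harmless) divergence is the last bookkeeping step: the paper passes from $\chi(\pi_x)\in\SS^1$ to $\Re\chi=0$ by noting that $\AA^\times/\bigl(F^\times\cO_\AA^\times\langle\pi_x\rangle\bigr)$ is finite, so all values of $\chi$ lie in $\SS^1$, whereas you use $\norm{\chi(\pi_x)}=q_x^{-\Re\chi}$ coming from the decomposition $\chi=\chi_0\norm{\ }^s$ of paragraph \ref{def_K}; both are valid.
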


\begin{proof}
 We have $\cP(\chi)\simeq\bigotimes'\cP(\chi_x)$ with $\chi_x=\chi\vert_{F_x}$ (see paragraph \ref{tensor_product_of_repr}). All $\chi_x$ are characters if and only if $\chi$ is a character. This is the case if $\im\chi\subset\SS^1$, or equivalently if $\Re\chi=0$. Thus the equivalence of \eqref{temp1} and \eqref{temp2}.

 Assume \eqref{temp2}. Then $\im\chi\subset\SS^1$, and $ \lambda_x(\chi) = q_x^{1/2}\bigr(\chi^{-1}(\pi_x)+\chi(\pi_x)\bigr)$ for every $x\in  X$. But $\chi^{-1}(\pi_x)$ is the complex conjugate of $\chi(\pi_x)$, therefore $\chi^{-1}(\pi_x)+\chi(\pi_x)\in[-2,2]$; thus \eqref{temp3}. The implication from \eqref{temp3} to \eqref{temp4} is trivial.
 
 Conversely, $\chi^{-1}(\pi_x)+\chi(\pi_x)\in[-2,2]$ only if $\chi^{-1}(\pi_x)$ is the complex conjugate of $\chi(\pi_x)$, thus $\chi(\pi_x)\in\SS^1$. Since $\AA^\times/\bigl(F^\times\cO_\AA^\times\langle\pi_x\rangle\bigr))$ is a finite group, all values of $\chi$ are contained in $\SS^1$ and $\Re\chi=0$; thus \eqref{temp2}.
\end{proof}  

\begin{thm}[Zagier]
 \label{thm_zagier}
 If every irreducible subrepresentation of $\cA_\tor^\nr$ is a tempered representation, then all zeros of $\zeta_F$ have real part $1/2$. If furthermore, $\cA_\tor^\nr$ is itself a tempered representation, then $\zeta_F$ has only simple zeros.
\end{thm}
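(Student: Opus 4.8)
The plan is to follow Zagier's original argument and reduce the Riemann hypothesis for $\zeta_F$ to temperedness of the unramified toroidal spectrum via the connection between zeros of $L$-series and toroidal Eisenstein series established in Part \ref{part_eisenstein}. First I would use the factorisation from paragraphs \ref{product_of_all_L_series}--\ref{L_series_has_no_zero_in_0,1}: every zero $s_0$ of $\zeta_F$ is a zero of some $L(\chi,s)$ with $\chi\in\Xi_0$ (indeed we may take $\chi=\norm\ ^{s_0-1/2}$ so that $L(\chi,1/2)=\zeta_F(s_0)$, noting $\zeta_F$ has no zero at $0$ or $1$). By Theorem \ref{lemma_zagier_for_derivatives} (the ``if''-part, valid in all characteristics), if $L(\chi,s+1/2)$ vanishes at $s=0$ then the Eisenstein series $E(\blanc,\varphi,\chi)$ is toroidal for every Schwartz-Bruhat $\varphi$; in particular the unramified Eisenstein series $E(\blanc,\chi)$ lies in $\cA_\tor^\nr$ (it is $K$-invariant, hence in $\cA^K\subset\cA_\tor^\nr$). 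This places the representation $\cP(\chi)$ --- the automorphic representation generated by $E(\blanc,\chi)$, which is isomorphic to $\cP(\chi)$ by paragraph \ref{embedding_principal_series_to_automorphic_forms} whenever $\chi^2\neq\norm\ ^{\pm1}$ --- inside $\cA_\tor^\nr$ as an irreducible subrepresentation.

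Next I would invoke the hypothesis: every irreducible subrepresentation of $\cA_\tor^\nr$ is tempered, so $\cP(\chi)$ is tempered. By Lemma \ref{eigenvalues_of_tempered_characters}, temperedness of $\cP(\chi)$ forces $\Re\chi=0$. For $\chi=\norm\ ^{s_0-1/2}$ this means $\Re(s_0-1/2)=0$, i.e.\ $\Re s_0=1/2$, which is exactly the Riemann hypothesis for $\zeta_F$. One caveat to address: this argument as written excludes the finitely many $s_0$ with $\norm\ ^{2s_0-1}=\norm\ ^{\pm1}$, i.e.\ $s_0\in\{0,1\}$ (where $\cP(\chi)$ is reducible and $E(\blanc,\chi,s)$ has a pole); but $\zeta_F$ has simple poles and no zeros there by paragraph \ref{L_series_has_no_zero_in_0,1}, so these cases are vacuous.

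For the second assertion, suppose $\zeta_F$ has a zero $s_0$ of order $n\geq2$. Then with $\chi=\norm\ ^{s_0-1/2}$ the $L$-series $L(\chi,s+1/2)$ vanishes at $s=0$ to order $\geq2$, so by Theorem \ref{lemma_zagier_for_derivatives} the first derivative $E^{(1)}(\blanc,\varphi,\chi)$ is also toroidal. Taking $\varphi=\varphi_0$, the function $E^{(1)}(\blanc,\chi)=E^{(1)}(\blanc,\varphi_0,\chi)$ is a $K$-invariant automorphic form, hence lies in $\cA_\tor^\nr$. The key point is that $E(\blanc,\chi)$ and $E^{(1)}(\blanc,\chi)$ together generate a reducible but indecomposable (non-semisimple) subrepresentation of $\cA^\nr$: the derivative is not an eigenvector for the Hecke operators $\Phi_x$ but satisfies $\Phi_x.E^{(1)}(\blanc,\chi)=\lambda_x(\chi)E^{(1)}(\blanc,\chi)+\lambda_x'(\chi)E(\blanc,\chi)$ by differentiating the eigenvalue equation in paragraph \ref{def_lambda_x}. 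A representation containing such a Jordan block cannot be tempered (a tempered representation is unitary, hence semisimple, and an indecomposable unitary representation is irreducible), contradicting the hypothesis that $\cA_\tor^\nr$ is itself tempered. Hence $\zeta_F$ has only simple zeros.

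The main obstacle I anticipate is the bookkeeping around reducible principal series and the precise sense in which ``$\cA_\tor^\nr$ is tempered'' rules out non-semisimple pieces: one must be careful that temperedness of the full representation $\cA_\tor^\nr$ (as opposed to merely its irreducible subquotients) is what forbids the Jordan block generated by $E(\blanc,\chi)$ and its derivative. The rest is a direct assembly of Theorem \ref{lemma_zagier_for_derivatives}, Lemma \ref{eigenvalues_of_tempered_characters}, and the identification $\zeta_F(s_0)=L(\norm\ ^{s_0-1/2},1/2)$, together with the elementary observation that $\zeta_F$ has no zeros at $s_0\in\{0,1\}$.
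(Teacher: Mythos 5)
Your argument is correct and takes essentially the same route as the paper's own proof: both rest on Theorem \ref{lemma_zagier_for_derivatives} to place $E(\blanc,\chi)$ with $\chi=\norm\ ^{s_0-1/2}$ (and, for a multiple zero, its first derivative) inside $\cA_\tor^\nr$, on Lemma \ref{eigenvalues_of_tempered_characters} to force $\Re\chi=0$, and on the fact that a proper derivative generates no irreducible but a non-semisimple piece, which is incompatible with temperedness of all of $\cA_\tor^\nr$. Your added care (the excluded points $s_0\in\{0,1\}$, the explicit Jordan-block relation) only spells out what the paper cites from Section \ref{section_basis}; the one slip ``$\cA^K\subset\cA_\tor^\nr$'' should read $\cA_\tor^K\subset\cA_\tor^\nr$, and, exactly like the paper's two-line proof, you pass over the degenerate case $\chi^2=1$, where by Lemma \ref{linear_relation_between_E_and_E^1} the derivative $E^{(1)}(\blanc,\chi)$ is merely a multiple of $E(\blanc,\chi)$ and the Jordan-block argument degenerates.
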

 
\begin{proof}
 By Theorem \ref{lemma_zagier_for_derivatives}, we know that every zero $1/2+s$ of order $n$ of $\zeta_F$ yields that all the functions $E(\blanc,\norm \ ^s),\dotsc,E^{(n-1)}(\blanc,\norm \ ^s)$ are toroidal. It is well-known that only the zeroth derivative $E(\blanc,\norm \ ^s)$ generates an irreducible representation (cf.\ Section \ref{section_basis} for more detail). If this representation is tempered, then the real part of $s$ is $0$ by Lemma \ref{eigenvalues_of_tempered_characters}.
 
 If furthermore $\cA_\tor^\nr$ is the direct sum of irreducible tempered subrepresentations, then no proper derivative of an Eisenstein series can occur and the zeros of $\zeta_F$ must be of order $1$.
\end{proof} 
 
 By Lemma \ref{eigenvalues_of_tempered_characters}, we obtain:

\begin{cor}
 \label{cor_zagier}
 If there is a place $x$ such that the eigenvalue of every $\Phi_x$-eigenfunction in $\cA_\tor^K$ lies in the interval $[-2q_x^{1/2},2q_x^{1/2}]$, then all zeros of $\zeta_F$ have real part $1/2$. \qed
\end{cor}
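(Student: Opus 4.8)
The plan is to re-run the concluding step of the proof of Theorem \ref{thm_zagier}, but drawing temperedness from the single Hecke operator $\Phi_x$ rather than from a blanket assumption, the bridge being Lemma \ref{eigenvalues_of_tempered_characters}. Fix a place $x$ with the stated property and let $\rho$ be a zero of $\zeta_F$; write $\rho=1/2+s$, so that $L(\norm \ ^s,s'+1/2)=\zeta_F(s'+1/2+s)$ vanishes at $s'=0$. Assume first $\norm \ ^{2s}\neq\norm \ ^{\pm1}$, so that the spherical Eisenstein series $E(\blanc,\norm \ ^s)$ is defined (paragraph \ref{def_unram_Eisenstein}). By Theorem \ref{lemma_zagier_for_derivatives} it is toroidal, and being right $K$-invariant it lies in $\cA_\tor^K$.

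Next I would note that $E(\blanc,\norm \ ^s)$ is a $\Phi_x$-eigenfunction with eigenvalue $\lambda_x(\norm \ ^s)=q_x^{1/2}\bigl(q_x^{s}+q_x^{-s}\bigr)$ (paragraph \ref{def_lambda_x}). The hypothesis therefore forces $\lambda_x(\norm \ ^s)\in[-2q_x^{1/2},2q_x^{1/2}]$, and Lemma \ref{eigenvalues_of_tempered_characters} (the implication \eqref{temp4}$\Rightarrow$\eqref{temp2}, applied with $\chi=\norm \ ^s$) yields $\Re(\norm \ ^s)=0$, that is $\Re s=0$ and $\Re\rho=1/2$. Put differently: via Lemma \ref{eigenvalues_of_tempered_characters} the hypothesis says exactly that $\cP(\chi)$ is tempered for every unramified $\chi$ whose spherical Eisenstein series lies in $\cA_\tor^K$, which is precisely what the proof of Theorem \ref{thm_zagier} uses; so the corollary is that proof with this localised input. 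Note that, unlike in the proof of Theorem \ref{thm_zagier}, one does not even need to know that $E(\blanc,\norm \ ^s)$ generates an irreducible representation: it is used directly as a $\Phi_x$-eigenvector in $\cA_\tor^K$.

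The only point needing a little care is the excluded case $\norm \ ^{2s}=\norm \ ^{\pm1}$, where $E(\blanc,\norm \ ^s)$ has a pole; here I would argue exactly as in the proof of Theorem \ref{thm_zagier} (cf.\ Section \ref{section_basis}). A putative zero $\rho$ of that type satisfies $\Re\rho\in\{0,1\}$, and such zeros are ruled out by the classical fact that the only singularities of $\zeta_F$ are its simple poles at $s\in\{0,1\}\pmod{2\pi\i/\ln q}$ and that $\zeta_F$ does not vanish on the lines $\Re s\in\{0,1\}$. I expect this boundary bookkeeping to be the only real subtlety; the core of the argument is the one-line substitution of the hypothesis into the temperedness step of Theorem \ref{thm_zagier} via the equivalence in Lemma \ref{eigenvalues_of_tempered_characters}.
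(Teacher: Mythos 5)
Your argument is correct and is essentially the paper's own route: the corollary is obtained exactly by feeding the toroidal spherical Eisenstein series $E(\blanc,\norm\ ^s)$ attached to a zero of $\zeta_F$ into the hypothesis on $\Phi_x$-eigenvalues and invoking the implication \eqref{temp4}$\Rightarrow$\eqref{temp2} of Lemma \ref{eigenvalues_of_tempered_characters}, as in the proof of Theorem \ref{thm_zagier}. Your explicit treatment of the excluded case $\norm\ ^{2s}=\norm\ ^{\pm1}$ via the classical non-vanishing of $\zeta_F$ on $\Re s\in\{0,1\}$ is a small extra care that the paper leaves implicit, but it does not change the method.
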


\begin{rem}
 By means of this corollary, it is possible to verify the Riemann hypothesis for the function field $F$ in certain cases. For the proofs for rational function fields $\FF_q(T)$ and elliptic function fields with class number $1$, see \cite{Cornelissen-Lorscheid}. For other elliptic function fields, it is possible to tackle this problem by the theory of graphs of Hecke operators, but by direct calculations it could only be shown that the irreducible subrepresentations of $\cA_\tor^\nr$ are unitarizable (see \cite[section 8.4]{Lorscheid-thesis}).
\end{rem}


\section{Temperedness and admissibility of toroidal representations}
\label{section_tempered}

\noindent
 The implication of Theorem \ref{thm_zagier} is of hypothetical nature as the Riemann hypothesis is proven for global function fields, which is known as the Hasse-Weil theorem. Conversely, we can make use of the Hasse-Weil theorem to prove that every irreducible suquotient of $\cA_\tor$ is tempered. We further conclude that $\cA_\tor$ is an admissible representation.

\begin{thm}
 \label{thm_tempered}
 Every irreducible subquotient of $\cA_\tor$ is tempered. 
\end{thm}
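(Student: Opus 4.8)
The plan is to reduce the claim to a statement about the three summands of the decomposition $\cA = \cA_0 \oplus \cE \oplus \cR$ from paragraph \ref{tensor_product_of_repr}, intersected with $\cA_\tor$, and to handle each piece separately. Since temperedness of an irreducible subquotient only depends on the isomorphism class of the subquotient, and since passing to subquotients respects the direct sum decomposition, it suffices to show that every irreducible subquotient of $\cA_\tor\cap\cA_0$, of $\cA_\tor\cap\cE$, and of $\cA_\tor\cap\cR$ is tempered. The residual part $\cR$ is immediately disposed of: by Theorem \ref{no_toroidal_residues} we have $\cR_\tor=\{0\}$, so there is nothing to check there. (One should be slightly careful that $\cA_\tor \cap \cR$ is what is meant by $\cR_\tor$, but this is built into the notation.)

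For the cuspidal part, I would invoke the Ramanujan--Petersson conjecture for $\GL(2)$ over function fields, which is a theorem of Drinfeld in this setting: every cuspidal automorphic representation of $\GL(2)$ over a global function field is tempered. Hence every irreducible subquotient of $\cA_\tor\cap\cA_0$, being a subquotient of a cuspidal representation, is tempered. If one wants to avoid citing Drinfeld's full strength, the same conclusion for the relevant local components follows from the Hasse--Weil theorem (purity of the weights of the corresponding $\ell$-adic sheaves, i.e.\ the Riemann hypothesis for the curve), which is the route the introductory remark to this section seems to be pointing at; either citation is acceptable.

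The substantive case is the Eisenstein part. Here I would use Theorems \ref{thm_Zagier_anis}, \ref{thm_Zagier_split} together with \ref{lemma_zagier_for_derivatives}: an Eisenstein series $E(\blanc,\varphi,\chi)$ or one of its derivatives $E^{(i)}(\blanc,\varphi,\chi)$ lies in $\cA_\tor$ only in connection with vanishing of $L(\chi,s+1/2)$ (and, in odd characteristic, exactly the untwisted $L$-series by Theorem \ref{toroidal_if_and_only_L-series=0}; in characteristic $2$ one still has the ``if''-direction, which suffices for one inclusion, and the other can be argued abstractly). The irreducible subquotients of the representation generated by $E(\blanc,f)$ and its derivatives are, up to isomorphism, subquotients of $\cP(\chi)$ (the derivatives contribute nothing new at the level of irreducible subquotients, as the flat section deformation does not change the isomorphism class of the irreducible constituents), so by Lemma \ref{eigenvalues_of_tempered_characters} temperedness of such a subquotient is equivalent to $\Re\chi=0$. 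Thus the task is: if some derivative of $E(\blanc,\varphi,\chi)$ is toroidal, then $\Re\chi=0$. By Theorem \ref{lemma_zagier_for_derivatives} this forces $L(\chi,s+1/2)$ to vanish at $s=0$, i.e.\ $L(\chi,1/2)=0$ (or a higher derivative vanishes there, which still forces $L(\chi,1/2)=0$ when combined with the other vanishing conditions, or at least forces $\chi$ to be such that $1/2$ is a zero of the completed $L$-function). Now one appeals again to the Hasse--Weil theorem: the zeros of $L(\chi,s)$ for a Hecke character $\chi$ of a function field all lie on the line $\Re s = 1/2$ in the classical normalization, which in the present normalization (shifted by $1/2$) means any zero of $L(\chi,s+1/2)$ forces $\Re\chi = 0$. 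Hence $\chi$ is a character and $\cP(\chi)$ is tempered.

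The main obstacle I anticipate is bookkeeping rather than a genuine difficulty: one must be careful that the representation generated inside $\cA$ by a derivative $E^{(i)}(\blanc,f)$ is not irreducible (it is a successive extension whose Jordan--Hölder factors are copies of the constituents of $\cP(\chi)$), so the statement ``every irreducible \emph{subquotient}'' is exactly what is provable, whereas ``every irreducible \emph{subrepresentation}'' would be false for proper derivatives. One also needs to treat the points where $\chi^2=\norm{\ }^{\pm1}$, but those contribute only to $\cR$, already handled. Finally, in characteristic $2$, where the double Dirichlet series argument of Section \ref{section_non-vanishing} is unavailable, one cannot pin down exactly which Eisenstein series are toroidal, but for the temperedness statement this does not matter: one only needs that a toroidal derivative forces a zero of $L(\chi,s+1/2)$, which is the ``if''-part (Theorem \ref{lemma_zagier_for_derivatives}) read in the contrapositive together with the structure of the toroidal integrals, and then the Hasse--Weil theorem applies uniformly in all characteristics.
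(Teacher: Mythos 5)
Your proposal is correct and follows essentially the same route as the paper: decompose $\cA_\tor$ into cuspidal, Eisenstein and residual parts, invoke Drinfel\cprime d's Ramanujan--Petersson theorem for the cuspidal part and Theorem \ref{no_toroidal_residues} for the residual part, and for the Eisenstein part reduce irreducible subquotients (including those coming from derivatives) to the isomorphism class $\cP(\chi)$ and combine the toroidal-integral formulas with the Hasse--Weil theorem. One small correction: the implication ``toroidal $\Rightarrow$ central zero'' comes from Theorem \ref{thm_Zagier_anis}\,\eqref{anis2} (non-vanishing of $e_T$ for a suitable $\varphi$), not from the ``if''-direction of Theorem \ref{lemma_zagier_for_derivatives} read contrapositively, and the zero may be of a quadratic twist $L(\chi\chi_E,1/2)$ rather than of $L(\chi,1/2)$ itself --- but since $\Re(\chi\chi_E)=\Re\chi$, the Hasse--Weil theorem yields temperedness either way, exactly as in the paper's proof.
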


\begin{proof}
 The space $\cA_\tor$ inherits the decomposition of $\cA$ into a cuspidal, an Eisenstein and a residual part, thus we can investigate these parts, with name, $\cA_{0,\tor}$, $\cE_\tor$ and $\cR_\tor$, separately.

 The Ramanujan-Petersson conjecture claims that all irreducible cuspidal representations for $\GL(2)$ are tempered. This conjecture was proven by Drinfel\cprime d in the function field case (\cite{Drinfeld2}), which implies the corresponding statement for $\cA_{0,\tor}$.

 Recall from paragraph \ref{embedding_principal_series_to_automorphic_forms} that $E(\blanc,\varphi,\chi)$ generates a subrepresentation of $\cA$ that is isomorphic to $\cP(\chi)$. Furthermore, if $V\subset\cA$ is generated by $E(\blanc,\varphi,\chi),\dotsc,E^{(n)}(\blanc,\varphi,\chi)$ as $G_\AA$-module, and $V'\subset\cA$ is generated by $E(\blanc,\varphi,\chi),\dotsc,E^{(n-1)}(\blanc,\varphi,\chi)$, then also the quotient representation $\rquot{V}{V'}$ is isomorphic to $\cP(\chi)$ (or trivial). Thus the isomorphism types of all irreducible subquotients of $\cE_\tor^\nr$ are determined by the irreducible subrepresentations of $\cE_\tor^\nr$ and it suffices to investigate the irreducible subrepresentations of $\cE_\tor(E)$.

 By Theorem \ref{thm_Zagier_anis}, a non-trivial Eisenstein series $E(\blanc,\varphi,\chi)$ is toroidal only if $L(\chi,1/2)=0$ or $L(\chi\chi',1/2)=0$ for some quadratic character $\chi'$. By the Hasse-Weil theorem, $\Re\chi=1/2$ (note that $\Re\chi'=0$ for quadratic $\chi'$), and $E(\blanc,\varphi,\chi)$ generates a tempered representation.

 To conclude the proof, we observe that $\cR_\tor=0$ by Theorem \ref{no_toroidal_residues}.
\end{proof}

\begin{thm}
 \label{thm_admissible}
 For every separable quadratic field extension $E$ of $F$, the representation $\cA_\tor(E)$ is admissible. Consequently, $\cA_\tor$ is an admissible representation.
\end{thm}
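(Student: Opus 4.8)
The plan is to prove the stronger, local statement that $\cA_\tor(E)^{K'}$ is finite-dimensional for every open compact subgroup $K'\subset G_\AA$ — which we may take inside $K$ — and then to read off admissibility of $\cA_\tor$ by intersection. As in the proof of Theorem \ref{thm_tempered}, $\cA_\tor(E)$ inherits the decomposition $\cA=\cA_0\oplus\cE\oplus\cR$, so it suffices to bound each of $\cA_{0,\tor}(E)$, $\cE_\tor(E)$, $\cR_\tor(E)$ on $K'$-invariants. The cuspidal part is a subrepresentation of $\cA_0$, which is admissible by the classical theory of cusp forms (cf.\ \cite{Moeglin-Waldspurger}), so $\cA_{0,\tor}(E)^{K'}$ is finite-dimensional. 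The residual part $\cR_\tor(E)$ lies in $\cR$; by paragraph \ref{residues_form_1-dim_representations} the residues span the one-dimensional representations $\omega\circ\det$ with $\omega^2=1$, and since $E$ is a field, Theorem \ref{E-toroidal_residues}(iii) shows no proper derivative of a residue is $E$-toroidal, so every irreducible constituent of $\cR_\tor(E)$ is such an $\omega\circ\det$. As $\det(K')$ is an open subgroup of $\cO_\AA^\times$, only finitely many quadratic $\omega$ are trivial on it, whence $\cR_\tor(E)^{K'}$ is finite-dimensional.

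The substance lies in the Eisenstein part. First I would write $\cE=\bigoplus_{\{\chi,\chi^{-1}\}}V_\chi$, the sum over classes of quasi-characters with $\chi^2\neq\norm\ ^{\pm1}$, where $V_\chi$ is spanned by all $E^{(j)}(\blanc,\varphi,\chi)$ for Schwartz-Bruhat $\varphi$ and $j\ge0$; by the proof of Theorem \ref{thm_tempered}, $V_\chi$ carries an increasing filtration $V_\chi^{(0)}\subset V_\chi^{(1)}\subset\cdots$ by order of derivative whose successive quotients are isomorphic to $\cP(\chi)$ (or trivial), and the functional equation identifies $V_\chi$ with $V_{\chi^{-1}}$. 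Since $\cP(\chi)\not\simeq\cP(\chi')$ for inequivalent classes (paragraph \ref{isomorphic_principal_series}), any subrepresentation of $\cE$ splits along this decomposition, so $\cE_\tor(E)=\bigoplus_{\{\chi,\chi^{-1}\}}\bigl(V_\chi\cap\cA_\tor(E)\bigr)$.

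Next I would identify $V_\chi\cap\cA_\tor(E)$ with the filtration step $V_\chi^{(\nu-1)}$, where $\nu=\nu(\chi,E)$ is the order of vanishing of $L_E(\chi\circ\N_{E/F},s+1/2)$ at $s=0$; in particular $V_\chi\cap\cA_\tor(E)=0$ when $L_E(\chi\circ\N_{E/F},1/2)\neq0$. The inclusion $\supseteq$ follows from Theorem \ref{thm_Zagier_anis}\eqref{anis1} and the Leibniz rule, exactly as in Theorem \ref{lemma_zagier_for_derivatives}: if $L_E(\chi\circ\N_{E/F},s+1/2)$ vanishes to order $\nu$, then $E^{(n-1)}(\blanc,\varphi,\chi)$ is $E$-toroidal for every $\varphi$ and every $n\le\nu$. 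For $\subseteq$, Theorem \ref{thm_Zagier_anis}\eqref{anis2} provides a $\varphi$ with $e_T(g,\varphi,\chi,s)=\chi(\det g)\norm{\det g}^{s+1/2}$, for which $E^{(\nu)}(\blanc,\varphi,\chi)$ is visibly not $E$-toroidal; since $\cP(\chi)$ is irreducible, the image of $V_\chi\cap\cA_\tor(E)$ in the $\nu$-th and all later graded quotients must vanish, and the identification follows by induction up the tower. In particular $\dim\bigl(V_\chi\cap\cA_\tor(E)\bigr)^{K'}\le\nu(\chi,E)\cdot\dim\cP(\chi)^{K'}<\infty$, using admissibility of the principal series (paragraph \ref{admissibility_of_principal_series}).

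It remains to see that only finitely many classes $\{\chi,\chi^{-1}\}$ contribute to $\cE_\tor(E)^{K'}$. If $V_\chi^{K'}\neq0$ then $\cP(\chi)^{K'}\neq0$, forcing the conductor of $\chi$ to divide a fixed modulus depending only on $K'$; modulo the one-parameter family $\{\norm\ ^s\}$ there are only finitely many such quasi-characters of $\AA^\times/F^\times$, so the relevant $\chi$ lie on finitely many ``lines'' $\{\chi_0\norm\ ^s:s\in\CC\}$. On each line, $L_E(\chi_0\norm\ ^s\circ\N_{E/F},1/2)=L_F(\chi_0,s+1/2)\,L_F(\chi_0\chi_E,s+1/2)$ is, up to non-vanishing elementary factors, a polynomial in $q^{-s}$ of degree bounded in terms of the genus of $F$ and the bounded conductors; hence it vanishes for only finitely many $s$ modulo $2\pi\i/\ln q$, and where it does, $\nu(\chi,E)$ is at most that degree. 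Combining the three parts, $\cA_\tor(E)^{K'}$ is finite-dimensional for all $K'$, so $\cA_\tor(E)$ is admissible; and since $\cA_\tor=\bigcap_E\cA_\tor(E)$ is a subrepresentation of $\cA_\tor(E_0)$ for any single quadratic field extension $E_0/F$, and subrepresentations of admissible representations are admissible, $\cA_\tor$ is admissible as well. I expect the main obstacle to be this last step: establishing the two uniform finiteness inputs — that ``$\cP(\chi)^{K'}\neq0$'' together with ``$L_E(\chi\circ\N_{E/F},1/2)=0$'' leaves only finitely many $\chi$, and that the orders of vanishing $\nu(\chi,E)$ stay uniformly bounded — which is precisely where one exploits that function-field $L$-series are essentially polynomials in $q^{-s}$.
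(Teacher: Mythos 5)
Your proposal is correct and follows essentially the same route as the paper's proof: decompose $\cA_\tor(E)$ into cuspidal, Eisenstein and residual parts, dispose of the first and last via admissibility of $\cA_0$ and $\cR$, and control the Eisenstein part by combining Theorem \ref{thm_Zagier_anis} (both the $L$-series factorisation and the non-vanishing of $e_T$ from part (ii)) with the fact that $L$-series of characters of bounded conductor have only finitely many zeros, each of finite multiplicity. Your write-up merely makes explicit some details (the filtration of the $\chi$-isotypic Eisenstein block, the conductor count over lines $\chi_0\norm\ ^s$) that the paper's proof leaves implicit.
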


\begin{proof}
 We have to show that for every compact open subgroup $K'$ of $K$, the complex vector space $(\cA_\tor(E))^{K'}$ is finite dimensional. Due to the decomposition $\cA_\tor(E)=\cA_{0,\tor}(E)\oplus\cE_\tor(E)\oplus\cR_\tor(E)$, we can verify this condition for each of the summands separately. Since $\cA_0$ and $\cR$ are admissible, the subrepresentations $\cA_{0,\tor}(E)$ and $\cR_\tor(E)$ are so, too.

 The admissibility of $\cE_\tor(E)$ can be seen as follows. Note that $\chi$ is trivial on $N\cO_\AA$ (where $N\in\cO_\AA$ is the ramification) if and only if $E(\blanc,\varphi,\chi)\in\cE^{K_N}$ for $K_N=\{g\in K|g\equiv e \ (\text{mod}\ N)\}$. Every compact open set $K'$ is contained in $K_N$ for some $N$. So it suffices to prove finite-dimensionality only for the spaces of the form $\cE^{K_N}$.

 Every subrepresentation $V$ that contains $E^{(i)}(\blanc,\varphi,\chi)$ for some $\varphi$ and some $\chi$, contains also $E^{(j)}(\blanc,\varphi,\chi)$ for all $j<i$. This means that if a derivative of an Eisenstein series is toroidal, then all derivatives of lower order are also toroidal. Since $L$-series have only finitely many zeros (with finite multiplicities) and by the non-vanishing result for $e_T^{(i)}(g,\varphi,\chi)$ (Theorem \ref{thm_Zagier_anis} \eqref{anis2}), the product $e_T(g,\varphi,\chi)\ L(\chi,1/2)\ L(\chi\chi_T,1/2)$ vanishes only for finitely many $\chi$ that are trivial on $N\cO_\AA$, and it vanishes only with finite multiplicity. 

 Let $\chi$ be a zero of multiplicity $n$ of this product. Then $E(\blanc,\varphi,\chi),\dotsc,E^{(n-1)}(\blanc,\varphi,\chi)$ are toroidal by Theorem \ref{lemma_zagier_for_derivatives}, but $E^{(n)}(\blanc,\varphi,\chi)$ is not. Each of the $E^{(i)}(\blanc,\varphi,\chi)$ corresponds to an irreducible subquotient of $\cE_\tor(E)$ and each irreducible subquotient of $\cE_\tor(E)$ is of this form. Consequently, $\cE_\tor(E)$ is admissible.
\end{proof}

\bigskip

\part{The space of unramified toroidal automorphic forms}
\label{part_dimension}

\noindent
In this part of the paper, we show that the dimension of $\cE_\tor^K$ equals $(g-1)h+1$ for odd characteristics where $g$ is the genus and $h$ is the class number of $F$. To carry out the proof of this dimension formula, we first have to establish a basis for $\cA^K$ that allows us to determine the dimension of $\cE_\tor^K$. In the second section of this part, we will compare this dimensions with the number of zeros of $L(\chi,1/2)$ (with multiplicity) in unramified characters $\chi$.


\section{A basis for the space of unramified automorphic forms}
\label{section_basis}

\begin{pg}
 \label{Eisenstein_part} \label{thm_Li}
 We fix some terminology. For $\lambda\in\CC$, and $\Phi\in\cH_K$, define the {\it space of $\Phi$-eigenfunctions with eigenvalue $\lambda$} as
 $$ \cA(\Phi,\lambda) \ = \ \{f\in\cA\mid\Phi(f)=\lambda f \} \;, $$
 and for a subrepresentation $V\subset\cA$, define $V(\Phi,\lambda)=V\cap\cA(\Phi,\lambda)$. By an $\cH_K$-eigenfunction, we mean a simultaneous eigenfunction for all $\Phi\in\cH_K$. Notice that it suffices to consider the action of the Hecke operators $\Phi_x$ to determine the action of $\cH_K$ since $\cH_K$ is generated as an algebra by $\Phi_x$ and operators that act trivial on $\cA$ (see paragraph \ref{def_lambda_x}). Note that $\cH_k$ acts on $\cA^K$.

 All spaces $\cA(\Phi,\lambda)$ inherit the decomposition of $\cA$ into a cuspidal, an Eisenstein and a residual part. The unramified Eisenstein series $E(\blanc,\chi)$ are $\Phi_x$-eigenfunctions with eigenvalue $\lambda_x(\chi)=q_x^{1/2}(\chi^{-1}(\pi_x) + \chi(\pi_x))$ unless $\chi^2=\norm\ ^{\pm1}$, in which case the residue $R(\blanc,\chi)$ is an $\Phi_x$-eigenfunction with eigenvalue $\lambda_x(\chi)$ (cf.\ paragraph \ref{def_lambda_x}). Note that none of these functions is trivial and the only linear dependencies between these functions are given by the functional equations $E(\blanc,\chi)= \chi^2(\fc)E(\blanc,\chi^{-1})$ resp.\ $R(\blanc,\chi)= \chi^2(\fc)R(\blanc,\chi^{-1})$ (cf.\ paragraphs \ref{functional_equation_eisenstein} and \ref{def_residues}). These functions are $\cH_K$-eigenfunctions and generate the $K$-invariant part $\widetilde\cE^K$ of the generalised Eisenstein part $\widetilde\cE=\cE\oplus\cR$.

 The Jordan decomposition implies that for every $x\in X$, the Hecke operator $\Phi_x$ decomposes $\cA^K$ into a direct sum of subspaces that are the (typically infinite-dimensional) generalised eigenspaces, on which $\Phi_x$ operates as a Jordan block in an appropriate basis. Since all the operators $\Phi_x$ commute, these generalised eigenspaces coincide for the various $\Phi_x$.

 \label{def_lambda_x(chi)}
 Denote the derivatives of $E(\blanc,\chi)$ in the sense of paragraph \ref{def_derivatives} by $E^{(i)}(\blanc,\chi)$. Define for all $\chi\in\Xi_0$, $x\in X$ and $l\geq0$ the value
 $$ \lambda_x^{(l)}(\chi) \ := \ q_x^{1/2}\bigl(\chi^{-1}(\pi_x) + (-1)^{l}\chi(\pi_x)\bigr) \;. $$
 Note that $\lambda_x^{(l)}(\chi)$ only depends on the parity of $l$. Put $\lambda_x(\chi)=\lambda_x^{(l)}(\chi)$ if $l$ is even and $\lambda_x^-(\chi)=\lambda_x^{(l)}(\chi)$ if $l$ is odd.
\end{pg}

\begin{lemma}
 \label{gen_Eisenstein_eigenvalue}
 If $\chi\in\Xi_0$ with $\chi^2\neq\norm \ ^{\pm1}$, then for every $x\in X$,
 $$ \Phi_x E^{(i)}(g,\chi) \ = \ \sum_{k=0}^i \binom ik \bigl(\ln q_x\bigr)^{i-k} \, \lambda_x^{(i-k)}(\chi) \, E^{(k)}(g,\chi) \;. $$
\end{lemma}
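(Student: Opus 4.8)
The plan is to reduce the statement to a statement about the action of $\Phi_x$ on the flat section and then apply the Leibniz rule. Recall from paragraph \ref{def_lambda_x} that the spherical Eisenstein series satisfies $\Phi_x.E(\blanc,\chi) = \lambda_x(\chi)\,E(\blanc,\chi)$ with $\lambda_x(\chi)=q_x^{1/2}(\chi^{-1}(\pi_x)+\chi(\pi_x))$. The key observation is that this eigenvalue identity holds for the whole family $E(\blanc,\chi\norm\ ^s)$ as $s$ varies, since $\chi\norm\ ^s$ is again an unramified quasi-character: explicitly, $\Phi_x.E(g,\chi,s) = \lambda_x(\chi\norm\ ^s)\,E(g,\chi,s)$, where $\lambda_x(\chi\norm\ ^s) = q_x^{1/2}\bigl(\chi^{-1}(\pi_x)q_x^{s} + \chi(\pi_x)q_x^{-s}\bigr)$.

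First I would differentiate the identity $\Phi_x.E(g,\chi,s) = \lambda_x(\chi\norm\ ^s)\,E(g,\chi,s)$ with respect to $s$ exactly $i$ times. Since $\Phi_x$ acts by an integral over $G_\AA$ against a compactly supported locally constant function, differentiation in $s$ commutes with the action of $\Phi_x$, so the left-hand side becomes $\Phi_x.E^{(i)}(g,\chi,s)$. On the right-hand side, the Leibniz rule gives
$$ \frac{d^i}{ds^i}\Bigl(\lambda_x(\chi\norm\ ^s)\,E(g,\chi,s)\Bigr) \ = \ \sum_{k=0}^i \binom ik\,\frac{d^{\,i-k}}{ds^{\,i-k}}\lambda_x(\chi\norm\ ^s)\ \cdot\ E^{(k)}(g,\chi,s)\;. $$
Then I would evaluate at $s=0$. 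The only remaining computation is the $(i-k)$-th derivative of $\lambda_x(\chi\norm\ ^s) = q_x^{1/2}\bigl(\chi^{-1}(\pi_x)e^{s\ln q_x} + \chi(\pi_x)e^{-s\ln q_x}\bigr)$ at $s=0$: each differentiation pulls down a factor $\ln q_x$ from the first term and $-\ln q_x$ from the second, so the $(i-k)$-th derivative at $s=0$ equals $q_x^{1/2}(\ln q_x)^{i-k}\bigl(\chi^{-1}(\pi_x) + (-1)^{i-k}\chi(\pi_x)\bigr) = (\ln q_x)^{i-k}\,\lambda_x^{(i-k)}(\chi)$ in the notation of paragraph \ref{def_lambda_x(chi)}. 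Substituting this back yields exactly the claimed formula.

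There is essentially no obstacle here; the only point that needs a word of care is the justification that $\Phi_x$ commutes with $\frac{d}{ds}$ and that $E(g,\chi,s)$ is genuinely analytic in $s$ near $s=0$ (so that the derivatives $E^{(i)}(g,\chi,s)$ are well-defined and the Leibniz expansion is legitimate). Both are available from the excerpt: analyticity of $E(g,f,s)$ in the domain of convergence together with its meromorphic continuation with a pole only where $\chi^2\norm\ ^{2s}=\norm\ ^{\pm1}$ (paragraph \ref{continuation_eisenstein}) guarantees holomorphy near $s=0$ under the hypothesis $\chi^2\neq\norm\ ^{\pm1}$, and the Hecke action is by convolution against a compactly supported function, so differentiating under the integral sign is routine. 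Hence the lemma follows at once.
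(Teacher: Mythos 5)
Your proposal is correct and follows essentially the same route as the paper: differentiate the eigenvalue identity $\Phi_x.E(g,\chi,s)=\lambda_x(\chi\norm\ ^s)E(g,\chi,s)$ in $s$, apply the Leibniz rule, and use that each derivative of $\lambda_x^{(l)}(\chi\norm\ ^s)$ produces a factor $\ln q_x$ and shifts $l$ by one, evaluating at $s=0$. Your extra remarks on holomorphy near $s=0$ (since $\chi^2\neq\norm\ ^{\pm1}$) and on differentiating under the convolution are exactly the implicit justifications the paper relies on.
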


\begin{proof}
 Observe that
 $$ \frac d{ds} \lambda_x^{(l)}(\chi\norm \ ^s) \ = \ (\ln q_x)\, \lambda_x^{(l+1)}(\chi\norm \ ^s) \;. $$
 The formula is obtained by taking derivatives on both sides of the functional equation for Eisenstein series (cf.\ paragraph \ref{functional_equation_eisenstein}) and applying the Leibniz rule to the right hand side.
\end{proof}

\begin{lemma}
 \label{lemma_chi^2=0_and_lambda_x^-(chi)=0}
 Let $\chi\in\Xi_0$. Then $\chi^2=1$ if and only if $\lambda_x^-(\chi)$ vanishes for all places $x$.
\end{lemma}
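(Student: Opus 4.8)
The plan is to analyze the condition $\lambda_x^-(\chi) = 0$ directly using its definition. Recall that $\lambda_x^-(\chi) = q_x^{1/2}\bigl(\chi^{-1}(\pi_x) + (-1)\chi(\pi_x)\bigr) = q_x^{1/2}\bigl(\chi^{-1}(\pi_x) - \chi(\pi_x)\bigr)$. Since $q_x^{1/2} \neq 0$, this vanishes precisely when $\chi(\pi_x) = \chi^{-1}(\pi_x)$, i.e.\ when $\chi(\pi_x)^2 = 1$, i.e.\ when $\chi^2(\pi_x) = 1$. So the statement reduces to: $\chi^2 = 1$ as a quasi-character on $\AA^\times/F^\times$ if and only if $\chi^2(\pi_x) = 1$ for all places $x$.

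First I would establish the easy direction: if $\chi^2 = 1$ then obviously $\chi^2(\pi_x) = 1$ for every $x$, so $\lambda_x^-(\chi) = 0$ for all $x$. For the converse, suppose $\chi^2(\pi_x) = 1$ for all $x$. Since $\chi \in \Xi_0$ is unramified, $\chi$ is trivial on $\cO_\AA^\times$, hence so is $\chi^2$. By the product formula and strong approximation, the idele class group modulo $\cO_\AA^\times$ is generated by the images of the uniformizers $\pi_x$ together with $F^\times$ (more precisely, $\AA^\times = F^\times \cO_\AA^\times \cdot \langle \pi_x : x \in X\rangle$ up to the subtlety that the $\pi_x$ may not be independent, but they do generate the group of principal ideles of each degree). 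Since $\chi^2$ is trivial on $F^\times$, on $\cO_\AA^\times$, and on each $\pi_x$, it is trivial on all of $\AA^\times$, so $\chi^2 = 1$.

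The main obstacle is being careful about what "generated by uniformizers" means: the uniformizers $\pi_x$ were chosen in $F$ (see paragraph \ref{def_K}), so each $\pi_x$, viewed as an idele, is a principal idele and already lies in $F^\times$; thus $\chi^2(\pi_x) = 1$ holds automatically from $\chi^2$ being trivial on $F^\times$! This means one must instead use the \emph{local} uniformizers $\pi_x \in \cO_x$ embedded in $\AA^\times$ as the idele that is $\pi_x$ in the $x$-component and $1$ elsewhere. The correct statement is that $\AA^\times/(F^\times \cO_\AA^\times)$, which is the divisor class group, is generated by the classes of these local uniformizer ideles, and a quasi-character trivial on $F^\times$, on $\cO_\AA^\times$, and on every local uniformizer idele must be trivial. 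I would phrase the converse argument in exactly these terms. An alternative, cleaner route: a quasi-character of $\AA^\times/F^\times\cO_\AA^\times$ is determined by its values on a set of generators of the (finitely generated) divisor class group, and the local uniformizer ideles generate the free part, so triviality on all of them plus on the torsion (class number) part—which also reduces to local uniformizer ideles since the class group is generated by prime divisors—forces triviality.

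To keep the proof short I would simply write: since $\chi$ is unramified, $\chi^2$ is trivial on $\cO_\AA^\times$; it is trivial on $F^\times$ by definition of $\Xi_0 \subset \Xi$; and the hypothesis says it is trivial on every local uniformizer idele. As these together with $F^\times\cO_\AA^\times$ generate $\AA^\times$, we conclude $\chi^2 = 1$. This is essentially a one-paragraph verification once the reduction $\lambda_x^-(\chi) = 0 \iff \chi^2(\pi_x) = 1$ is noted, so I would not expect any serious difficulty beyond the bookkeeping of which $\pi_x$ (global versus local-idele) is meant.
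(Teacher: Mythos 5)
Your proof is correct and follows essentially the same route as the paper: reduce $\lambda_x^-(\chi)=0$ to $\chi^2(\pi_x)=1$ and then use that the classes of the uniformizer ideles generate $\lrquot{F^\times}{\AA^\times}{\cO_\AA^\times}$, so an unramified quasi-character trivial on $F^\times$ is determined by its values on them. Your remark that $\pi_x$ must be read as the local uniformizer idele (equivalently, $\chi_x(\pi_x)$) rather than the principal idele is the correct reading of the paper's convention and only sharpens, not changes, the argument.
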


\begin{proof}
 Observe that for every $\pi_x$, we have
 $$ q_x^{-1/2}\lambda_x^-(\chi)=\chi^{-1}(\pi_x) - \chi(\pi_x)=0 \ \ \iff\ \ \chi(\pi_x) = \chi^{-1}(\pi_x)
    \ \ \iff \ \ \chi^2(\pi_x) = 1 \;. $$
 Since the $\pi_x$'s generate $\lrquot{F^\times}{\AA^\times}{\cO_\AA^\times}$, the quasi-character $\chi^2$ is determined by its values on the $\pi_x$'s.
\end{proof}

\begin{prop}
 \label{series_of_Eisenstein_derivatives}
 Let $\chi\in\Xi_0$ with $\chi^2\notin\{1,\norm \ ^{\pm1}\}$. Then
 $$ \bigl\{ E(\blanc,\chi),\ E^{(1)}(\blanc,\chi),\ E^{(2)}(\blanc,\chi),\ldots\bigr\} $$
 is linearly independent and spans a vector space on which $\cH_K$ acts. In particular none of these functions vanishes.
\end{prop}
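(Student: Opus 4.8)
The plan is to prove linear independence of $\{E^{(i)}(\blanc,\chi)\}_{i\geq 0}$ by exploiting the action of a single Hecke operator $\Phi_x$, together with the fact that $\chi^2\neq 1$ guarantees a non-zero eigenvalue of the ``odd'' type. First I would recall from Lemma \ref{gen_Eisenstein_eigenvalue} that $\Phi_x$ acts on the span of $\{E^{(k)}(\blanc,\chi)\}_{k\leq i}$ by an upper-triangular matrix whose diagonal entry in the $E^{(i)}$-slot is $\lambda_x^{(i)}(\chi)$, and whose first super-diagonal involves $\binom{i}{i-1}(\ln q_x)\,\lambda_x^{(1)}(\chi) = i\,(\ln q_x)\,\lambda_x^-(\chi)$. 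The key observation is that since $\chi^2\neq 1$, Lemma \ref{lemma_chi^2=0_and_lambda_x^-(chi)=0} provides a place $x$ with $\lambda_x^-(\chi)\neq 0$; fix such an $x$ once and for all. Since also $\chi^2\neq\norm\ ^{\pm1}$, the ``even'' eigenvalue $\lambda_x(\chi)=\lambda_x^{(0)}(\chi)$ is non-zero as well (otherwise $\chi^{-1}(\pi_x)=-\chi(\pi_x)$, which combined with control over enough places would force $\chi^2=\norm\ ^{\pm1}$-type behaviour; more directly, one can simply note that $E(\blanc,\chi)$ is a non-trivial $\Phi_x$-eigenfunction with eigenvalue $\lambda_x(\chi)$, and a non-trivial eigenfunction lying in an irreducible principal series cannot have eigenvalue forcing degeneracy — but in fact all we strictly need below is the non-vanishing of $\lambda_x^-(\chi)$).

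The main step is an induction on $i$ showing $E^{(i)}(\blanc,\chi)\notin\mathrm{span}\{E^{(k)}(\blanc,\chi):k<i\}$, equivalently that the functions $E^{(0)},\dots,E^{(i)}$ are linearly independent. Suppose a relation $\sum_{k=0}^{i} c_k\,E^{(k)}(\blanc,\chi)=0$ holds with $c_i\neq 0$, and $i$ minimal. Apply $\Phi_x$; using Lemma \ref{gen_Eisenstein_eigenvalue} and the vanishing relation to re-express $\Phi_x E^{(i)}$, one obtains a second relation among $E^{(0)},\dots,E^{(i)}$ whose $E^{(i)}$-coefficient is $c_i\lambda_x^{(0)}(\chi)$ and whose $E^{(i-1)}$-coefficient involves $c_i\cdot i\,(\ln q_x)\,\lambda_x^-(\chi) + c_{i-1}\lambda_x^{(0)}(\chi)$. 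Subtracting $\lambda_x^{(0)}(\chi)$ times the original relation kills the top term and leaves a relation of length $\leq i-1$ whose $E^{(i-1)}$-coefficient is $c_i\cdot i\,(\ln q_x)\,\lambda_x^-(\chi)\neq 0$, contradicting minimality of $i$. (The one subtlety is the case $\lambda_x^{(0)}(\chi)=0$; there the subtraction is unnecessary and the relation obtained from applying $\Phi_x$ directly already has non-zero $E^{(i-1)}$-coefficient $c_i\cdot i\,(\ln q_x)\,\lambda_x^-(\chi)$, giving the same contradiction. So the argument works uniformly.) Finally, that $\cH_K$ acts on the span follows immediately from Lemma \ref{gen_Eisenstein_eigenvalue}, since $\Phi_x E^{(i)}(\blanc,\chi)$ is visibly an $F$-linear combination of $E^{(0)},\dots,E^{(i)}$ for every $x$, and $\cH_K$ is generated by the $\Phi_x$ together with operators acting trivially (paragraph \ref{def_lambda_x}); and non-vanishing of each $E^{(i)}(\blanc,\chi)$ is a special case of linear independence.

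The step I expect to be the main obstacle — really the only non-formal point — is justifying that the two relations among $E^{(0)},\dots,E^{(i)}$ can legitimately be manipulated as relations in a finite-dimensional vector space, i.e. that there are no hidden degeneracies; this rests entirely on having chosen $x$ with $\lambda_x^-(\chi)\neq 0$, which is exactly what the hypothesis $\chi^2\neq 1$ buys us via Lemma \ref{lemma_chi^2=0_and_lambda_x^-(chi)=0}. Everything else is the Leibniz-rule bookkeeping already packaged in Lemma \ref{gen_Eisenstein_eigenvalue}. An alternative, perhaps cleaner, phrasing would be to note that $\Phi_x$ restricted to the (a priori possibly smaller) span $V$ of all $E^{(k)}(\blanc,\chi)$ has, in the spanning set ordered by derivative degree, a matrix that is ``shift-plus-scalar'': its action lowers degree by exactly one with non-zero coefficient proportional to $\lambda_x^-(\chi)$ on the nose of each super-diagonal entry, so the operator $\Phi_x - \lambda_x^{(0)}(\chi)\,\mathrm{Id}$ is nilpotent of exact order $i+1$ on $\mathrm{span}\{E^{(0)},\dots,E^{(i)}\}$ — forcing that span to be $(i+1)$-dimensional. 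I would present the induction version as it is the most self-contained.
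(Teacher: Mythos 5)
Your proof is correct and is essentially the paper's own argument: both rest on Lemma \ref{gen_Eisenstein_eigenvalue} together with Lemma \ref{lemma_chi^2=0_and_lambda_x^-(chi)=0}, applying $\Phi_x$ to a shortest putative relation and reading off the $E^{(i-1)}$-coefficient (the paper phrases it as an induction and lets $x$ vary to conclude $\lambda_x^-(\chi)=0$ everywhere, you fix one place with $\lambda_x^-(\chi)\neq 0$ up front --- the same computation). Two small points: the minimal case $i=0$ has to be settled by the known non-vanishing of $E(\blanc,\chi)$ (paragraph \ref{thm_Li}), not by ``linear independence'' itself, and your aside claiming $\lambda_x(\chi)\neq 0$ is not actually justified, though, as you say, it is never used.
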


\begin{proof}
 By Lemma \ref{gen_Eisenstein_eigenvalue}, it is clear that the span of the functions is an $\cH_K$-module. We do induction on $n=\# \bigl\{ E(\blanc,\chi), E^{(1)}(\blanc,\chi),\ldots,E^{(n-1)}(\blanc,\chi)\bigr\}$.
 
 Since $E(\blanc,\chi)$ is not zero, the case $n=1$ follows. For $n>1$, assume that there exists a relation
 $$ E^{(n)}(\blanc,\chi) \ = \ c_{n-1} E^{(n-1)}(\blanc,\chi)\, + \ldots +\, c_0 E(\blanc,\chi) \;. $$
 We derive a contradiction as follows. For every place $x$, we have on the one hand,
 \begin{align*}
  \Phi_x\,E^{(n)}(\blanc,\chi) & \hspace{3,7pt} = \hspace{3,7pt} c_{n-1}\,\Phi_x\,E^{(n-1)}(\blanc,\chi) + \ldots + c_0\,\Phi_x\,E(\blanc,\chi)\\
  & \underset{\ref{gen_Eisenstein_eigenvalue}}{=} c_{n-1}\,\lambda_x(\chi)\,E^{(n-1)}(\blanc,\chi) \ + \ \bigl(\text{terms in lower derivatives of }E(\blanc,\chi)\bigr) \;, 
 \end{align*}
 and on the other hand,
 \begin{align*}
  \Phi_x\,E^{(n)}(\blanc,\chi) & \underset{\ref{gen_Eisenstein_eigenvalue}}{=} \lambda_x(\chi)\,E^{(n)}(\blanc,\chi) + n\,(\ln q_x)\,\lambda_x^-(\chi)\,E^{(n-1)}(\blanc,\chi) \ + \ \bigl(\text{lower terms}\bigr) \\
  & \hspace{3,7pt} =\hspace{3,7pt} \bigl(c_{n-1}\,\lambda_x(\chi) + n\,(\ln q_x)\,\lambda_x^-(\chi)\bigr)\,E^{(n-1)}(\blanc,\chi) \ + \ \bigl(\text{lower terms}\bigr) \;.
 \end{align*}
 By the induction hypothesis, $ \bigl\{ E(\blanc,\chi), E^{(1)}(\blanc,\chi),\ldots,E^{(n-1)}(\blanc,\chi)\bigr\}$ is linearly independent, and therefore
 $$ c_{n-1}\,\lambda_x(\chi) \ = \ c_{n-1}\,\lambda_x(\chi) + n\,(\ln q_x)\,\lambda_x^-(\chi) \;, $$
 which implies that $\lambda_x^-(\chi)=0$ for every place $x$. But this contradicts Lemma \ref{lemma_chi^2=0_and_lambda_x^-(chi)=0}.
\end{proof}

\begin{lemma}
 \label{linear_relation_between_E_and_E^1}
 Let $\chi\in\Xi_0$ such that $\chi^2=1$. Then
 $$ E^{(1)}(\blanc,\chi) \ = \ (\ln q)\ (2g-2)\ E(\blanc,\chi) \;. $$
\end{lemma}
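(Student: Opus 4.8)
The plan is to exploit the functional equation for Eisenstein series in the degenerate case $\chi^2 = 1$, which forces $\chi^{-1} = \chi$, so that $E(\blanc,\chi) = \chi^2(\fc)\,E(\blanc,\chi^{-1}) = \chi^2(\fc)\,E(\blanc,\chi)$ and hence $\chi^2(\fc) = 1$ (we are not yet at $\chi = \norm\ ^{\pm1/2}$, so the Eisenstein series is genuinely defined and nonzero). More precisely, one works with the flat section: writing $E(g,\chi\norm\ ^s,0) = E(g,\chi,s)$ in the notation of paragraph \ref{continuation_eisenstein}, the functional equation of paragraph \ref{functional_equation_eisenstein} reads
$$ E(\blanc,\chi,s) \ = \ c(\chi,s)\ E(\blanc,\chi^{-1},-s) \ = \ \chi^2(\fc)\,\norm\fc^{2s}\ E(\blanc,\chi^{-1},-s), $$
since $\chi\in\Xi_0$. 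First I would substitute $\chi^{-1} = \chi$ (valid because $\chi^2 = 1$) to obtain $E(\blanc,\chi,s) = \chi^2(\fc)\,\norm\fc^{2s}\,E(\blanc,\chi,-s)$, and then use $\chi^2(\fc) = 1$ (the evaluation of a trivial quasi-character) together with $\norm\fc = q^{2g-2}$ (the differental idele has degree $2g-2$, paragraph \ref{def_K}) to rewrite this as
$$ E(\blanc,\chi,s) \ = \ q^{(2g-2)\cdot 2s}\ E(\blanc,\chi,-s). $$

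The second step is to differentiate both sides with respect to $s$ and evaluate at $s=0$. The left side gives $E^{(1)}(\blanc,\chi)$ (in the notation of paragraph \ref{def_derivatives}, identifying $E^{(1)}(\blanc,\chi)$ with $\frac{d}{ds}E(\blanc,\chi,s)|_{s=0}$; note there is a mild sign subtlety because the right-hand argument is $-s$, which I will track carefully). Differentiating $q^{(4g-4)s}E(\blanc,\chi,-s)$ by the product rule yields $(4g-4)(\ln q)\,q^{(4g-4)s}E(\blanc,\chi,-s) - q^{(4g-4)s}E^{(1)}(\blanc,\chi,-s)$, and at $s=0$ this equals $(4g-4)(\ln q)\,E(\blanc,\chi) - E^{(1)}(\blanc,\chi)$. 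Equating with the left side gives $E^{(1)}(\blanc,\chi) = (4g-4)(\ln q)\,E(\blanc,\chi) - E^{(1)}(\blanc,\chi)$, hence $2\,E^{(1)}(\blanc,\chi) = (4g-4)(\ln q)\,E(\blanc,\chi)$, i.e. $E^{(1)}(\blanc,\chi) = (2g-2)(\ln q)\,E(\blanc,\chi)$, which is the claimed identity.

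The one genuine subtlety — and the step I expect to require the most care — is the bookkeeping of the parameter shift between ``derivative of the flat-section family'' and ``derivative in the weight $s$''. One must be sure that $E^{(1)}(\blanc,\chi)$ as defined in paragraph \ref{def_derivatives} is exactly $\frac{d}{ds}\big|_{s=0}E(\blanc, f^0_{\chi\norm\ ^s})$ and that the chain rule for the argument $-s$ contributes the minus sign that produces the factor $2$ on the left rather than a cancellation. Everything else is a one-line application of the functional equation plus $\deg\fc = 2g-2$; no convergence issue arises because we have stayed away from the poles ($\chi^2 = 1 \neq \norm\ ^{\pm1}$ as long as $F$ is not... well, $\norm\ ^0 = 1$ would make $\chi = \norm\ ^0$, which is excluded in the relevant applications, and in any case $E(\blanc,\norm\ ^0)$ has a pole so this lemma is implicitly about $\chi$ with $\chi^2 = 1$, $\chi \neq \norm\ ^0$; I would note this hypothesis is understood from context or add it explicitly).
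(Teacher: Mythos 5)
This is exactly the paper's proof: specialize the functional equation to $\chi^{-1}=\chi$ and $c(\chi,s)=\chi^2(\fc)\,\norm{\fc}^{2s}=\norm{\fc}^{2s}$, differentiate in $s$ at $s=0$, and let the chain rule on the argument $-s$ produce the factor $2$ that you track. One bookkeeping caveat: with the paper's normalization ($\norm{\pi_x}_x=q_x^{-1}$ and $\deg\fc=2g-2$) one has $\norm{\fc}=q^{-(2g-2)}$ rather than your $q^{2g-2}$, which taken literally would flip the sign of the constant; the paper's own proof writes $\norm{\fc}=q^{-(2g-2)}$ but then differentiates with the same sign you use, so your derivation reproduces the lemma as stated, and in its later use (Proposition \ref{series_of_Eisenstein_derivatives_2-torsion}) only the non-vanishing of the constant for $g\neq1$ matters.
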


\begin{proof}
 Since $\chi^2=1$, the functional equation looks like
 $$ E(g,\chi,s) \ = \ \norm{\fc}^{2s}\ E(g,\chi,-s) \;. $$
 Using $\norm{\fc}=q^{-(2g-2)}$ and taking derivatives in $s$ of both sides yields
 $$ E^{(1)}(g,\chi,s) \ = \ - \ \norm{\fc}^{2s}\ E^{(1)}(g,\chi,-s) \ + \ 2 \ (\ln q)\ (2g-2)\ \norm{\fc}^{2s}\ E(g,\chi,-s) \;, $$
 and filling in $s=0$ results in the desired equation.
\end{proof}

\begin{prop}
 \label{series_of_Eisenstein_derivatives_2-torsion}
 Let $\chi\in\Xi_0$ with $\chi^2=1$. Both
 $$ \bigl\{ E(\blanc,\chi),\ E^{(2)}(\blanc,\chi),\ E^{(4)}(\blanc,\chi),\ldots\bigr\} \quad \text{and} \quad
    \bigl\{ E^{(1)}(\blanc,\chi),\ E^{(3)}(\blanc,\chi),\ E^{(5)}(\blanc,\chi),\ldots\bigr\} $$ 
 span a vector space on which $\cH_K$ acts. If $g\neq1$, then both are linearly independent, but they span the same space. If $g=1$, then the former set is linearly independent and all functions in the latter set vanish.
\end{prop}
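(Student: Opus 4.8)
The plan is to treat the three assertions in turn, reducing everything to Lemma~\ref{gen_Eisenstein_eigenvalue} and to the functional equation of $E(\blanc,\chi,s)$. I begin with the elementary consequences of $\chi^2=1$: by Lemma~\ref{lemma_chi^2=0_and_lambda_x^-(chi)=0} we have $\lambda_x^-(\chi)=0$ for every place $x$, i.e.\ $\lambda_x^{(l)}(\chi)=0$ whenever $l$ is odd, whereas $\lambda_x(\chi)=q_x^{1/2}\bigl(\chi(\pi_x)+\chi^{-1}(\pi_x)\bigr)=\pm2q_x^{1/2}\neq0$ since $\chi(\pi_x)\in\{\pm1\}$. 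Feeding $\lambda_x^{(l)}(\chi)=0$ for odd $l$ into Lemma~\ref{gen_Eisenstein_eigenvalue} shows that $\Phi_xE^{(i)}(\blanc,\chi)$ is a linear combination of the $E^{(k)}(\blanc,\chi)$ with $k\equiv i\pmod2$. Since each $E^{(i)}(\blanc,\chi)$ is $K$-invariant and $\cH_K=\CC[\Phi_x,\Psi_x^{\pm1}]$ with the $\Psi_x^{\pm1}$ acting trivially on $\cA^K$ (paragraph~\ref{def_lambda_x}), it follows that $\cH_K$ preserves the span of the even family $\{E(\blanc,\chi),E^{(2)}(\blanc,\chi),\dots\}$ and of the odd family $\{E^{(1)}(\blanc,\chi),E^{(3)}(\blanc,\chi),\dots\}$ separately; this is the first assertion.

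Next I prove that the even family is linearly independent, by the induction of Proposition~\ref{series_of_Eisenstein_derivatives}. The base case is $E(\blanc,\chi)\neq0$ (paragraph~\ref{Eisenstein_part}). For the step, assume the first $n$ members independent and a relation $E^{(2n)}(\blanc,\chi)=\sum_{k<n}c_kE^{(2k)}(\blanc,\chi)$; applying $\Phi_x$ and comparing coefficients of $E^{(2n-2)}(\blanc,\chi)$ on the two sides --- where on the left the parity-restricted Lemma~\ref{gen_Eisenstein_eigenvalue} contributes the extra term $\binom{2n}{2}(\ln q_x)^2\lambda_x(\chi)E^{(2n-2)}(\blanc,\chi)$ --- forces $\binom{2n}{2}(\ln q_x)^2\lambda_x(\chi)=0$, contradicting $\lambda_x(\chi)\neq0$. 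This step uses nothing about the genus.

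Now I use the functional equation to compare the two families. For $\chi^2=1$ it reads $E(g,\chi,s)=\norm{\fc}^{2s}E(g,\chi,-s)$ (cf.\ the proof of Lemma~\ref{linear_relation_between_E_and_E^1}). Setting $\nu=-(2g-2)\ln q$, so that $\norm{\fc}^{2s}=e^{2\nu s}$, one checks that $G(g,\chi,s):=e^{-\nu s}E(g,\chi,s)$ is even in $s$, hence all its odd $s$-derivatives at $s=0$ vanish. Writing $G^{(l)}$ for the $l$-th such derivative and expanding $E(\blanc,\chi,s)=e^{\nu s}G(\blanc,\chi,s)$ by the Leibniz rule gives
$$ E^{(i)}(\blanc,\chi)\ =\ \sum_{2j\leq i}\binom{i}{2j}\,\nu^{i-2j}\,G^{(2j)}\;. $$
If $g=1$, then $\nu=0$ and $E^{(i)}(\blanc,\chi)=G^{(i)}$, which vanishes for odd $i$; thus every member of the odd family is zero, while the even family is independent by the previous paragraph. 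If $g\neq1$, then $\nu\neq0$, and for each $m$ the displayed identity expresses both $E^{(0)}(\blanc,\chi),\dots,E^{(2m)}(\blanc,\chi)$ and $E^{(1)}(\blanc,\chi),\dots,E^{(2m+1)}(\blanc,\chi)$ in terms of $G^{(0)},\dots,G^{(2m)}$ via lower-triangular transition matrices with non-zero diagonal entries ($1$, respectively $(2k+1)\nu$), hence invertible; so all three $(m{+}1)$-element families span one and the same space. Letting $m$ grow shows that the even and odd families span the same space, and then each truncation $\{E^{(1)}(\blanc,\chi),\dots,E^{(2m+1)}(\blanc,\chi)\}$ is an $(m{+}1)$-element spanning set of an $(m{+}1)$-dimensional space, hence independent; so the odd family is independent as well.

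The step I expect to be most delicate is the linear-independence induction, where one must keep precise track of which lower derivatives $E^{(k)}(\blanc,\chi)$ occur in $\Phi_xE^{(i)}(\blanc,\chi)$ and with which non-vanishing coefficients. It is also worth stressing that the ``span the same space'' clause cannot be obtained from the Hecke action --- which never connects the two parities --- and genuinely relies on the functional equation through the auxiliary even function $G$.
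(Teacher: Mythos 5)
Your proof is correct, and its first two thirds (the parity-preserving $\cH_K$-action via Lemma~\ref{gen_Eisenstein_eigenvalue} together with Lemma~\ref{lemma_chi^2=0_and_lambda_x^-(chi)=0}, and the independence of the even family by rerunning the induction of Proposition~\ref{series_of_Eisenstein_derivatives} with the non-vanishing of $\lambda_x(\chi)=\pm2q_x^{1/2}$) coincide with the paper's argument. Where you genuinely diverge is the comparison of the two families when $g\neq1$: the paper invokes Lemma~\ref{linear_relation_between_E_and_E^1} to see that $E^{(1)}(\blanc,\chi)$ is a non-zero multiple of $E(\blanc,\chi)$, asserts that the odd family is then independent ``for the same reasons'' as the even one, and appeals to commutativity of $\cH_K$ to conclude that the two spans agree; you instead pass to the auxiliary function $G(\blanc,\chi,s)=e^{-\nu s}E(\blanc,\chi,s)$ with $\nu=-(2g-2)\ln q$, observe from the functional equation that $G$ is even in $s$, and express both truncated families in terms of $G^{(0)},G^{(2)},\dotsc$ through lower-triangular invertible transition matrices (diagonal entries $1$, respectively $(2k+1)\nu$). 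Your route has the advantage of making the ``same span'' statement and the independence of the odd family completely explicit at every order of derivative, whereas the paper's Hecke-theoretic shortcut is terser and only makes the first-order relation visible; on the other hand, the paper's version stays entirely inside the eigenvalue formalism already set up and reuses Lemma~\ref{linear_relation_between_E_and_E^1} directly. Note that your treatment of $g=1$ (setting $\nu=0$, so $E^{(i)}(\blanc,\chi)=G^{(i)}$ vanishes for odd $i$) is just a repackaging of the paper's differentiation of the functional equation, so no content is lost there.
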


\begin{proof}
 That both sets span $\cH_K$-modules follows from Lemma \ref{gen_Eisenstein_eigenvalue} since by Lemma \ref{lemma_chi^2=0_and_lambda_x^-(chi)=0}, the value $\lambda_x^-(\chi)$ vanishes for all $x\in X$.

 The linear independence of the former set can be shown by the same calculation as in the proof of Proposition \ref{series_of_Eisenstein_derivatives}, provided one knows that $\lambda_x(\chi)\neq0$ for some $x\in X$. This holds since otherwise
 $$ 0 \ = \ \lambda_x(\chi)\, -\, \lambda_x^-(\chi) \ = \ 2q_x \, \chi(\pi_x) $$
 for all $x\in X$, which contradicts the nature of $\chi$.

 If $g\neq1$, then Lemma \ref{linear_relation_between_E_and_E^1} implies 
 that $E^{(1)}(\blanc,\chi)$ is a non-vanishing multiple of $E(\blanc,\chi)$ and
 spans thus the same vector space as $E(\blanc,\chi)$.
 Consequently the latter set in the Proposition is linearly independent for the same reasons as for the former set.
 Since $\cH_K$ is commutative, the two sets in question generate the same space.

 If $g=1$, the vanishing of all $E^{(i)}(\blanc,\chi)$ for odd $i$ 
 follows from the $i$-th derivative of the functional equation at $s=0$, which looks like
 $$ E^{(i)}(\blanc,\chi) \ = \ (-1)^i\, E^{(i)}(\blanc,\chi) \ + \ 
    \underbrace{(2g-2)}\limits_{=\ 0}\ (\text{terms in lower derivatives}) \;. \mbox{\qedhere} $$
\end{proof}

\begin{lemma}
 \label{gen_residue_eigenvalue}
 If $\chi\in\Xi_0$ with $\chi^2=\norm \ ^{\pm1}$, then 
 $$ \Phi_x R^{(i)}(g,\chi) \ = \ \sum_{k=0}^i \binom ik \bigl(\ln q_x\bigr)^{i-k} \, \lambda_x^{(i-k)}(\chi) \, R^{(k)}(g,\chi) $$
 for every $x\in X$, where $\lambda_x^{(l)}(\chi)$ are defined as in Lemma \ref{gen_Eisenstein_eigenvalue}.
\end{lemma}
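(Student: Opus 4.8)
The plan is to run the argument for Lemma~\ref{gen_Eisenstein_eigenvalue} with the Eisenstein series replaced by its residue. Since the pole of $E(g,\chi,s)$ at $s=0$ is simple, the function $s\mapsto s\cdot E(g,\chi,s)$ is holomorphic at $s=0$, and likewise all of its $s$-derivatives take values in $\cA$; in particular $R^{(i)}(g,\chi)=\frac{d^i}{ds^i}\bigl(s\cdot E(g,\chi,s)\bigr)\big|_{s=0}$, and I would work with this holomorphic family throughout.

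The first step is to identify the $\Phi_x$-action on the whole family. Applying the eigenvalue computation of paragraph~\ref{def_lambda_x} to the unramified character $\chi\norm \ ^s$ in place of $\chi$ --- legitimate for every $s$ with $\chi^2\norm \ ^{2s}\neq\norm \ ^{\pm1}$, a set which contains generic $s$ even when $\chi^2=\norm \ ^{\pm1}$ --- gives $\Phi_x.E(\blanc,\chi,s)=\lambda_x^{(0)}(\chi\norm \ ^s)\,E(\blanc,\chi,s)$, where $\lambda_x^{(0)}(\chi\norm \ ^s)=q_x^{1/2}\bigl(\chi^{-1}(\pi_x)q_x^{s}+\chi(\pi_x)q_x^{-s}\bigr)$. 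Both sides are meromorphic in $s$ and agree on an open set, hence agree identically; multiplying through by $s$ turns this into $\Phi_x.\bigl(s\,E(\blanc,\chi,s)\bigr)=\lambda_x^{(0)}(\chi\norm \ ^s)\cdot\bigl(s\,E(\blanc,\chi,s)\bigr)$, now an identity of families holomorphic at $s=0$.

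The second step is to differentiate this identity $i$ times in $s$ and set $s=0$. Because $\Phi_x$ acts by integration against a compactly supported function in the group variable while $s\,E(\blanc,\chi,s)$ is jointly analytic near $s=0$, one may differentiate under the integral, so $\frac{d^i}{ds^i}\big|_{s=0}$ commutes with $\Phi_x$ and the left-hand side becomes $\Phi_x R^{(i)}(\blanc,\chi)$. On the right-hand side I would apply the Leibniz rule together with the relation $\frac{d}{ds}\lambda_x^{(l)}(\chi\norm \ ^s)=(\ln q_x)\,\lambda_x^{(l+1)}(\chi\norm \ ^s)$ that was already exploited in the proof of Lemma~\ref{gen_Eisenstein_eigenvalue}; this yields $\frac{d^{\,j}}{ds^{\,j}}\lambda_x^{(0)}(\chi\norm \ ^s)\big|_{s=0}=(\ln q_x)^{j}\lambda_x^{(j)}(\chi)$ and $\frac{d^{\,i-j}}{ds^{\,i-j}}\bigl(s\,E(\blanc,\chi,s)\bigr)\big|_{s=0}=R^{(i-j)}(\blanc,\chi)$. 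Substituting $k=i-j$ collapses the resulting double-indexed sum into exactly the asserted formula.

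I do not expect a real obstacle here; the argument is formal once one knows that $s$-differentiation commutes with $\Phi_x$ and with taking the residue. The only genuine point is the bookkeeping around the simple pole of $E(g,\chi,s)$ at $s=0$, which ensures that $s\,E(g,\chi,s)$ and all of its $s$-derivatives are honest automorphic forms, so that every manipulation above takes place inside $\cA$ rather than merely among meromorphic families.
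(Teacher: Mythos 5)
Your argument is correct and is essentially the paper's own proof: the paper simply says the proof is the same as for Lemma \ref{gen_Eisenstein_eigenvalue}, noting that the simple pole makes $s\cdot E(\blanc,\chi,s)$ holomorphic at $s=0$ so that taking the residue commutes with $s$-differentiation, which is exactly the bookkeeping you carry out. Your write-up just makes explicit the meromorphic continuation of the eigenvalue identity for $\chi\norm{\ }^s$ and the interchange of $\Phi_x$ with $d/ds$, both of which are fine.
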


\begin{proof}
 The proof is the same as for Lemma \ref{gen_Eisenstein_eigenvalue}. Note that the function $s\cdot E(\blanc,\chi)$ is holomorphic at $s=0$, so the limit in the definition of the residue and the limit in the definition of the derivative with regard to $s$ commute.
\end{proof}

\begin{prop}
 \label{series_of_residue_derivatives}
 Let $\chi\in\Xi_0$ with $\chi^2=\norm \ ^{\pm1}$. Then
 $$ \bigl\{ R(\blanc,\chi), R^{(1)}(\blanc,\chi),R^{(2)}(\blanc,\chi),\ldots\bigr\} $$
 is linearly independent and spans a vector space on which $\cH_K$ acts. In particular, none of these functions vanishes.
\end{prop}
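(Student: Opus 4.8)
The plan is to run the same induction that establishes Proposition~\ref{series_of_Eisenstein_derivatives}, with Lemma~\ref{gen_residue_eigenvalue} replacing Lemma~\ref{gen_Eisenstein_eigenvalue}. First I would note that the span $V$ of $\{R^{(i)}(\blanc,\chi)\}_{i\geq0}$ is an $\cH_K$-module. Indeed, $s\cdot E(\blanc,\chi,s)$ is holomorphic at $s=0$ and $K$-invariant in the first argument for every $s$, so all of its Taylor coefficients $R^{(i)}(\blanc,\chi)$ lie in $\cA^K$; hence $\Psi_x^{\pm1}$ act trivially on them, Lemma~\ref{gen_residue_eigenvalue} gives $\Phi_x R^{(i)}(\blanc,\chi)\in V$ for every place $x$, and since $\cH_K=\CC[\Phi_x,\Psi_x^{\pm1}]_{x\in X}$ (paragraph~\ref{def_lambda_x}) this makes $V$ an $\cH_K$-module.

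Next I would prove linear independence by induction on $n=\#\{R(\blanc,\chi),\dots,R^{(n-1)}(\blanc,\chi)\}$. The base case $n=1$ holds because $E(\blanc,\chi,s)$ has a genuine simple pole at $s=0$ when $\chi^2=\norm \ ^{\pm1}$, so $R(\blanc,\chi)\neq0$ (paragraph~\ref{def_residues}; in fact $R(\blanc,\chi)=R(e,\chi)\cdot\omega\circ\det$ with $\omega^2=1$ by paragraph~\ref{residues_form_1-dim_representations}). For the step, assume the first $n$ functions are independent and that $R^{(n)}(\blanc,\chi)=\sum_{k=0}^{n-1}c_kR^{(k)}(\blanc,\chi)$. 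Evaluating $\Phi_xR^{(n)}(\blanc,\chi)$ in two ways---first by applying Lemma~\ref{gen_residue_eigenvalue} to each summand of the relation, which yields the coefficient $c_{n-1}\lambda_x(\chi)$ on $R^{(n-1)}(\blanc,\chi)$, and second by applying Lemma~\ref{gen_residue_eigenvalue} directly to $R^{(n)}(\blanc,\chi)$ and then substituting the relation into the leading term, which yields $c_{n-1}\lambda_x(\chi)+n(\ln q_x)\lambda_x^-(\chi)$---and comparing via the induction hypothesis forces $\lambda_x^-(\chi)=0$ for every place $x$.

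Finally, $\lambda_x^-(\chi)=0$ for all $x$ contradicts the hypothesis: by Lemma~\ref{lemma_chi^2=0_and_lambda_x^-(chi)=0} it would give $\chi^2=1$, whereas $\chi^2=\norm \ ^{\pm1}$ is non-trivial; equivalently, writing $\chi=\omega\norm \ ^{\pm1/2}$ with $\omega^2=1$ one computes $\lambda_x^-(\chi)=q_x^{1/2}\bigl(\chi^{-1}(\pi_x)-\chi(\pi_x)\bigr)=\pm\,\omega(\pi_x)(q_x-1)\neq0$. This contradiction closes the induction, so $\{R^{(i)}(\blanc,\chi)\}_{i\geq0}$ is linearly independent and, in particular, no $R^{(i)}(\blanc,\chi)$ vanishes. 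I do not expect a real obstacle here: the argument is formally identical to that of Proposition~\ref{series_of_Eisenstein_derivatives}, and the only genuinely new ingredient is the one-line local check that $\lambda_x^-(\chi)\neq0$ under $\chi^2=\norm \ ^{\pm1}$, together with the observation that Lemma~\ref{gen_residue_eigenvalue} has exactly the same shape as Lemma~\ref{gen_Eisenstein_eigenvalue} so that the coefficient bookkeeping transfers unchanged.
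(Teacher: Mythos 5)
Your proposal is correct and follows essentially the same route as the paper, which simply declares the argument completely analogous to Proposition \ref{series_of_Eisenstein_derivatives} and invokes Lemma \ref{lemma_chi^2=0_and_lambda_x^-(chi)=0} to guarantee $\lambda_x^-(\chi)\neq0$ for some place $x$. Your extra explicit check $\lambda_x^-(\chi)=\pm\,\omega(\pi_x)(q_x-1)\neq0$ and the verification of the $\cH_K$-module structure via Lemma \ref{gen_residue_eigenvalue} are just spelled-out versions of the same steps.
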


\begin{proof}
 The proof is completely analogous to that of Proposition \ref{series_of_Eisenstein_derivatives}. Lemma \ref{lemma_chi^2=0_and_lambda_x^-(chi)=0} ensures us of the fact that $\lambda_x^-(\chi)\neq0$ for some $x\in X$.
\end{proof}

\begin{pg}
 \label{def_varE}
 We summarise the discussion as follows. For $\chi\in\Xi_0$, define
 $$ \varE^{(i)}(\blanc,\chi) \ = \ \left\{{\begin{array}{ll} E^{(i)}(\blanc,\chi) &\text{if }\chi^2\notin\{1,\norm \ ^{\pm1}\},\\
                                                  R^{(i)}(\blanc,\chi) &\text{if }\chi^2=\norm \ ^{\pm1},\\
						  E^{(2i)}(\blanc,\chi) &\text{if }\chi^2=1.\end{array}} \right.$$
 and $\varE(\blanc,\chi)=\varE^{(0)}(\blanc,\chi)$. Let $\varcE(\chi)^K\subset\varcE=\cE\oplus\cR$ be the space generated by all derivatives $\varE^{(i)}(\blanc,\chi)$ with $i\geq0$.

 Note that by the functional equations for Eisenstein series and their residues, the linear spaces spanned by the set
 $$ \bigl\{ \varE^{(0)}(\blanc,\chi),\ldots,\ \varE^{(n)}(\blanc,\chi)\bigr\} \quad\text{and}\quad
    \bigl\{ \varE^{(0)}(\blanc,\chi^{-1}),\ldots,\ \varE^{(n)}(\blanc,\chi^{-1})\bigr\} $$
 are the same for all $\chi\in\Xi_0$. In particular, $\varcE(\chi)^K=\varcE(\chi^{-1})^K$.
\end{pg}

\begin{thm}
 \label{main_thm_admissible_form} 
 The space $\cA^K$ of unramified automorphic forms decomposes as an $\cH_K$-module into 
 $$ \cA^K \ \ = \ \ \cA_0^K \ \ \oplus \!\!\bigoplus_{\{\chi,\chi^{-1}\}\,\subset\,\Xi_0}\!\! \varcE(\chi)^K \;. $$
 The vector space $\cA_0^K$ is finite-dimensional and admits a basis of $\cH_K$-eigenfunctions. For every $\chi\in\Xi_0$ and $n\geq0$, $\{\varE(\blanc,\chi),\dotsc\,\varE^{(n-1)}(\blanc,\chi)\}$ is a basis of the unique $\cH_K$-submodule of dimension $n$ in $\varcE(\chi)^K$. 
\end{thm}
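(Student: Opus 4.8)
The plan is to combine the decomposition theorem of Moeglin--Waldspurger with the structural results on derivatives of (residual) Eisenstein series established in Propositions \ref{series_of_Eisenstein_derivatives}, \ref{series_of_Eisenstein_derivatives_2-torsion} and \ref{series_of_residue_derivatives}. First I would recall that $\cA = \cA_0 \oplus \cE \oplus \cR$ as $\cH_K$-modules (paragraph \ref{tensor_product_of_repr}), and that passing to $K$-invariants is exact, so $\cA^K = \cA_0^K \oplus \widetilde\cE^K$ where $\widetilde\cE = \cE\oplus\cR$. The content of paragraph \ref{Eisenstein_part} is that $\widetilde\cE^K$ is spanned by the functions $\varE^{(i)}(\blanc,\chi)$ for $\chi\in\Xi_0$ and $i\geq 0$; the only linear relations among the $i=0$ terms are the functional equations $\varE(\blanc,\chi)=\chi^2(\fc)\varE(\blanc,\chi^{-1})$. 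So the first task is to check that $\widetilde\cE^K = \bigoplus_{\{\chi,\chi^{-1}\}} \varcE(\chi)^K$, i.e.\ that the sum is direct. This follows because distinct pairs $\{\chi,\chi^{-1}\}$ are separated by the $\cH_K$-action: by Lemma \ref{gen_Eisenstein_eigenvalue} (resp.\ \ref{gen_residue_eigenvalue}) each $\varcE(\chi)^K$ lies in the generalised eigenspace of every $\Phi_x$ with eigenvalue $\lambda_x(\chi)$, and since $\AA^\times/(F^\times\cO_\AA^\times)$ is generated by the $\pi_x$, two unramified characters with $\lambda_x(\chi)=\lambda_x(\chi')$ for all $x$ satisfy $\{\chi,\chi^{-1}\}=\{\chi',\chi'^{-1}\}$ (argue as in Lemma \ref{lemma_chi^2=0_and_lambda_x^-(chi)=0}). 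Thus the generalised eigenspace decomposition of $\cA^K$ under the commuting family $\{\Phi_x\}$ refines the claimed direct sum, proving the displayed decomposition.

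Next, for the cuspidal part: $\cA_0^K$ is finite-dimensional because $\cA_0$ is admissible (this is standard and is used in the proof of Theorem \ref{thm_admissible}), and it admits a basis of $\cH_K$-eigenfunctions because $\cH_K$ is a commutative algebra (paragraph \ref{def_lambda_x}) acting on a finite-dimensional space, and cuspidal representations for $\GL(2)$ over a function field are tempered hence in particular the local components are irreducible principal series or special, so the Hecke operators $\Phi_x$ are semisimple on $\cA_0^K$ --- alternatively one invokes the multiplicity-one / diagonalisability of $\cH_K$ on the cuspidal spectrum directly. Either way $\cA_0^K$ splits into simultaneous $\cH_K$-eigenlines.

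Finally, for the Eisenstein/residual pieces I would show that $\{\varE(\blanc,\chi),\dotsc,\varE^{(n-1)}(\blanc,\chi)\}$ is a basis of the unique $n$-dimensional $\cH_K$-submodule of $\varcE(\chi)^K$. Linear independence of the full infinite family $\{\varE^{(i)}(\blanc,\chi)\}_{i\geq0}$ is exactly the content of Propositions \ref{series_of_Eisenstein_derivatives}, \ref{series_of_Eisenstein_derivatives_2-torsion} and \ref{series_of_residue_derivatives} (in the $g=1$, $\chi^2=1$ case the odd derivatives vanish, which is why $\varE^{(i)}$ is defined using $E^{(2i)}$, so the family is still a basis), and these propositions also show the span is $\cH_K$-stable. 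That the span of the first $n$ of them is $\cH_K$-stable follows from the triangularity in Lemma \ref{gen_Eisenstein_eigenvalue} (resp.\ \ref{gen_residue_eigenvalue}): $\Phi_x \varE^{(i)}$ is a combination of $\varE^{(k)}$ with $k\leq i$. For uniqueness: any $\cH_K$-submodule $W\subset\varcE(\chi)^K$ is graded by the filtration, and if $W$ contains some $\sum_{i\le m} c_i\varE^{(i)}$ with $c_m\neq0$ then applying $\Phi_x-\lambda_x(\chi)$ repeatedly (using that the off-diagonal coefficient $m(\ln q_x)\lambda_x^-(\chi)$ or its analogue is nonzero for a suitable $x$, again by Lemma \ref{lemma_chi^2=0_and_lambda_x^-(chi)=0}) produces $\varE^{(m-1)},\dotsc,\varE^{(0)}$ inside $W$; hence $W$ is spanned by $\varE^{(0)},\dotsc,\varE^{(m)}$ and is determined by its dimension. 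The main obstacle is this last uniqueness/rigidity argument --- one must handle the exceptional case $\chi^2=1$, $g=1$ (where the relevant $\lambda_x^-(\chi)$ vanishes but the already-reindexed family still works because the nonvanishing instead comes from $\lambda_x(\chi)\neq0$, as in the proof of Proposition \ref{series_of_Eisenstein_derivatives_2-torsion}) and the case $\chi^2=\norm\ ^{\pm1}$ (residues, where by paragraph \ref{residues_form_1-dim_representations} the $i=0$ module is only $1$-dimensional), so the Jordan-block structure under $\Phi_x$ must be extracted carefully and uniformly from Lemmas \ref{gen_Eisenstein_eigenvalue}--\ref{gen_residue_eigenvalue} together with Lemma \ref{lemma_chi^2=0_and_lambda_x^-(chi)=0}.
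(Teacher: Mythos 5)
Your proposal is correct and follows essentially the same route as the paper: the Moeglin--Waldspurger decomposition restricted to $K$-invariants, multiplicity one for $\cA_0^K$, Propositions \ref{series_of_Eisenstein_derivatives}, \ref{series_of_Eisenstein_derivatives_2-torsion} and \ref{series_of_residue_derivatives} for linear independence and $\cH_K$-stability, and the triangular structure of Lemmas \ref{gen_Eisenstein_eigenvalue} and \ref{gen_residue_eigenvalue} for the $n$-dimensional submodules (the paper simply cites the Jordan decomposition theorem where you spell the argument out; your explicit use of $\Phi_x-\lambda_x(\chi)$ is a fine substitute). One step is asserted too quickly: to separate distinct pairs $\{\chi,\chi^{-1}\}$ you claim that $\lambda_x(\chi)=\lambda_x(\chi')$ for all $x$ forces $\{\chi,\chi^{-1}\}=\{\chi',\chi'^{-1}\}$ ``as in Lemma \ref{lemma_chi^2=0_and_lambda_x^-(chi)=0}''. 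Equality of eigenvalues only gives the place-by-place dichotomy $\chi'(\pi_x)\in\{\chi(\pi_x),\chi(\pi_x)^{-1}\}$, and a priori the choice may vary with $x$; so the fact that the $\pi_x$ generate $\AA^\times/F^\times\cO_\AA^\times$ does not conclude by itself (in Lemma \ref{lemma_chi^2=0_and_lambda_x^-(chi)=0} a single character is tested on generators, which is a genuinely easier situation). The claim is true, but it needs an extra argument; alternatively, do as the paper does and invoke paragraph \ref{thm_Li} together with the criterion that $\cP(\chi)\simeq\cP(\chi')$ only if $\chi'=\chi$ or $\chi'=\chi^{-1}$ (paragraph \ref{isomorphic_principal_series}), which already says that the only linear relations among the (residual) Eisenstein series are the functional equations. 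Two smaller remarks: commutativity of $\cH_K$ plus temperedness does not by itself yield semisimplicity on $\cA_0^K$, so keep your fallback via multiplicity one, which is exactly the paper's argument; and the reindexed exceptional case is all of $\chi^2=1$, not only $g=1$ (for $g\neq1$ the odd derivatives are nonzero but lie in the span of the even ones), though your non-vanishing via $\lambda_x(\chi)\neq0$ is the right mechanism and matches Proposition \ref{series_of_Eisenstein_derivatives_2-torsion}.
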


\begin{proof}
 Note that all the spaces $\cA_0^K$ and $\varcE(\chi^K)$ are indeed $\cH_K$-modules, which follows for the latter spaces from Lemmas \ref{gen_Eisenstein_eigenvalue} and \ref{gen_residue_eigenvalue}. It is well-known that $\cA_0^K$ is finite-dimensional and the multiplicity one theorem implies that it has a basis of $\cH_K$-eigenfunctions (cf.\ \cite[Section 3.3]{Bump}). Since $\cP(\chi)\simeq\cP(\chi')$ only if $\chi'=\chi$ or $\chi'=\chi^{-1}$, there is no other linear relation of Eisenstein series than the one that is given by the functional equation. 
 
 Lemmas \ref{gen_Eisenstein_eigenvalue} and \ref{gen_residue_eigenvalue} imply that for every $\chi\in\Xi_0$, $\varcE(\chi)^K$ is an $\cH_K$-module. Propositions \ref{series_of_Eisenstein_derivatives}, \ref{series_of_Eisenstein_derivatives_2-torsion} and \ref{series_of_residue_derivatives} ensure that the described bases are indeed linearly independent.

 The uniqueness of the $n$-dimensional subspaces in the theorem follows from the Jordan decomposition theorem. We furthermore see that $\{\varE^{(i)}(\blanc,\chi)\}_{\substack{\{\chi,\chi^{-1}\}\subset\Xi_0},i\geq0}$ is linearly independent. Finally, it follows from Propositions \ref{series_of_Eisenstein_derivatives}, \ref{series_of_Eisenstein_derivatives_2-torsion} and \ref{series_of_residue_derivatives} together with the general remarks from paragraph \ref{thm_Li} that the decomposition exhausts $\cA^K$.
\end{proof}


\section{The dimension of the space of unramified toroidal Eisenstein series}
\label{section_dimension}

\begin{lemma}
 \label{even_mult}
 Let $\chi\in\Xi_0$ satisfy $\chi^2=1$. If $L(\chi,1/2)=0$, then $1/2$ is a zero of even multiplicity.
\end{lemma}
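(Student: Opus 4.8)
The plan is to combine the functional equation of $L$-series with the factorisation $\zeta_E=\zeta_F\cdot L(\chi,\blanc)$, using that the zeta function of a function field satisfies a functional equation with trivial root number.

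First I record a general fact: for \emph{any} global function field $K$, the functional equation of paragraph \ref{cont_L_series} (applied over $K$ to the trivial character $1\in\Xi_0$) reads
$$ \zeta_K(1/2+s) \ = \ c_K^{\,s}\ \zeta_K(1/2-s) $$
as meromorphic functions of $s$, where $\fc_K$ is a differential idele of $K$ and $c_K=\norm{\fc_K}$ is a positive constant; in particular no root number occurs. Multiplying by the nowhere-vanishing entire function $c_K^{-s/2}$ shows that $s\mapsto c_K^{-s/2}\zeta_K(1/2+s)$ is an \emph{even} meromorphic function, and it is holomorphic at $s=0$ since $\zeta_K$ has no pole at $1/2$. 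An even function holomorphic at the origin vanishes there to even order; hence $\zeta_K$ vanishes at $1/2$ to even order, for every function field $K$.

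Now I treat $\chi\in\Xi_0$ with $\chi^2=1$ in two cases. If $\chi$ is trivial, then $L(\chi,\blanc)=\zeta_F$ and we are done by the general fact. If $\chi$ is nontrivial, then $\chi$ has order exactly $2$; being unramified it equals $\chi_E$ for a separable quadratic \emph{field} extension $E/F$, which is moreover unramified, by class field theory (paragraph \ref{class_field_theory}). Applying the identity of paragraph \ref{def_chi_T} to the trivial character of $\AA^\times$ gives $\zeta_E(s)=\zeta_F(s)\,L(\chi,s)$; since $\ord_{1/2}$ is additive over products of meromorphic functions, $L(\chi,\blanc)$ vanishes at $1/2$ to order $\ord_{1/2}\zeta_E-\ord_{1/2}\zeta_F$, which is even minus even, hence even. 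In either case the multiplicity of $1/2$ as a zero of $L(\chi,\blanc)$ is even; since by hypothesis it is at least $1$, it is even and positive.

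There is no serious obstacle; the only points needing care are that $1/2$ is not a pole of $\zeta_F$ or of $\zeta_E$, so that the functional equation may genuinely be localised at that point, and the bookkeeping with the factor $c_K^{\,s}$. Equivalently, one may argue directly from $L(\chi,1/2+s)=\chi(\fc)\,\norm{\fc}^{\,s}\,L(\chi,1/2-s)$ (valid since $\chi^{-1}=\chi$) and reduce to showing that the root number $\chi(\fc)$ equals $+1$; the factorisation $\zeta_E=\zeta_F\cdot L(\chi,\blanc)$ together with the root-number-free functional equations of $\zeta_E$ and $\zeta_F$ forces exactly this (equivalently, $F$ has a rational theta characteristic).
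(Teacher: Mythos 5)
Your proof is correct, but it takes a genuinely different route from the paper. The paper's proof stays with the single $L$-function: it first notes that the canonical class, represented by $\fc$, is a square in the divisor class group (Weil), so $\chi(\fc)=1$ for quadratic $\chi$, and then differentiates the functional equation $L(\chi,1/2+s)=\chi(\fc)\,\norm{\fc}^{s}\,L(\chi^{-1},1/2-s)$ $n$ times, using the vanishing of the lower derivatives, to force $L^{(n)}(\chi,1/2)=0$ for odd $n$ --- which is exactly the parity step you package as ``multiply by $c_K^{-s/2}$ and use evenness''. You instead avoid the theta-characteristic input altogether: you get evenness of the central order for \emph{any} zeta function from its root-number-free functional equation (paragraph \ref{cont_L_series} with trivial character, noting $1/2$ is not a pole), identify a nontrivial unramified quadratic $\chi$ with $\chi_E$ for an unramified separable quadratic field extension $E/F$ (paragraphs \ref{class_field_theory} and \ref{product_of_all_L_series}), and then use $\zeta_E=\zeta_F\cdot L(\chi,\blanc)$ from paragraph \ref{def_chi_T} together with additivity of $\ord_{1/2}$. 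What each approach buys: the paper's argument makes the decisive arithmetic fact ($\chi(\fc)=1$, i.e.\ the canonical class is a square) explicit and works character by character without invoking the extension $E$; yours runs on ingredients the paper already has set up (class field theory and the Hecke factorization), and, as your closing remark observes, the identity-theorem comparison of the two functional equations of $\zeta_E=\zeta_F\,L(\chi,\blanc)$ even recovers $\chi(\fc)=1$ (and $\norm{\fc_E}_E=\norm{\fc}^2$) as a by-product, so Weil's theorem on the canonical class is not needed as an external input.
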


\begin{proof}
 Since the canonical divisor, which is represented by $\fc$, is a square in the divisor class group (cf.\ \cite[XIII.12, thm. 13]{Weil}) $\chi(\fc)=1$. Let $L^{(i)}(\chi,s)$ vanish at $s=1/2$ for all $i=0,\ldots,n-1$, for some odd $n$. We will show that in this case the multiplicity of $1/2$ as a zero must be strictly larger than $n$. Taking into account the vanishing of lower derivatives and $\chi(\fc)=1$, the $n$-th derivatives of both sides of the functional equation (cf.\ paragraph \ref{cont_L_series}) are
 $$ L^{(n)}(\chi,1/2) = (-1)^{n}\,L^{(n)}(\chi^{-1},1/2). $$
 Thus $L^{(n)}(\chi,1/2)=0$ as $(-1)^{n}=-1$ for odd $n$.
\end{proof}

\begin{pg}
 The $\cH_K$-module $\cE^K_\tor$ inherits a decomposition
 $$ \cE^K_\tor \ = \ \bigoplus_{\substack{\{\chi,\chi^{-1}\}\,\subset\,\Xi_0\\ \chi^2\neq\norm \ ^{\pm1}}}\!\! (\cE^K_\tor\,\cap\,\varcE(\chi)^K) $$
 from $\cA^K$ (see Theorem \ref{main_thm_admissible_form}). Only finitely many terms $\cE^K_\tor\,\cap\,\varcE(\chi)^K$ are nontrivial. Each of these terms has a basis of the form 
 $$\bigl\{ \varE(\blanc,\chi), \varE^{(1)}(\blanc,\chi),\ldots,\varE^{(n-1)}(\blanc,\chi)\bigr\} \;,$$
 where $n$ is its complex dimension.
 
 Thus it suffices to investigate Eisenstein series of the form $E(\blanc,\chi)$ and their derivatives $E^{(i)}(\blanc,\chi)$ for unramified quasi-characters $\chi$ in order to determine $\cE^K_\tor$. We will, however, state and prove theorems for general quasi-characters $\chi\in\Xi$ where no additional effort is required.
\end{pg} 

\begin{pg}
 \label{zero_of_L_gives_toroidal_Eisenstein_series}
 Let $\chi\in\Xi_0$ such that $\chi^2\neq\norm \ ^{\pm1}$. We say that $\chi$ is a zero of $L(\blanc,1/2)$ of order $n$ if $L(\chi,s+1/2)$ vanishes to order $n$ at $s=0$. By Theorem \ref{lemma_zagier_for_derivatives}, we see that if $\chi$ is a zero of $L(\blanc,1/2)$ of order $n$, then all the functions $E(\blanc,\chi),\dotsc,E^{(n-1)}(\blanc,\chi)$ are toroidal.

 The functional equation for $L$-series (paragraph \ref{cont_L_series}) implies that zeros come in pairs: $\chi$ is a zero of order $n$ if and only if $\chi^{-1}$ is a zero of order $n$, and if $\chi=\chi^{-1}$, then $\chi$ is a zero of even order (Lemma \ref{even_mult}). We call $\{\chi,\chi^{-1}\}$ a {\it pair of zeros of order $n$} if $\chi$ is a zero of order $n$ in case $\chi\neq\chi^{-1}$, or if $\chi$ is a zero of order $2n$ in case $\chi=\chi^{-1}$.

 Due to the definition of $\varcE(\chi)^K$ (in particular, notice the difference when $\chi^2=1$; see paragraph \ref{def_varE}) and because $\varcE(\chi)^K=\varcE(\chi^{-1})^K$, we obtain that if $\{\chi,\chi^{-1}\}$ is a pair of zeros of order $n$, then $\varE(\blanc,\chi),\dotsc,\varE^{(n-1)}(\blanc,\chi)$ are toroidal and span an $n$-dimensional $\cH_K$-module provided that $\chi^2\neq\norm \ ^{\pm1}$.

 We summarise this discussion.
\end{pg}

\begin{lemma}
 \label{zero_of_L_implies_toroidal}
 Let $\chi\in\Xi_0$ such that $\chi^2\neq\norm \ ^{\pm1}$ and $i\geq0$.
 \begin{enumerate}
  \item\label{tor1} Let $E/F$ be a separable quadratic algebra extension. Then $\varE^{(i)}(\blanc,\chi)$ is $E$-toroidal if and only if $\{\chi,\chi^{-1}\}$ is a zero of $L(\blanc,1/2)L(\blanc\chi_E,1/2)$ that is at least of order $i$.
  \item\label{tor2} If $\{\chi,\chi^{-1}\}$ is a pair of zeros of $L(\blanc,1/2)$ of order $n$, then $\varE(\blanc,\chi),\dotsc,\varE^{(n-1)}(\blanc,\chi)$ are toroidal.\qed
 \end{enumerate}
\end{lemma}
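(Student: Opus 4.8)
The plan is to deduce both parts from the formula of Theorem~\ref{lemma_zagier_for_derivatives} together with the non-vanishing assertions of Theorems~\ref{thm_Zagier_anis}\eqref{anis2} and~\ref{thm_Zagier_split}\eqref{split2}; part~\eqref{tor2} is essentially a restatement of the discussion in paragraph~\ref{zero_of_L_gives_toroidal_Eisenstein_series}. Throughout, given a separable quadratic algebra extension $E$ let $T$ be a corresponding maximal torus, so that $\chi_T=\chi_E$ and $L_E(\chi\circ\N_{E/F},s)=L_F(\chi,s)\,L_F(\chi\chi_E,s)$ (paragraph~\ref{def_chi_T}).

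For~\eqref{tor2}: if $\chi^2\neq1$ then $\varE^{(i)}(\blanc,\chi)=E^{(i)}(\blanc,\chi)$, and since $L_F(\chi,s+1/2)$ vanishes at $s=0$ to order at least $n$, the concluding sentence of Theorem~\ref{lemma_zagier_for_derivatives} shows that $E^{(i)}(\blanc,\chi)$ is toroidal for every $i\leq n-1$ (the vanishing hypothesis there does not depend on the torus, so this is toroidality with respect to all $E$). If $\chi^2=1$ then $\varE^{(i)}(\blanc,\chi)=E^{(2i)}(\blanc,\chi)$ and, by Lemma~\ref{even_mult}, $L_F(\chi,s+1/2)$ vanishes at $s=0$ to order at least $2n$; the same argument gives that $E^{(2i)}(\blanc,\chi)$ is toroidal for $2i\leq 2n-1$, i.e.\ for $i\leq n-1$. (The case $\chi^2=\norm \ ^{\pm1}$ is excluded by hypothesis.)

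For~\eqref{tor1} I would use the identity
$$ E_T^{(m)}(g,\varphi,\chi) \ = \ \frac{d^m}{ds^m}\Bigl(e_T(g,\varphi,\chi,s)\,L_E(\chi\circ\N_{E/F},s+1/2)\Bigr)\Big|_{s=0}\;, $$
which is the Leibniz expansion underlying Theorem~\ref{lemma_zagier_for_derivatives}. The ``if''-direction follows by taking $\varphi=\varphi_0$, so that $E(\blanc,\varphi_0,\chi)=E(\blanc,\chi)$ and $e_T(g,\varphi_0,\chi,s)$ is holomorphic at $s=0$: if $L_E(\chi\circ\N_{E/F},s+1/2)$ vanishes at $s=0$ to the asserted order, then the right-hand side vanishes for $m=i$ (resp.\ $m=2i$), and $\varE^{(i)}(\blanc,\chi)$ is $E$-toroidal. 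For the ``only if''-direction I would take the test function $\varphi_T$ of Theorem~\ref{thm_Zagier_anis}\eqref{anis2} (resp.\ \ref{thm_Zagier_split}\eqref{split2}), for which $e_T(g,\varphi_T,\chi,s)=\chi(\det g)\norm{\det g}^{s+1/2}$; inserting this and letting $\norm{\det g}$ run over $q^{\ZZ}$, a nonzero-polynomial argument shows that $E^{(m)}(\blanc,\varphi_T,\chi)$ is $E$-toroidal precisely when $L_E(\chi\circ\N_{E/F},s+1/2)$ vanishes at $s=0$ to order $>m$. It then remains to pass between $\varphi_T$ and the spherical vector: the support of $\varphi_T$ is an open compact $\cO_\AA$-submodule of $\AA^2$, hence a right translate $\cO_\AA^2 h$ of $\cO_\AA^2$, so $E^{(m)}(\blanc,\varphi_T,\chi)$ is a $\CC$-linear combination with invertible triangular transition matrix of right translates of $E^{(0)}(\blanc,\chi),\dots,E^{(m)}(\blanc,\chi)$; and since $\cA_\tor(E)$ is a $G_\AA$-submodule of $\cA$ (paragraph~\ref{toroidal_form_def_by_H}), and $E^{(i)}(\blanc,\chi)$ generates $E^{(0)}(\blanc,\chi),\dots,E^{(i)}(\blanc,\chi)$ as an $\cH_K$-module by Lemmas~\ref{gen_Eisenstein_eigenvalue} and~\ref{lemma_chi^2=0_and_lambda_x^-(chi)=0} (using the even-derivative variant of Proposition~\ref{series_of_Eisenstein_derivatives_2-torsion} when $\chi^2=1$), toroidality of $\varE^{(i)}(\blanc,\chi)$ becomes equivalent to that of all $E^{(m)}(\blanc,\varphi_T,\chi)$ with $m\leq i$ (resp.\ $m\leq 2i$), and hence to the asserted order of vanishing of $L_E(\chi\circ\N_{E/F},\blanc)$ at $1/2$.

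The main obstacle is this ``only if''-direction of~\eqref{tor1}: one has to turn $E$-toroidality of the \emph{single} spherical form $\varE^{(i)}(\blanc,\chi)$ into a statement about the order of vanishing of $L_E(\chi\circ\N_{E/F},\blanc)$, which requires both the sharp non-vanishing input $e_T(g,\varphi_T,\chi,0)\neq0$ and the bookkeeping identifying the $\cH_K$-module generated by $\varE^{(i)}(\blanc,\chi)$ with the linear span of its lower derivatives. The case $\chi^2=1$ is the most delicate point: only even-order derivatives of $E(\blanc,\chi)$ lie in $\varcE(\chi)^K$, the odd ones have to be re-expressed via Lemma~\ref{linear_relation_between_E_and_E^1} and Proposition~\ref{series_of_Eisenstein_derivatives_2-torsion}, and Lemma~\ref{even_mult} is what matches the resulting bound with the ``pair of zeros'' convention of paragraph~\ref{zero_of_L_gives_toroidal_Eisenstein_series}.
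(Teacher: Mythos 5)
Your proposal is correct and follows essentially the paper's own route: the paper disposes of this lemma as a summary of paragraph~\ref{zero_of_L_gives_toroidal_Eisenstein_series} resting on Theorems~\ref{thm_Zagier_anis}, \ref{thm_Zagier_split} and~\ref{lemma_zagier_for_derivatives} (with Lemma~\ref{even_mult} and the conventions of paragraph~\ref{def_varE} handling $\chi^2=1$), and you use exactly these ingredients, merely making explicit the ``only if'' bookkeeping -- that $\varphi_T$ is a scalar multiple of a translate $\varphi_{0,h}$ of the spherical test function, that $\cA_\tor(E)$ is a $G_\AA$-module, and the triangular $\cH_K$-module structure of the derivatives -- which the paper leaves implicit (and reuses later in Theorem~\ref{thm_admissible}). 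One small caveat: for the fixed test function $\varphi_T$ the identity $e_T(g,\varphi_T,\chi,s)=\chi(\det g)\norm{\det g}^{s+1/2}$ is only guaranteed by Theorem~\ref{thm_Zagier_anis}\eqref{anis2} for the $g$-dependent choice $\varphi_{T,g^{-1}}$ (hence at $g=e$), so the ``letting $\norm{\det g}$ run over $q^{\ZZ}$'' device is not available as stated; but it is also unnecessary, since your reduction to toroidality of all $E^{(m)}(\blanc,\varphi_T,\chi)$ with $m\leq i$ (resp.\ $2i$) plus evaluation at $g=e$, where $e_T(e,\varphi_T,\chi,s)\equiv1$, yields the order of vanishing by induction on $m$.
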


\begin{thm}
 \label{thm_dimension}
 If $q$ is odd, the dimension of $\cE^K_\tor$ equals $(g-1)h+1$, where $g$ is the genus and $h$ the class number of $F$. If $q$ is even, then $(g-1)h+1$ is a lower bound for the dimension of $\cE^K_\tor$.
\end{thm}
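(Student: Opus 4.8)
The plan is to reduce the statement to a count of zeros of the $L$-functions of unramified characters, via the structure theory of $\cA^K$ from Section \ref{section_basis}. First I would invoke the decomposition $\cE^K_\tor=\bigoplus_{\{\chi,\chi^{-1}\}}(\cE^K_\tor\cap\varcE(\chi)^K)$ (the sum over pairs $\{\chi,\chi^{-1}\}\subset\Xi_0$ with $\chi^2\neq\norm\ ^{\pm1}$, recorded just before Lemma \ref{zero_of_L_implies_toroidal}) together with Theorem \ref{main_thm_admissible_form}: each summand $\cE^K_\tor\cap\varcE(\chi)^K$ is an $\cH_K$-submodule of $\varcE(\chi)^K$, hence the unique submodule of its dimension, namely the span of $\varE(\blanc,\chi),\dotsc,\varE^{(d_\chi-1)}(\blanc,\chi)$, where $d_\chi:=\dim(\cE^K_\tor\cap\varcE(\chi)^K)$. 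So $\dim\cE^K_\tor=\sum_{\{\chi,\chi^{-1}\}}d_\chi$, and it remains to determine the $d_\chi$.

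To compute $d_\chi$ I would use the toroidality criteria. Lemma \ref{zero_of_L_implies_toroidal}\,\eqref{tor2}, combined with Lemma \ref{even_mult} and the case distinction of paragraph \ref{def_varE} (which replaces ordinary derivatives by even-order derivatives precisely when $\chi^2=1$), shows that if $\{\chi,\chi^{-1}\}$ is a pair of zeros of $L(\blanc,1/2)$ of order $n$ in the sense of paragraph \ref{zero_of_L_gives_toroidal_Eisenstein_series}, then $\varE(\blanc,\chi),\dotsc,\varE^{(n-1)}(\blanc,\chi)$ are all toroidal; hence $d_\chi\geq n$, for \emph{every} $q$, which already yields the lower bound $\dim\cE^K_\tor\geq\sum_{\{\chi,\chi^{-1}\}}n$. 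For odd $q$ the reverse inequality follows from Theorem \ref{toroidal_if_and_only_L-series=0} --- equivalently, from Lemma \ref{zero_of_L_implies_toroidal}\,\eqref{tor1} applied to all separable quadratic algebra extensions $E$, the twisted factors $L(\blanc\chi_E,1/2)$ being removed from play by the non-vanishing result Theorem \ref{thm_non-vanish} --- so that $\varE^{(n)}(\blanc,\chi)$ is not toroidal and $d_\chi=n$.

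It then remains to evaluate $\sum_{\{\chi,\chi^{-1}\}}n$. Put $\nu(\chi):=\ord_{1/2}L(\chi,\blanc)$; for $\chi^2\neq\norm\ ^{\pm1}$ one has $\nu(\chi)\geq0$, since the only unramified $\chi$ for which $L(\chi,\blanc)$ has a pole at $1/2$ are $\norm\ ^{\pm1/2}$ (paragraph \ref{cont_L_series}), and those are among the characters excluded from the sum. By the very definition of a pair of zeros, $\sum_{\{\chi,\chi^{-1}\}}n=\tfrac12\sum_{\chi:\,\chi^2\neq\norm\ ^{\pm1}}\nu(\chi)$: a pair with $\chi\neq\chi^{-1}$ contributes $n=\nu(\chi)=\nu(\chi^{-1})$ to two terms of the right-hand sum, and a pair with $\chi=\chi^{-1}$ contributes $\nu(\chi)=2n$ to one. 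Of the characters excluded by $\chi^2\neq\norm\ ^{\pm1}$ --- that is, the $\omega\norm\ ^{\pm1/2}$ with $\omega^2=1$ --- only $\norm\ ^{\pm1/2}$ have $\nu\neq0$, namely $\nu=-1$ (the two simple poles of $\zeta_F$), the others contributing $0$ by paragraph \ref{L_series_has_no_zero_in_0,1}; hence $\sum_{\chi:\,\chi^2\neq\norm\ ^{\pm1}}\nu(\chi)=2+\sum_{\chi\in\Xi_0}\nu(\chi)$. Now partition $\Xi_0$ into its $h$ cosets modulo $\{\norm\ ^s:s\in\CC\}$, indexed by the characters of the degree-zero divisor class group (a finite group of order $h$, the class number of $F$), and let $\chi=\chi_0\norm\ ^s$ run through a fixed coset, so that $\nu(\chi)=\ord_{1/2+s}L(\chi_0,\blanc)$ and the coset sum equals the number of zeros minus the number of poles of $L(\chi_0,\blanc)$ as a rational function of $q^{-s}$. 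For nontrivial finite-order $\chi_0$ (which is not of the form $\norm\ ^s$) the function $L(\chi_0,\blanc)$ is a polynomial in $q^{-s}$ with constant term $1$, and it has degree $2g-2$ by the functional equation $L(\chi_0,1/2+s)=\chi_0(\fc)\,\norm{\fc}^s\,L(\chi_0^{-1},1/2-s)$ with $\norm{\fc}=q^{-(2g-2)}$; so this coset contributes $2g-2$. For $\chi_0$ trivial the function is $\zeta_F$, and $\zeta_F(s)=P(q^{-s})/\bigl((1-q^{-s})(1-q^{1-s})\bigr)$ with $\deg P=2g$ and $P(0)=1$ (cf.\ \cite{Weil}), giving $2g$ zeros and $2$ poles, again a contribution of $2g-2$. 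Hence $\sum_{\chi\in\Xi_0}\nu(\chi)=h(2g-2)$ and $\sum_{\chi:\,\chi^2\neq\norm\ ^{\pm1}}\nu(\chi)=h(2g-2)+2=2\bigl((g-1)h+1\bigr)$, so $\dim\cE^K_\tor=(g-1)h+1$ for odd $q$, and $\geq(g-1)h+1$ in general.

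The step requiring the most care is the bookkeeping that reconciles the three normalisations of $\varE^{(i)}(\blanc,\chi)$ with the pair-of-zeros convention of paragraph \ref{zero_of_L_gives_toroidal_Eisenstein_series}: one must verify that passing to even-order derivatives when $\chi^2=1$ is compensated exactly --- by way of the evenness of $\ord_{1/2}L(\chi,\blanc)$ from Lemma \ref{even_mult} --- so that the factor $\tfrac12$ in the final sum is consistent and the trivial coset really accounts for $2g$ (and not $g$) worth of toroidal derivatives. The only other ingredient of independent substance is identifying the degree of $L(\chi_0,\blanc)$ on the nontrivial cosets from the functional equation; everything else is routine counting.
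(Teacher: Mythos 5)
Your proof is correct, and its structural half coincides with the paper's: the decomposition of $\cE^K_\tor$ into the pieces $\cE^K_\tor\cap\varcE(\chi)^K$ via Theorem \ref{main_thm_admissible_form}, the lower bound from Lemma \ref{zero_of_L_implies_toroidal} valid for all $q$, and the exact value $d_\chi=n$ for odd $q$ from Theorem \ref{toroidal_if_and_only_L-series=0} (equivalently Lemma \ref{zero_of_L_implies_toroidal}\,\eqref{tor1} with the twisted factor removed by Theorem \ref{thm_non-vanish}) are exactly the paper's reduction to counting pairs of zeros of unramified $L$-series at $1/2$. Where you genuinely diverge is the counting step itself. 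The paper bundles the $h$ unramified $L$-series into a single zeta function: by class field theory $\prod_{i=1}^{h}L_F(\omega_i,s+1/2)=\zeta_{F'}(s+1/2)$ for the degree-$h$ unramified abelian extension $F'/F$, Weil's theorem gives $2g_{F'}$ zeros with the numerator non-vanishing at the residual points (so no zero is absorbed by a residue), and the Hurwitz formula $2g_{F'}-2=h(2g-2)$ yields $(g-1)h+1$. You instead count coset by coset inside $\Xi_0$: the functional equation with $\norm{\fc}=q^{-(2g-2)}$ forces $L(\chi_0,\blanc)$ to be a polynomial of degree $2g-2$ in $q^{-s}$ for each of the $h-1$ nontrivial finite-order $\chi_0$, the trivial coset contributes $2g$ zeros minus $2$ poles, and the residual bookkeeping is done by hand (adding back $2$ for the poles at $\norm{\ }^{\pm1/2}$ and using paragraph \ref{L_series_has_no_zero_in_0,1} for nontrivial quadratic $\omega$), giving $h(2g-2)+2=2\bigl((g-1)h+1\bigr)$ zeros and hence the same dimension. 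The paper's route buys a one-line quotation of Weil's results for a single field; yours avoids the auxiliary field $F'$, Hurwitz, and the attendant subtleties (e.g.\ possible constant-field extension in $F'$), at the modest cost of invoking rationality/polynomiality of the individual unramified $L$-series in $q^{-s}$, which lies in the same circle of standard facts. Your explicit reconciliation of the $\chi^2=1$ case (even vanishing order via Lemma \ref{even_mult} against the even-order derivatives of paragraph \ref{def_varE}, and the factor $1/2$ in the pair count) is exactly the point the paper handles implicitly through its pair-of-zeros convention, and you do it correctly.
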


\begin{proof}
 By Theorem \ref{toroidal_if_and_only_L-series=0}, we have to consider only zeros of $L$-series $L(\chi,s)$ in order to know whether $E(\chi,s)$ resp.\ its derivatives are toroidal if $q$ is odd. For even $q$ there might be more toroidal derivatives of Eisenstein series, and the following only gives a lower bound for the dimension of $\cA_\tor^K$.

 Fix an idele $a_1\in\AA^\times$ of degree $1$ and let $\omega_1,\dots,\omega_{h}\in\Xi_0$ be the characters that are trivial on $\langle a_1\rangle$. Assume that $\omega_1$ is the trivial character. Then for every $\chi\in\Xi_0$, there is a unique $j\in\{1,\dotsc,h\}$ and $s\in\rquot{\CC}{(2\pi\i/\ln q)\ZZ}$ such that $\chi=\omega_j\norm \ ^s$. By class field theory, there is a finite abelian unramified extension $F'/F$ of order $h$ (cf.\ paragraph \ref{product_of_all_L_series}) such that 
 $$ \prod_{i=1}^{h}\ L_F(\omega_i,s+1/2) \ = \ \zeta_{F'}(s+1/2) \;. $$
 In particular the zeros of both hand sides as functions of $s$ are in one-to-one correspondence.

 By Weil's proof of the Riemann hypothesis for function fields, we know that this zeta function is of the form
 $$ \zeta_{F'}(s) = \frac{\fL_{F'}(q^{-s})}{(1-q^{-s})(1-q^{1-s})} $$
 for some polynomial $\fL_{F'}(T)\in\ZZ[T]$ of degree $2g_{F'}$ that has no zero at $T=1$ or $T=q^{-1}$ (cf.\ \cite[Thms. VII.4 and VII.6]{Weil}). This means that the orders of all pairs of zeros of $L(\blanc,1/2)$ sum up to $g_{F'}$, and that we find $g_{F'}$ linearly independent toroidal automorphic forms in $\cE^K$. Note that for a quasi-character $\chi=\omega_i\norm \ ^{s}$ with $\chi^2=\norm \ ^{\pm1}$, we have that $\zeta_{F'}(s+1/2)\neq0$ because $\fL(T)$ has no zero at $T=q^0$ or $T=q^{-1}$. Hence if $\chi$ is a zero of $L(\blanc,1/2)$, then $\varE(\blanc,\chi)$ is not a residuum.
  
 Finally, we apply Hurwitz' theorem (\cite[Cor.\ 2.4]{Hartshorne}) to the unramified extension $F'/F$ and obtain:
 $$ 2\,g_{F'} \, - \, 2 \ = \ h \, (\,2\,g \, - \, 2 \,)  \hspace{0,5cm}\text{and thus}\hspace{0,5cm} g_{F'} \ = \ (\,g \, - \, 1\,) \, h \, + \, 1 \;. \mbox{\qedhere} $$
\end{proof}

\begin{rem}
 Waldspurger calculated toroidal integrals of cusp forms over number fields. So assume for a moment that $F$ is a number field,
 $\pi$ an irreducible unramified cuspidal representation and $f\in\pi$ an unramified cusp form. 
 Let $L(\pi,s)$ be the $L$-function of $\pi$.
 Let $T\subset G$ be a torus corresponding to a separable quadratic field extension $E$ of $F$ and $\chi_T$ the character corresponding to $T$
 by class field theory. Then the square of the absolute value of
 $$ \int\limits_{\lquot{T_FZ_\AA}{T_\AA}} f(t)\,dt $$
 equals a harmless factor times $L(\pi,1/2)L(\pi\chi_T,1/2)$, cf.\ \cite[Prop.\ 7]{Waldspurger2}.
 
 These integrals are nowadays called Waldspurger periods of $f$, 
 and it is translated in some cases to global function fields, cf.\ \cite{Lysenko}. 
 This leads to the conjecture:
\end{rem}

\begin{conj}
 A cusp form $f$ of an irreducible unramified cuspidal subrepresentation $\pi$ of the space of automorphic forms 
 is toroidal if and only if $L(\pi,1/2)=0$.
\end{conj}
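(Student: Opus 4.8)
The plan is to reduce toroidality of $f$ to the vanishing of period functionals on the whole representation $\pi$, and then to compute those functionals in terms of central $L$-values --- via the Hecke zeta integral for $\GL(2)$ for the split torus, and via the function field analogue of Waldspurger's period formula for the non-split tori. First I would note that, $\pi$ being irreducible and $f\neq0$, the $G_\AA$-translates of $f$ span $\pi$, and that for a maximal torus $T$ attached to $E$ and a cusp form $h$ the functional $\ell_T(h):=\int_{\lquot{T_FZ_\AA}{T_\AA}}h(t)\,dt$ is honest integration, because for split $T$ the constant-term corrections of paragraph \ref{def_toroidal_integral} drop out for the cuspidal $h$. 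Since $f_T(g)=\ell_T(R_gf)$, the form $f$ is $E$-toroidal precisely when $\ell_T$ annihilates every translate of $f$, i.e.\ when $\ell_T\equiv0$ on $\pi$. So toroidality of $f$ depends only on $\pi$ and amounts to $\ell_T\equiv0$ on $\pi$ for all separable quadratic algebra extensions $E/F$.

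I would then treat the split torus $E=F\oplus F$ on its own. Here $\ell_T(h)=\int_{\lquot{F^\times}{\AA^\times}}h\tinymat t {} {} 1 \,d^\times t$, which after unfolding the cuspidal Whittaker expansion of $h$ is the global Hecke zeta integral of $h$ at the central point (the untwisted integral lands there because the adelic weight is shifted by $1/2$ against the classical one). By the zeta-integral theory for $\GL(2)$ (cf.\ \cite{Bump}) it equals $L(\pi,1/2)$ times a product of local zeta integrals, each of which is never identically zero on $\pi_x$ and which equals $1$ on the spherical vector since $\pi$ is everywhere unramified. Hence $\ell_T\not\equiv0$ on $\pi$ iff $L(\pi,1/2)\neq0$. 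This already gives one implication: a toroidal $f$ is in particular $F\oplus F$-toroidal, so $L(\pi,1/2)=0$.

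For a non-split torus $T$, with associated quadratic character $\chi_T=\chi_E$ and base change $L$-function $L(\pi,s)L(\pi\chi_T,s)$ (paragraph \ref{def_chi_T}), I would invoke a Waldspurger-type identity over $F$ of the shape $\norm{\ell_T(h)}^2=c_T(h)\,L(\pi,1/2)\,L(\pi\chi_T,1/2)$, with $c_T(h)\geq0$ a product of local integrals (this is \cite[Prop.\ 7]{Waldspurger2} over number fields, and is known at least in some cases over function fields, cf.\ \cite{Lysenko}). If $L(\pi,1/2)=0$, the right-hand side vanishes, forcing $\ell_T(h)=0$ for all $h\in\pi$, so $f$ is $E$-toroidal for every quadratic field extension $E/F$; with the split case this makes $f$ toroidal, the remaining implication. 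Unlike in the Eisenstein case, no non-vanishing result for quadratic twists (Theorem \ref{thm_non-vanish}) is needed here, since the non-split period is controlled by the \emph{product} $L(\pi,1/2)L(\pi\chi_T,1/2)$, which already vanishes once $L(\pi,1/2)$ does.

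The hard part will be the input to the previous paragraph. A Waldspurger period formula valid for all everywhere-unramified cuspidal $\pi$ and all separable quadratic field extensions $E/F$, in exactly the normalisation of the toroidal integral used in this paper, does not seem to be on record over function fields in full generality --- \cite{Lysenko} only treats special situations --- which is why the statement can presently be offered only as a conjecture. A subsidiary point, needed so that the identity is governed purely by the global $L$-value, is that the local $T(F_x)$-invariant functionals on $\pi_x$ be one-dimensional and nonzero at every place $x$; this is automatic in the present setting, since at the inert and ramified places of $E$ it follows from the Saito--Tunnell theorem ($\pi_x$ being a spherical principal series, hence not a discrete series, so no local root-number obstruction occurs), while at the split places it is elementary. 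Granting such a function field Waldspurger formula, the argument above closes, and one could then even strengthen the conclusion to $\pi\subseteq\cA_\tor$, as $\cA_\tor$ is a $G_\AA$-submodule of $\cA$.
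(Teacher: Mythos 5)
You should be aware that the paper itself does not prove this statement: it appears there as a conjecture, and the remark immediately preceding it supplies exactly the motivation you reproduce, namely Waldspurger's period formula $\norm{\ell_T(h)}^2=(\ast)\,L(\pi,1/2)\,L(\pi\chi_T,1/2)$ over number fields (\cite[Prop.\ 7]{Waldspurger2}) together with its partial function-field analogues in \cite{Lysenko}. So your proposal cannot be, and honestly does not claim to be, a proof: the entire non-split (``if'') direction is conditional on a function-field Waldspurger formula, valid for every separable quadratic field extension $E/F$ and in the normalisation of the toroidal integral used here, and the absence of that ingredient is precisely why the paper states the result as a conjecture. Within those limits your reductions are sound: since $\pi$ is irreducible, toroidality of $f$ is equivalent to the vanishing of the period functional $\ell_T$ on all of $\pi$; for cusp forms the constant-term corrections in the split toroidal integral of paragraph \ref{def_toroidal_integral} vanish, so the split period is the Hecke--Jacquet--Langlands zeta integral at the central point, and unfolding against the Whittaker expansion shows unconditionally that a toroidal cusp form has $L(\pi,1/2)=0$. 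That unconditional ``only if'' half goes beyond what the paper records and is worth isolating as a statement in its own right; just be careful with the local normalisation --- the unnormalised spherical local zeta integral equals the local factor $L(\pi_x,1/2)$, not $1$, so what equals $1$ on spherical data is the quotient by the local $L$-factor. The converse half stands or falls with the conjectural period formula, exactly as in the paper.
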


 By the multiplicity one theorem, this conjecture implies
 
\begin{conj}
 \label{conj_toroidal_cusp_forms}
 The dimension of $\cA_{0,\tor}^K$ equals the number of isomorphism classes of irreducible unramified cuspidal representations $\pi$
 with $L(\pi,1/2)=0$.
\end{conj}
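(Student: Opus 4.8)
The plan is to deduce Conjecture \ref{conj_toroidal_cusp_forms} from the preceding conjecture together with the standard structure theory of the cuspidal spectrum for $\GL(2)$. The starting point is that $\cA_0$ is a semisimple $G_\AA$-module: by the multiplicity one theorem it decomposes as a direct sum $\bigoplus_\pi\pi$ over the isomorphism classes of irreducible cuspidal automorphic representations, each occurring once. Passing to $K$-invariants, $\cA_0^K=\bigoplus_{\pi}\pi^K$, and for an unramified $\pi$ the local theory of spherical representations gives $\dim_\CC\pi^K=\prod_x\dim_\CC\pi_x^{K_x}=1$, while $\pi^K=0$ otherwise. Thus $\dim_\CC\cA_0^K$ is the number of isomorphism classes of irreducible unramified cuspidal representations, and the task reduces to identifying which of these $\pi$ contribute to $\cA_{0,\tor}^K$.

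Next I would observe that $\cA_\tor$ is a $G_\AA$-submodule of $\cA$ --- it is $\cH$-stable by the alternative description in paragraph \ref{toroidal_form_def_by_H}, and $\cH$-submodules of $\cA$ are $G_\AA$-submodules --- and hence so is $\cA_{0,\tor}=\cA_0\cap\cA_\tor$. Since $\cA_0$ is semisimple, $\cA_{0,\tor}=\bigoplus_{\pi\in\Sigma}\pi$ for the set $\Sigma$ of those irreducible cuspidal $\pi$ that are contained in $\cA_\tor$. Because each $\pi$ is irreducible, it lies in $\Sigma$ as soon as it contains a single nonzero toroidal vector: if $f\in\pi$ is nonzero with $f_T(g)=0$ for all maximal tori $T$ and all $g\in G_\AA$, then $f$ exhibits a nonzero submodule of the irreducible module $\pi$ inside $\cA_\tor(E)$, forcing $\pi\subseteq\cA_\tor(E)$ for every separable quadratic algebra extension $E$, and therefore $\pi\subseteq\cA_\tor$.

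Specializing to unramified $\pi$, the one-dimensional space $\pi^K$ is spanned by an unramified cusp form $f$, and the preceding conjecture asserts exactly that $f$ is toroidal if and only if $L(\pi,1/2)=0$. Combined with the previous step this gives, for unramified $\pi$, the equivalence $\pi\subseteq\cA_\tor\iff L(\pi,1/2)=0$. Consequently
$$ \cA_{0,\tor}^K \ = \!\!\! \bigoplus_{\substack{\pi\ \text{irred.\ unram.\ cuspidal}\\ L(\pi,1/2)=0}} \!\!\! \pi^K \;, $$
a direct sum of one-dimensional spaces indexed precisely by the isomorphism classes named in the statement, so that $\dim_\CC\cA_{0,\tor}^K$ equals that number, as claimed.

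The only real obstacle is that the argument is conditional on the preceding conjecture, i.e.\ on a Waldspurger-type period formula identifying the toroidal period of an unramified cusp form with (a harmless factor times) $L(\pi,1/2)L(\pi\chi_T,1/2)$ for the non-split tori $T$, together with the Hecke-type integral representation showing that the split-torus period detects $L(\pi,1/2)$ itself; such a formula is a theorem over number fields (cf.\ \cite[Prop.\ 7]{Waldspurger2}) and is available over global function fields only in special cases (cf.\ \cite{Lysenko}). A minor point, already subsumed in the preceding conjecture but worth recording, is that toroidality in the sense of Definition \ref{def_toroidal_forms} --- the vanishing of $f_T(g)$ for \emph{all} $g\in G_\AA$ and \emph{all} maximal tori $T$, including the split one --- is, for an irreducible cuspidal $\pi$, equivalent to the vanishing of a single period of one unramified vector; this is where $\cH$-translation (paragraph \ref{toroidal_form_def_by_H}) and irreducibility of $\pi$ enter, together with the fact that for cusp forms $f_N=f_{N^T}=0$, so that the split-torus integral is an honest period as well.
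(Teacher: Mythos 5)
Your argument is correct and is essentially the paper's own route: the paper deduces this statement from the preceding conjecture with the single phrase ``by the multiplicity one theorem'', and you have simply spelled out that implication via $\cA_0^K=\bigoplus_\pi\pi^K$ with $\dim\pi^K=1$ for unramified $\pi$, together with the observation that an irreducible cuspidal $\pi$ lies in $\cA_\tor$ as soon as one nonzero vector is toroidal. As you note, the derivation remains conditional on the Waldspurger-type period conjecture, which is exactly why the statement is stated as a conjecture in the paper rather than a theorem.
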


\begin{rem}
 In \cite{Lorscheid3} one can find a proof of that $\cA_{0,\tor}^K=\{0\}$ if $g=1$ by a different method. Note that in this case, $L(\pi,s)$ has no zero for any irreducible unramified cuspidal representation.
\end{rem}

\newpage

\bibliographystyle{plain}

\end{document}